\documentclass[a4paper,oneside]{amsart}
\usepackage[utf8]{inputenc}
\usepackage[left=3cm,right=3cm,top=3cm,bottom=3cm]{geometry}
\usepackage[OT1]{fontenc}
\usepackage{amsmath}
\usepackage{amsfonts}
\usepackage{amssymb}
\usepackage{amsthm}
\usepackage{xypic}
\usepackage[pagebackref]{hyperref}

\DeclareMathOperator{\supp}{supp}
\DeclareMathOperator{\Hom}{Hom}
\DeclareMathOperator{\Tor}{Tor}
\DeclareMathOperator{\Ext}{Ext}
\DeclareMathOperator{\Ker}{Ker}

\DeclareMathOperator{\Img}{Im}
\DeclareMathOperator{\sk}{sk}
\DeclareMathOperator{\hdim}{hdim}
\DeclareMathOperator{\cdim}{cdim}
\DeclareMathOperator{\cd}{cd}
\DeclareMathOperator{\vcd}{vcd}
\DeclareMathOperator{\cat}{cat}
\DeclareMathOperator{\lk}{lk}

\DeclareMathOperator{\chr}{char}

\def\pt{{\mathrm{pt}}}
\def\id{{\mathrm{id}}}

\def\B{\mathrm{B}}
\def\oB{\overline{\B}}
\def\CC{\mathbb{C}}
\def\ZZ{\mathbb{Z}}
\def\FF{\mathbb{F}}
\def\MF{\mathrm{MF}}
\def\QQ{\mathbb{Q}}
\def\RR{\mathbb{R}}

\def\Zg{\ZZ_{\geq 0}}
\def\Zl{\ZZ_{\leq 0}}
\def\Zm{\Zg^m}
\def\RC{\mathrm{RC}}
\def\R{\mathcal{R}}
\def\Z{\mathcal{Z}}
\def\L{\mathcal{L}}
\def\K{\mathcal{K}}
\def\Kf{\K^\mathrm{f}}
\def\ZK{\Z_{\K}}

\def\RK{\R_{\K}}
\def\RCK{\RC_\K}
\def\OZK{\Omega\ZK}
\def\DJ{(\CC\mathrm{P}^\infty)^\K}
\def\ODJ{\Omega\DJ}
\def\k{\mathbf{k}}
\def\H{{\widetilde{H}}}
\def\Lm{\Lambda[m]}
\def\kK{\k[\K]}
\def\kKc{\k\langle\K\rangle}

\newtheorem{thm}{Theorem}[section]
\newtheorem{lmm}[thm]{Lemma}
\newtheorem{prp}[thm]{Proposition}
\newtheorem{crl}[thm]{Corollary}

\theoremstyle{definition}

\newtheorem{dfn}[thm]{Definition}
\newtheorem{rmk}[thm]{Remark}
\newtheorem{exm}[thm]{Example}
\newtheorem{prb}[thm]{Problem}

\numberwithin{equation}{section}

\hypersetup{
    colorlinks=true,
    linkcolor=blue,
    citecolor=blue,
    urlcolor=blue,
    pdftitle={Pontryagin algebras and the LS-category of moment-angle complexes in the flag case},
    pdfauthor={Fedor E. Vylegzhanin},
}

\begin{document}
\author{Fedor E. Vylegzhanin}
\address{\parbox{\linewidth}{
Faculty of Mechanics and Mathematics, Lomonosov Moscow State University, Moscor, 119991 Russia\\
National Research University Higher School of Economics,  Pokrovskii bul. 11, Moscow, 109028 Russia}}
\email{vylegf@gmail.com}
\title[Pontryagin algebras and the LS-category]{Pontryagin algebras and the LS-category\\of moment-angle complexes in the flag case}

\subjclass[2020]{57S12, 55M30, 55U10; 57T35, 13F55, 05E40, 16S37}

\begin{abstract}
For any flag simplicial complex $\K,$ we describe the multigraded Poincar\'e series, the minimal number of relations, and the degrees of these relations in
the Pontryagin algebra of the corresponding moment-angle complex $\ZK.$ 
We compute the LS-category of $\ZK$ for flag complexes and give a lower bound in the general case. The key observation is that the Milnor-Moore spectral sequence collapses at the second sheet for flag $\K.$

We also show that the results of Panov and Ray about the Pontryagin algebras of Davis-Januszkiewicz spaces are valid for arbitrary coefficient rings, and introduce the $(\ZZ\times\Zm)$-grading on the Pontryagin algebras which is similar to the multigrading on the cohomology of $\ZK.$
\end{abstract}

\maketitle
\section{Introduction}
Let $\K$ be a simplicial complex on the vertex set $[m]=\{1,\dots,m\}.$ We consider the following homotopy invariants of the \emph{moment-angle complex} $\ZK$ associated with $\K:$
\begin{itemize}
\item The \emph{Pontryagin algebra} $H_*(\OZK;\k),$ where $\k$ is a commutative ring with unit;
\item The \emph{Lusternik-Schnirelmann category} (LS-category) $\cat(\ZK)$ of $\ZK.$
\end{itemize}

The Pontryagin algebras of moment-angle complexes were studied in \cite{pr,gptw,onerelator}. If $\k$ is a field and $\K$ is a flag complex, a minimal set of $\sum_{J\subset[m]}\dim\H_0(\K_J)$ generators for $H_*(\OZK;\k)$ is known (see \cite[Theorem 4.3]{gptw}). However, it seems difficult to explicitly describe the relations between them. We solve a simpler problem of describing the \emph{minimal number} of defining relations and their \emph{degrees} by computing $\Tor_2^{H_*(\OZK;\k)}(\k,\k).$
(Assertion 2 of Proposition \ref{prp:homological} below shows why this $\k$-module is relevant.)

Beben and Grbi\'c \cite{beben_grbic} gave various lower and upper bounds on the LS-category of moment-angle complexes, focusing on the case where $|\K|$ is a low-dimensional manifold (but $\K$ can be non-flag). We compute $\cat(\ZK)$ for all flag complexes $\K$ and give a new lower bound in the general case. These results also use our calculation of $\Tor^{H_*(\OZK;\k)}(\k,\k)$ for flag complexes.

For any simplicial complex $\K,$ there is an embedding of $H_*(\OZK;\k)$ into the Pontryagin algebra $H_*(\ODJ;\k)$ of the \emph{Davis-Januszkiewicz space} $\DJ.$
We introduce the $(\ZZ\times\Zm)$-grading on these algebras, and show that Panov and Ray's theorem on the structure of $H_*(\ODJ;\k)$ is true for any commutative ring $\k$ with unit (for the case of integer or field coefficients, see \cite[Sect. 8.4]{ToricTopology}).
\begin{thm}
\label{thm:hodj_description}
Consider the $(\ZZ\times\Zm)$-graded $\k$-algebra
$$\kK^!:=T(u_1,\dots,u_m)/(u_i^2=0,~i=1,\dots,m;~u_iu_j+u_ju_i=0,~\{i,j\}\in\K),~\deg u_i=(-1,2e_i).$$ Then
\begin{enumerate}
    \item[(a)] there is an isomorphism $H_*(\ODJ;\k)\cong\Ext_{\kK}(\k,\k)$ of $\ZZ$-graded algebras, where $\kK$ is the Stanley-Reisner algebra; in particular, for any $n\geq 0$ we have
    $$H_n(\ODJ;\k)\cong\bigoplus_{-i+2|\alpha|=n}\Ext^i_{\kK}(\k,\k)_{2\alpha},\quad i\geq 0,~\alpha\in\Zm;$$
    \item[(b)] there is a natural inclusion $\kK^!\hookrightarrow H_*(\ODJ;\k);$
    \item[(c)] if $\K$ is a flag complex, then $\kK^!\cong H_*(\ODJ;\k).$
\end{enumerate}
\end{thm}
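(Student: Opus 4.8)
The plan is to prove (a) by an explicit chain‑level computation and then obtain (b) and (c) by Koszul‑duality bookkeeping, with (c) following formally from (a), (b), and the Koszulness of $\kK$ in the flag case.

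For (a): give $\CC\mathrm{P}^\infty$ its standard CW structure with one cell in each even dimension; the usual cellular diagonal approximation turns the cellular chain coalgebra $C_*^{\mathrm{cell}}(\CC\mathrm{P}^\infty;\k)$ into the graded dual coalgebra $(\k[v])^\vee$, with zero differential. Passing to finite products and then to the colimit over the faces of $\K$ --- the polyhedral product $\DJ\subset(\CC\mathrm{P}^\infty)^m$ being a CW subcomplex to which the product diagonal restricts --- gives a canonical isomorphism of DG coalgebras $C_*^{\mathrm{cell}}(\DJ;\k)\cong\bigl((\kK)^\vee,0\bigr)$, where $\kK$ is $\k$‑free of finite type so that the dual is harmless; dually this recovers the cellular cochain algebra $C^*_{\mathrm{cell}}(\DJ;\k)=(\kK,0)$. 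Since $\DJ$ is simply connected, Adams' cobar theorem computes $H_*(\ODJ;\k)$ from its chain coalgebra, and homotopy‑invariance of the cobar construction on termwise‑free simply‑connected DG coalgebras lets us use this small model, so $H_*(\ODJ;\k)\cong H_*\bigl(\mathbf{Cobar}((\kK)^\vee)\bigr)$ as algebras. Finally, $\mathbf{Cobar}((\kK)^\vee)$ is termwise $\Hom_\k\bigl((\overline{\kK})^{\otimes\bullet},\k\bigr)=\Hom_{\kK}\bigl(\mathbf{Bar}(\kK),\k\bigr)$ with transposed differential, so its cohomology with the cobar product is $\Ext_{\kK}(\k,\k)$ with the Yoneda product; this step uses only freeness and finite type, not that $\k$ is a field. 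Sorting the cells of $\DJ$ by total and multidegree produces the $\ZZ$‑ and $(\ZZ\times\Zm)$‑gradings: a generator of multidegree $2\alpha$ lands in cobar word‑length equal to its cohomological degree, so the class $u_i$ dual to the $2$‑cell of the $i$‑th factor has degree $(-1,2e_i)$, as claimed.

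For (b): the subalgebra of $\Ext_{\kK}(\k,\k)$ generated by $\Ext^1=\langle u_1,\dots,u_m\rangle$ is, by the standard structure theory of Yoneda algebras, the quadratic dual of the quadratic cover of $\kK$; here the quadratic cover is $\k[\Kf]$ and its quadratic dual is $T(u_1,\dots,u_m)/(u_i^2,\,u_iu_j+u_ju_i:\{i,j\}\in\K)=\kK^!$. Concretely, the Yoneda square $\Ext^1\otimes\Ext^1\to\Ext^2$ is, in the relevant internal degree, the $\k$‑dual of the inclusion of the space of quadratic relations into $V^{\otimes2}$ with $V=\langle v_1,\dots,v_m\rangle$; since the relations are monomials, resp. commutators, in a monomial basis, that space is a direct summand, so its $\k$‑dual is surjective with kernel the orthogonal complement, which is the relation space of $\kK^!$. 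Hence $\kK^!\hookrightarrow\Ext_{\kK}(\k,\k)\cong H_*(\ODJ;\k)$, multigraded and over any $\k$; geometrically the inclusion is induced by the evident map into $\ODJ$ from the amalgam of the tori $\Omega BT^\sigma$, $\sigma\in\K$.

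For (c): if $\K$ is flag then all minimal non‑faces have two elements, so $I_\K$ is generated in degree $2$ and $\kK$ is quadratic; moreover the monomial generators $v_iv_j$ ($\{i,j\}\notin\K$) form a Gröbner basis for any term order, so $\kK$ is $G$‑quadratic and hence Koszul over every commutative ring (the ring‑independent content of Fröberg's theorem). For a Koszul algebra the Yoneda algebra is generated in cohomological degree $1$ and coincides with the quadratic dual, so (a) and (b) give $\kK^!\cong H_*(\ODJ;\k)$, with matching gradings since the Koszul part of $\Ext$ sits in word‑length equal to internal degree. The main obstacle throughout is the first step of (a): producing a chain‑level coalgebra model of $\DJ$ that is simultaneously explicit, termwise $\k$‑free, and (quasi‑)isomorphic as a DG coalgebra to $((\kK)^\vee,0)$ over an arbitrary ring, and justifying the cobar computation of loop homology in that generality --- in particular, checking that the cellular diagonal on $\DJ$ is coassociative (or coherently so) and compatible with the multigrading. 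Over a field or over $\ZZ$ this is exactly the Panov--Ray computation; the content of the theorem is that no step needs more than freeness and finite type.
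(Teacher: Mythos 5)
Your proposal is correct and follows essentially the same route as the paper: (a) is the Adams cobar computation on the cellular chain coalgebra $\kKc$ (which the paper gets by citing \cite[Proposition 8.4.10]{ToricTopology} together with its bar--cobar lemma), (b) is L\"ofwall's identification of the subalgebra generated by $\Ext^1$ with the quadratic dual of the quadratic cover $\k[\Kf]$ (which the paper re-derives at the cobar level, as the diagonal subalgebra, precisely to keep the multigrading and arbitrary $\k$), and (c) is Koszulness of flag Stanley--Reisner rings, i.e.\ Fr\"oberg's theorem, packaged as ``quadratic Gr\"obner basis implies Koszul'' instead of the explicit Fr\"oberg resolution. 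The differences are of citation versus detail only; note that your degree-two computation in (b) does not by itself give injectivity of $\kK^!\to\Ext_{\kK}(\k,\k)$ --- that is exactly the content of L\"ofwall's theorem, which both you and the paper ultimately rely on and which extends to arbitrary $\k$ because all modules involved are free of finite type.
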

The isomorphism from (a) induces the following $(\ZZ\times\Zm)$-grading on $H_*(\ODJ;\k):$
$$H_n(\ODJ;\k)=\bigoplus_{-i+2|\alpha|=n}H_{-i,2\alpha}(\ODJ;\k),\quad H_{-i,2\alpha}(\ODJ;\k)\cong\Ext^{i}_{\kK}(\k,\k)_{2\alpha}.$$
The inclusion from (b) agrees with this grading.
Note that it is similar to the multigrading on $H^*(\ZK;\k)$ of \cite[Theorem 4.5.7]{ToricTopology}:
$$H^n(\ZK;\k)=\bigoplus_{-i+2|\alpha|=n}H^{-i,2\alpha}(\ZK;\k),\quad H^{-i,2\alpha}(\ZK;\k)\cong\Tor^{\k[v_1,\dots,v_m]}_i(\kK,\k)_{2\alpha}.$$

We also obtain a $(\ZZ\times\Zm)$-grading on $H_*(\OZK;\k),$ since it is a subalgebra of $H_*(\ODJ;\k).$ Using the Fr\"oberg resolution \cite{froberg} for the left $\kK^!$-module $\k$  
and the structure of free left $H_*(\OZK;\k)$-module on $H_*(\ODJ;\k),$
we compute $\Tor_n^{H_*(\OZK;\k)}(\k,\k).$

\begin{thm}
\label{thm:tor_ozk_answer}
Let $\K$  be a flag simplicial complex. Then $$\Tor_n^{H_*(\OZK;\k)}(\k,\k)\cong \bigoplus_{J\subset[m]}\widetilde{H}_{n-1}(\K_J;\k).$$
Here $\K_J=\{I\in\K:~I\subset J\}$ is the full subcomplex of $\K.$
This isomorphism is $(\ZZ\times\Zm)$-graded in the following sense:
$$\Tor_n^{H_*(\OZK;\k)}(\k,\k)_{-|J|,2J}\cong \widetilde{H}_{n-1}(\K_J;\k),\quad J\subset[m],$$
all the other graded components being zero.

(For a subset $J\subset[m],$ we write $J$ instead of $\sum_{j\in J}e_j\in \Zm.$)
\end{thm}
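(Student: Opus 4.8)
The plan is to transfer the computation from $A:=H_*(\OZK;\k)$ to the larger algebra $B:=H_*(\ODJ;\k)$, which for flag $\K$ is $\kK^!$ by Theorem~\ref{thm:hodj_description}(c) and where the Fr\"oberg resolution is available. The input I would import (from \cite{pr,gptw}, or from the $(\ZZ\times\Zm)$-graded description of $H_*(\ODJ;\k)$ established above) is the homotopy-theoretic structure of the inclusion $A\hookrightarrow B$: it realizes $A$ as a normal sub-Hopf-algebra over which $B$ is free as a left module, with quotient Hopf algebra $B/\!/A$ the exterior algebra $\Lm=\Lambda_\k[u_1,\dots,u_m]$, the projection $B\to B/\!/A$ being the algebra map $\kK^!\twoheadrightarrow\Lm$ that imposes the missing relations $u_iu_j+u_ju_i=0$ for $\{i,j\}\notin\K$. (Geometrically this records the loop fibration $\OZK\to\ODJ\to T^m$, obtained by looping $\ZK\to\DJ\to BT^m$, together with its section.)

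Granting this, the first step is a change of rings. Since $B$ is free as a left $A$-module, restriction of scalars along $A\hookrightarrow B$ carries any free left $B$-resolution $F_\bullet\to\k$ to a free left $A$-resolution of $\k$; hence
$$\Tor^A_n(\k,\k)\cong H_n(\k\otimes_A F_\bullet)\cong H_n\bigl((\k\otimes_A B)\otimes_B F_\bullet\bigr)\cong\Tor^B_n(\Lm,\k),$$
where $\Lm=\k\otimes_A B=B/\!/A$ is regarded as a right $\kK^!$-module via $\kK^!\twoheadrightarrow\Lm$, and the isomorphism is $(\ZZ\times\Zm)$-graded. For $F_\bullet$ I would take the Fr\"oberg resolution \cite{froberg} of the left $\kK^!$-module $\k$; since $\K$ is flag, $\kK$ is Koszul, so this is the minimal resolution, of the form $F_n=\kK^!\otimes_\k(\kK_n)^\vee$ with the Koszul differential ($\kK_n$ being the degree-$n$ component of the Stanley-Reisner algebra). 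Tensoring down, $\Lm\otimes_{\kK^!}F_\bullet$ becomes an explicit complex $\bigl(\Lm\otimes_\k(\kK_\bullet)^\vee,\delta\bigr)$ of finite type, and it remains to compute its homology in each $(\ZZ\times\Zm)$-degree.

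Here is the second, and main, step. Because $\Lm$ is concentrated in squarefree multidegrees $2J'$ — each component one-dimensional, spanned by $u_{J'}$ of degree $(-|J'|,2J')$ — and because $(\kK_n)^\vee_{2\alpha}\ne 0$ forces $|\alpha|=n$, the contribution to the multidegree-$2J$ part of $\Lm\otimes(\kK_n)^\vee$ (for squarefree $J$) comes only from terms $u_{J\setminus J''}\otimes(\kK_{|J''|})^\vee_{2J''}$ with $J''\subseteq J$; thus, up to the degree shift by $u_J$, the multidegree-$2J$ part of $\Lm\otimes_{\kK^!}F_\bullet$ is identified with the multidegree-$2J$ part of the (Koszul) complex computing $\Tor^{\k[v_1,\dots,v_m]}(\kK,\k)$. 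In other words $\Lm=\k[v_1,\dots,v_m]^!$ is, as a $\kK^!$-module, Koszul-dual to $\kK$ viewed as a $\k[v_1,\dots,v_m]$-module, so $\Tor^{\kK^!}(\Lm,\k)$ agrees with $\Tor^{\k[v_1,\dots,v_m]}(\kK,\k)$ after the Koszul-duality regrading. Tracking shifts — the factor $u_J$ contributes $\ZZ$-degree $-|J|$, while homological degree $n$ on the $\kK^!$-side matches homological degree $|J|-n$ on the $\k[v_1,\dots,v_m]$-side — this yields
$$\Tor^A_n(\k,\k)_{-|J|,2J}\cong\Tor^{\k[v_1,\dots,v_m]}_{|J|-n}(\kK,\k)_{2J},$$
with all other multigraded components of $\Tor^A_n(\k,\k)$ vanishing. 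Hochster's formula $\Tor^{\k[v_1,\dots,v_m]}_i(\kK,\k)_{2J}\cong\H_{|J|-i-1}(\K_J;\k)$ (cf. \cite[Theorem~4.5.7]{ToricTopology}) then turns the right-hand side into $\H_{n-1}(\K_J;\k)$, which is the assertion.

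The change-of-rings reductions are routine once the normality and left-freeness of $A\subseteq B$ are on the table, and the appeal to Hochster's formula is standard. The hard part will be the Koszul-duality identification in the third paragraph: promoting ``$\Lm$ is Koszul-dual to $\kK$ as a module'' to an honest, multidegree-tracking isomorphism between $\Lm\otimes_{\kK^!}F_\bullet$ and the complex computing $\Tor^{\k[v_1,\dots,v_m]}(\kK,\k)$, and in particular verifying that all non-squarefree multigraded pieces are acyclic, so that $\Tor^A(\k,\k)$ is supported precisely on the degrees $(-|J|,2J)$. An alternative to the Koszul-duality language is to grind directly through the Fr\"oberg resolution, identifying the multidegree-$2J$ complex with the reduced augmented simplicial chain complex of $\K_J$; the combinatorics there is where the flag hypothesis really enters.
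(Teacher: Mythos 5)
Your overall strategy coincides with the paper's: exploit the freeness of $B=H_*(\ODJ;\k)$ over $A=H_*(\OZK;\k)$, feed in the Fr\"oberg resolution of $\k$ over $\kK^!,$ and recognize the tensored-down complex as a Koszul-type complex that splits over full subcomplexes. Your change-of-rings step $\Tor^A_n(\k,\k)\cong\Tor^B_n(\Lm,\k)$ with $\Lm\cong\k\otimes_A B$ is sound and is just a repackaging of what the paper does via the isomorphism $T$ of Proposition \ref{prp:left_module_isomorphism} and Theorem \ref{thm:ozk_resolution}: the identification of the induced right action with the one through $p_*$ is exactly the commutative square \eqref{eqn:T_commutative}, and the resulting complex $\bigl(\Lm\otimes\kKc_{(\bullet)},\overline{d}\bigr)$ is the paper's Proposition \ref{prp:ozk_tor_calculation}.

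The genuine gap is a dualization error in your main step. The differential of $\Lm\otimes_{\kK^!}F_\bullet$ sends $u_I\otimes\chi_\alpha$ to $\sum_j (u_I\wedge u_j)\otimes\chi_{\alpha-e_j}$, i.e.\ it \emph{deletes} a vertex from the $\chi$-part, whereas the Koszul complex $\kK\otimes\Lambda[u_1,\dots,u_m]$ computing $\Tor^{\k[v_1,\dots,v_m]}(\kK,\k)$ multiplies by $v_j$, i.e.\ \emph{adds} a vertex. So your complex is the $\k$-dual of that Koszul complex, and its multidegree-$2J$ piece is the reduced simplicial \emph{chain} complex of $\K_J$, not the cochain complex. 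Hence the asserted isomorphism $\Tor^A_n(\k,\k)_{-|J|,2J}\cong\Tor^{\k[v_1,\dots,v_m]}_{|J|-n}(\kK,\k)_{2J}$ fails over a general coefficient ring: Hochster's formula, correctly stated, gives $\widetilde{H}^{\,|J|-i-1}(\K_J;\k)$ (reduced cohomology, cf.\ \cite[Theorems 4.5.7, 4.5.8]{ToricTopology}), so your chain of isomorphisms would output $\widetilde{H}^{\,n-1}(\K_J;\k)$ instead of $\widetilde{H}_{n-1}(\K_J;\k)$. For a flag triangulation $\K$ of $\RR\mathrm{P}^2$ with $\k=\ZZ$, $J=[m]$, $n=2$, the theorem gives $\widetilde{H}_1(\K;\ZZ)=\ZZ/2$ (the integral relation noted after Corollary \ref{crl:number_of_gen_and_rel}), while your right-hand side is $\widetilde{H}^1(\K;\ZZ)=0$. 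Your homological rendering of Hochster's formula is incorrect over $\ZZ$ for the same reason; the two slips cancel over a field (equal dimensions) but not as intermediate statements over general $\k$. The repair is precisely your ``alternative'': identify the multidegree-$2J$ piece of $(\Lm\otimes\kKc,\overline d)$ directly with the reduced chain complex of $\K_J$ — this is what the paper does, viewing the complex as the $\k$-dual of the Koszul algebra and invoking \cite[Lemma 3.2.6, Theorem 3.2.9]{ToricTopology} before dualizing. One more small point: the flag hypothesis is spent earlier (in $H_*(\ODJ;\k)\cong\kK^!$ and in the applicability of the Fr\"oberg resolution), not in this last combinatorial identification, which holds for arbitrary $\K$.
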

Using this result, in Corollary \ref{crl:number_of_gen_and_rel} we compute the number of generators and relations in the algebra $H_*(\OZK;\k)$ for field coefficients, and give a lower bound in the case where the coefficient ring is a principal ideal domain (PID). This implies the available criteria for $H_*(\OZK;\FF)$ to be free or be a one-relator algebra (see \cite{gptw,onerelator}).

The \emph{Poincar\'e series} of $H_*(\OZK;\FF)$ in the flag case was calculated by Panov and Ray \cite{pr}. We refine this calculation in Theorem \ref{thm:poincare_series} by introducing the multigrading.

By the Milnor-Moore theorem~\cite{milnor_moore}, the algebra $H_*(\OZK;\QQ)$ is the universal enveloping of $\pi_*(\OZK)\otimes\QQ.$ Hence the rational homotopy groups of $\ZK$ are $(\ZZ\times\Zm)$-graded.
We obtain a multigraded refinement of \cite[Theorem~4.2.1]{denham_suciu} by calculating the multigraded Poincar\'e series of $\pi_*(\OZK)\otimes\QQ$ (see Theorem \ref{thm:homotopy_ranks}). The non-negativity of the graded components of these series gives certain restrictions on the Euler characteristics of full subcomplexes of $\K$ in the flag case. These restrictions (see Example \ref{exm:restriction_on_chi}) can be of independent combinatorial interest (see \cite{ustinovskiy}).

~

As another application of Theorem~\ref{thm:tor_ozk_answer}, we prove in Proposition \ref{prp:mm_collapse} that the \emph{Milnor-Moore spectral sequence}
$$E^2_{p,q}\cong\Tor^{H_*(\Omega X;\FF)}_p(\FF,\FF)_q\Rightarrow H_{p+q}(X;\FF)$$
collapses at the second page for $X=\ZK$ in the flag case. This allows us to compute the \emph{Toomer invariant} \cite{toomer} of such $\ZK,$ which gives a lower bound on the LS-category $\cat(\ZK)$ (see Proposition \ref{prp:cat_zk_lower_bound_flag}). Note that the simpler lower bound of $\cat(X)$ by the \emph{cup-length} is not always strict for $\ZK,$ as shown in \cite[Sect. 5]{beben_grbic}. However, we expect it to be strict for flag complexes (see Problem \ref{prb:cup_for_flag}).

To give an upper bound, we observe that in the flag case the \emph{real moment-angle complex} $\RK$ is the classifying space for the commutator subgroup $\RCK'$ of the \emph{right-angled Coxeter group} $\RCK$ associated with $\K$ (see \cite{pv}).
Using the inequality $\cat(\ZK)\leq\cat(\RK)$ by Beben and Grbi\'c \cite{beben_grbic} and Dranishnikov's formula \cite{dranishnikov} for the virtual cohomological dimension of $\RCK,$ we compute $\cat(\ZK)$ for every flag $\K.$
\begin{thm}
\label{thm:cat_final}
Let $\K$ be a flag simplicial complex. Then $$\cat(\ZK)=\cat(\RK)= 1+\max_{J\subset[m]}\cdim_\ZZ \K_J=1+\max_{I\in\K}\cdim_\ZZ\lk_\K I.$$
\end{thm}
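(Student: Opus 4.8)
The plan is to establish the chain of (in)equalities
$$\cat(\ZK)\leq\cat(\RK)\leq \vcd\RCK = 1+\max_{J\subset[m]}\cdim_\ZZ\K_J \leq \cat(\ZK),$$
and observe that the final combinatorial identity $\max_J\cdim_\ZZ\K_J=\max_{I\in\K}\cdim_\ZZ\lk_\K I$ follows because every full subcomplex $\K_J$ is a retract (up to homotopy, via a sequence of link/deletion operations for flag complexes) of a link, and conversely every link $\lk_\K I$ is homotopy equivalent to a full subcomplex. Since all four middle quantities sit between $\cat(\ZK)$ and $\cat(\ZK)$, they are forced to coincide.

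\emph{Upper bound.} First I would invoke the Beben--Grbi\'c inequality $\cat(\ZK)\leq\cat(\RK)$ from \cite{beben_grbic}. Next, the key geometric input: in the flag case $\RK$ is a $K(\RCK',1)$, since $\RCK'$ acts freely on the universal cover of $\RK$ and that cover is the Davis complex (or the relevant CAT(0) cube complex), which is contractible for flag $\K$ by Gromov's criterion — this is the content of \cite{pv}. Hence $\cat(\RK)=\cat(K(\RCK',1))$. For an aspherical complex one always has $\cat\leq\dim$, and more precisely for groups $\cat(K(G,1))\leq\cd_\ZZ G + 1$ when $G$ has finite cohomological dimension (Eilenberg--Ganea type bounds; here one can also just use that $\RK$ is a finite complex of dimension $m$ but we want the sharp bound). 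Since $\RCK'$ has finite index in $\RCK$, we have $\cd_\ZZ\RCK' = \vcd\RCK$, and Dranishnikov's formula \cite{dranishnikov} gives $\vcd\RCK = \max_{J\subset[m]}\cdim_\ZZ\K_J$. Assembling, $\cat(\ZK)\leq\cat(\RK)\leq 1+\max_{J}\cdim_\ZZ\K_J$.

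\emph{Lower bound.} For this I would use the Toomer-invariant lower bound established earlier: by Proposition \ref{prp:mm_collapse} the Milnor--Moore spectral sequence for $\ZK$ collapses at $E^2$ in the flag case, so the Toomer invariant $e(\ZK)$ equals the largest $p$ with $\Tor_p^{H_*(\OZK;\FF)}(\FF,\FF)\neq 0$, which by Theorem \ref{thm:tor_ozk_answer} is $1+\max_{J}\{\,n: \H_{n-1}(\K_J;\FF)\neq 0\,\}$. Taking $\FF$ to range over all prime fields (and $\QQ$), the maximal such $n$ over all coefficient fields is exactly $\max_J\cdim_\ZZ\K_J$ — here I use that $\cdim_\ZZ X$ of a finite complex is detected by reduced homology with field coefficients, since $\widetilde{H}_k(X;\ZZ)\neq 0$ for the top nonzero degree forces $\widetilde{H}_k(X;\FF)\neq 0$ for some field $\FF$, and conversely. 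Since $e(\ZK)\leq\cat(\ZK)$ (Proposition \ref{prp:cat_zk_lower_bound_flag}), this yields $1+\max_J\cdim_\ZZ\K_J\leq\cat(\ZK)$, closing the loop.

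\emph{The combinatorial identity.} It remains to check $\max_{J\subset[m]}\cdim_\ZZ\K_J=\max_{I\in\K}\cdim_\ZZ\lk_\K I$. The inequality $\geq$ is immediate: taking $I=\varnothing$ gives $\lk_\K\varnothing=\K=\K_{[m]}$, so the right side includes $\cdim_\ZZ\K$; more generally for flag $\K$ each link $\lk_\K I$ is itself the full subcomplex $\K_{J}$ on $J=\lk_\K I\cap[m]$ (the vertices adjacent to all of $I$) — wait, this needs care, so the cleanest route is: for flag $\K$ and $I=\{i_1,\dots,i_k\}\in\K$, $\lk_\K I = \K_{J}$ where $J=\bigcap_{t}\{j: \{j,i_t\}\in\K\}\setminus I$, because flagness means a simplex is in $\lk_\K I$ iff all its vertices are, establishing that links are full subcomplexes, hence $\leq$. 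For $\geq$ one shows any $\K_J$ is a retract of $\K$ up to the quantities involved, or more simply notes $\K_J=\lk_{\K'}(\cdots)$ after coning, but actually the displayed equality in the theorem suggests both maxima are literally equal, so I expect the argument is: $\{$full subcomplexes$\}\supset\{$links$\}$ gives one inequality after flagness, and the other follows since discarding a vertex $j\notin$ some simplex only can be realized by a link when... .

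\emph{Main obstacle.} I expect the main difficulty to be the identification $\cat(\RK)=\vcd\RCK$ rather than merely $\cat(\RK)\leq\dim\RK=m$: one must know that the LS-category of the aspherical complex $\RK$ is detected by the (virtual) cohomological dimension of its fundamental group and not merely bounded by its geometric dimension. This is where \cite{dranishnikov} and the Eilenberg--Ganea circle of ideas enter, and where one must be careful that $\RK$ (a finite complex) has the right dimension or that one passes to a minimal aspherical model; fortunately $\cat$ is a homotopy invariant so we may compute it on any $K(\RCK',1)$, and Dranishnikov's work provides exactly a $\vcd$-dimensional model, giving the matching upper bound. The secondary subtlety is ensuring the Toomer-invariant lower bound uses the \emph{same} field-independent quantity $\cdim_\ZZ$, which is handled by running over all prime fields as above.
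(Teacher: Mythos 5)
Your proposal follows essentially the same route as the paper: the Beben--Grbi\'c inequality $\cat(\ZK)\le\cat(\RK)$, the identification $\RK=B(\RCK')$ in the flag case combined with Eilenberg--Ganea/Stallings--Swan and Dranishnikov's formula for $\vcd\RCK$ for the upper bound, the Toomer invariant via the collapse of the Milnor--Moore spectral sequence (run over all fields, using $\cdim_\ZZ=\max_\FF\hdim_\FF$) for the lower bound, and a cyclic squeeze forcing all the equalities. Two corrections. First, in the body of your argument you misquote the two key inputs by compensating off-by-one errors: with the paper's normalization the sharp statements are $\cat(BG)=\cd G$ (Eilenberg--Ganea, the case $\cd G=1$ being handled by Stallings--Swan, so $BG$ is a wedge of circles) and $\vcd\RCK=1+\max_{I\in\K}\cdim_\ZZ\lk_\K I$; your weakened form $\cat(K(G,1))\le\cd G+1$ combined with the correctly stated Dranishnikov formula would only yield $\cat(\RK)\le 2+\max_{J\subset[m]}\cdim_\ZZ\K_J$, which is not enough, so you genuinely need the equality $\cat(\RK)=\cd\RCK'$ (your displayed chain, not your prose, is the correct version). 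Second, your unfinished attempt to prove $\max_{J\subset[m]}\cdim_\ZZ\K_J=\max_{I\in\K}\cdim_\ZZ\lk_\K I$ directly is unnecessary: flagness gives that every link is a full subcomplex, hence $\vcd\RCK=1+\max_{I\in\K}\cdim_\ZZ\lk_\K I\le 1+\max_{J\subset[m]}\cdim_\ZZ\K_J$, and the reverse inequality (and with it the combinatorial identity) drops out of the cyclic chain once the Toomer lower bound $\cat(\ZK)\ge 1+\max_{J\subset[m]}\cdim_\ZZ\K_J$ is in place --- which is exactly how the paper closes the argument.
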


Let $\Kf$ be the \emph{flagification} of $\K,$ the unique flag simplicial complex with the same $1$-skeleton. (The \emph{$i$-skeleton} of $\K$ is the simplicial complex $\sk_i\K:=\{I\in\K:~|I|\leq i+1\}.$) We give the following lower bound on $\cat(\ZK)$ in the non-flag case, which is most useful if $\nu(\K)$ is small (this number measures how far $\K$ is from being flag).

\begin{prp}
\label{prp:cat_zk_lower_bound}
Let $\K$ be a simplicial complex. Let $\nu(\K)$ be the smallest $n\geq 0$ such that the following holds: $J\in\K$ whenever $J\subset I\in\Kf$ and $|I\setminus J|\geq n.$ Then $$\cat(\ZK)\geq 1-\nu(\K)+\max_{J\subset[m]}\cdim_\ZZ\Kf_J.$$
\end{prp}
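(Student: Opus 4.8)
The plan is to compare $\ZK$ with the moment-angle complex $\ZKf$ of the flagification. Since $\K\subseteq\Kf$ have the same vertex set, the inclusion induces $f\colon\ZK\hookrightarrow\ZKf$, and it is standard that under the functorial Hochster-type splittings $H_*(\ZK;\k)=\bigoplus_{J\subseteq[m]}\widetilde H_{*-|J|-1}(\K_J;\k)$ and $H_*(\ZKf;\k)=\bigoplus_{J}\widetilde H_{*-|J|-1}(\Kf_J;\k)$ the map $f_*$ is the direct sum over $J$ of the restriction maps $\widetilde H_*(\K_J;\k)\to\widetilde H_*(\Kf_J;\k)$. I will bound $\cat(\ZK)$ from below by the Toomer invariant $e_\k$, which satisfies $e_\k(X)\le\cat(X)$, using that the Milnor--Moore filtration is natural: $f_*\bigl(F_pH_*(\ZK;\k)\bigr)\subseteq F_pH_*(\ZKf;\k)$. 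Fix a field $\k$ and a subset $J_0$ with $\widetilde H_d(\Kf_{J_0};\k)\ne0$, where $d:=\max_J\cdim_\ZZ\Kf_J$; we may assume $\nu:=\nu(\K)\le d$, since otherwise $1-\nu+d\le0$ and the inequality is vacuous. The goal then reduces to producing a subset $J$ with $\K_J=\Kf_J$ and $\widetilde H_{d-\nu}(\Kf_J;\k)\ne0$: a nonzero class $x\in\widetilde H_{d-\nu}(\K_J;\k)\subseteq H_*(\ZK;\k)$ is then carried by $f_*$ isomorphically onto the corresponding summand of $H_*(\ZKf;\k)$.

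Finding this $J$ is the combinatorial heart of the proof, and the step I expect to be the main obstacle. Since $\Kf$ is flag, $\lk_\Kf(v)=\Kf_{N(v)}$ for every vertex $v$, where $N(v)$ is the set of vertices adjacent to $v$; hence if $S$ is minimal with respect to inclusion among subsets with $\widetilde H_i(\Kf_S;\k)\ne0$ and $v\in S$ is arbitrary, the Mayer--Vietoris sequence of the decomposition $\Kf_S=\overline{\st}_{\Kf_S}(v)\cup\Kf_{S\setminus v}$ (the first term is a cone, hence contractible, and $\widetilde H_i(\Kf_{S\setminus v};\k)=0$ by minimality) embeds $\widetilde H_i(\Kf_S;\k)$ into $\widetilde H_{i-1}\bigl(\Kf_{N(v)\cap S};\k\bigr)$, which is therefore nonzero. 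Starting from $(J_0,d)$ and applying this step $\nu$ times, each time passing to a minimal subset realising the next homology group, I obtain $J_0\supsetneq J_1\supsetneq\dots\supsetneq J_\nu$ with $\widetilde H_{d-k}(\Kf_{J_k};\k)\ne0$ together with vertices $v_0,\dots,v_{\nu-1}$ satisfying $v_{k-1}\in J_{k-1}$ and $J_k\subseteq N(v_{k-1})\cap J_{k-1}$. It follows that the $v_i$ are pairwise distinct, lie outside $J_\nu$, are pairwise adjacent in $\Kf$, and each is adjacent to every vertex of $J_\nu$. Consequently, for any face $\sigma\in\Kf_{J_\nu}$ the set $I:=\sigma\sqcup\{v_0,\dots,v_{\nu-1}\}$ is a clique in the $1$-skeleton of $\Kf$, so $I\in\Kf$, and $|I\setminus\sigma|=\nu$; by the defining property of $\nu(\K)$ this forces $\sigma\in\K$. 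Hence $\Kf_{J_\nu}\subseteq\K$, so $\K_{J_\nu}=\Kf_{J_\nu}$ while $\widetilde H_{d-\nu}(\Kf_{J_\nu};\k)\ne0$, and we take $J:=J_\nu$.

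To finish, I would apply Theorem~\ref{thm:tor_ozk_answer} and Proposition~\ref{prp:mm_collapse} to the flag complex $\Kf$: the Milnor--Moore spectral sequence of $\ZKf$ collapses at $E^2$, so $H_*(\ZKf;\k)=\bigoplus_p\Tor^{H_*(\Omega\ZKf;\k)}_p(\k,\k)$, and under this identification the Hochster summand $\widetilde H_{d-\nu}(\Kf_{J_\nu};\k)$ is precisely the $J_\nu$-component of $\Tor^{H_*(\Omega\ZKf;\k)}_{d-\nu+1}(\k,\k)=E^\infty_{d-\nu+1,*}$; in particular its nonzero elements have Milnor--Moore filtration exactly $d-\nu+1$. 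Taking $0\ne x\in\widetilde H_{d-\nu}(\K_{J_\nu};\k)\subseteq H_*(\ZK;\k)$, the equality $\K_{J_\nu}=\Kf_{J_\nu}$ shows $f_*(x)$ is a nonzero element of that summand, so $f_*(x)\notin F_{d-\nu}H_*(\ZKf;\k)$; by naturality of the filtration $x\notin F_{d-\nu}H_*(\ZK;\k)$, whence $F_{d-\nu}H_*(\ZK;\k)\ne H_*(\ZK;\k)$ and $e_\k(\ZK)\ge d-\nu+1$. Therefore $\cat(\ZK)\ge e_\k(\ZK)\ge 1-\nu(\K)+\max_J\cdim_\ZZ\Kf_J$, as claimed; note that by Theorem~\ref{thm:cat_final} applied to $\Kf$ the right-hand side equals $\cat(\ZKf)-\nu(\K)$. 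Apart from the second paragraph, the argument is formal: naturality of the Milnor--Moore filtration together with the structure results already available for moment-angle complexes of flag complexes.
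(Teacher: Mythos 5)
Your route is genuinely different from the paper's. The paper proves the reverse-direction estimate $\cat(\Z_{\Kf})\le\cat(\ZK)+\nu(\K)$ by feeding the flagification filtration $\K=\L_0\subset\dots\subset\L_{\nu(\K)}=\Kf$ of Definition \ref{dfn:flag_filtration} into Proposition \ref{prp:beben_grbic_filtration_bounds}, with Proposition \ref{prp:nu_set_theoretic_description} identifying the filtration length with the $\nu(\K)$ of the statement, and then applies Theorem \ref{thm:cat_final} to $\Kf$. You instead construct, by a Mayer--Vietoris descent inside $\Kf$, a single subset $J_\nu$ with $\K_{J_\nu}=\Kf_{J_\nu}$ and $\H_{d-\nu}(\Kf_{J_\nu};\k)\neq0$, and then try to conclude via the Toomer invariant. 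Your second paragraph is correct and is the real content of your argument: minimality plus flagness gives the embedding $\H_i(\Kf_S;\k)\hookrightarrow\H_{i-1}(\Kf_{N(v)\cap S};\k)$, the vertices $v_0,\dots,v_{\nu-1}$ are distinct, pairwise adjacent, adjacent to all of $J_\nu$ and disjoint from it, and flagness of $\Kf$ together with the defining property of $\nu(\K)$ forces $\Kf_{J_\nu}\subset\K$; the reductions ($\nu\le d$, choice of the field via Lemma \ref{lmm:cdim_and_hdim}) are also fine.

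The gap is in your final paragraph, in the claim that nonzero elements of the Hochster summand $\H_{d-\nu}(\Kf_{J_\nu};\k)\subset H_*(\ZKf;\k)$ have Milnor--Moore filtration exactly $d-\nu+1$. Collapse at $E^2$ (Proposition \ref{prp:mm_collapse}) only identifies the associated graded of the filtration with $\bigoplus_p\Tor_p$; it does not tell you which vectors of $H_n(\ZKf;\k)$, $n=(d-\nu)+|J_\nu|+1$, lie in $F_{d-\nu}$. In that total degree $F_{d-\nu}$ is in general nonzero (it receives contributions from subsets $J'$ with $|J'|>|J_\nu|$), and nothing in the paper shows that the Milnor--Moore filtration respects the $\Zm$-multigrading, so a priori the $J_\nu$-summand could meet $F_{d-\nu}$ nontrivially. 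The cleanest repair makes the whole third paragraph unnecessary: since $\K_{J_\nu}=\Kf_{J_\nu}$, the coordinate projection and inclusion exhibit $\Z_{\K_{J_\nu}}=\Z_{\Kf_{J_\nu}}$ as a retract of $\ZK$; LS-category is monotone under retracts, so $\cat(\ZK)\ge\cat(\Z_{\Kf_{J_\nu}})\ge 1+\cdim_\ZZ\Kf_{J_\nu}\ge 1+d-\nu$ by Proposition \ref{prp:cat_zk_lower_bound_flag} and Lemma \ref{lmm:cdim_and_hdim}. (If you prefer to keep the Toomer-invariant language, compose your $f$ with this retraction: in $H_*(\Z_{\Kf_{J_\nu}};\k)$ your summand sits in the top multidegree, where $F_{d-\nu}$ vanishes in total degree $n$ for pure degree reasons since all subsets of $J_\nu$ have at most $|J_\nu|$ elements, and naturality of the filtration then gives $e_\k(\ZK)\ge d-\nu+1$, whence the claim by Theorem \ref{thm:ginsburg}.) With this repair your argument is complete; compared with the paper's proof it avoids Proposition \ref{prp:beben_grbic_filtration_bounds} entirely and shows that the lower bound is witnessed by a single full subcomplex on which $\K$ and $\Kf$ agree, at the price of the extra combinatorial lemma.
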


As an example, we calculate $\cat(\ZK)$ for the skeleta of flag triangulations of manifolds (see Corollary \ref{crl:cat_sk_manifold}).

The paper is organised as follows.

In Section \ref{section:preliminaries} we recall the basic properties of simplicial complexes and moment-angle complexes. Then we discuss multigraded associative algebras and relevant homological algebra.

In Section \ref{section:hodj_description} we prove Theorem \ref{thm:hodj_description} and describe the structure of a left $H_*(\OZK;\k)$-module on $H_*(\ODJ;\k).$

In Section \ref{section:relations_algebras} we interpret the minimal resolution of the left $H_*(\ODJ;\k)$-module $\k$ constructed by Fr\"oberg as a free resolution of the left $H_*(\OZK;\k)$-module $\k.$ This resolution is used to prove Theorems \ref{thm:tor_ozk_answer}, \ref{thm:poincare_series}, and \ref{thm:homotopy_ranks}.

In Section \ref{section:ls} we study the LS-category of $\ZK$ and $\RK$ in the flag case. We first calculate $\cat(\RK)$ and then show that the inequality $\cat(\ZK)\leq\cat(\RK)$ turns into an equality due to the collapse of the Milnor-Moore spectral sequence. Finally, we discuss lower bounds on $\cat(\ZK)$ in the non-flag case and the cup-length of $\ZK$ in the flag case.

\section{Preliminaries}
\label{section:preliminaries}

\subsection{Simplicial complexes and polyhedral products}
A \emph{simplicial complex} $\K$ on a vertex set $V$ is a non-empty collection of subsets $I\subset V,$ called \emph{faces}, that satisfies the following condition:
\begin{itemize}
\item If $I\in\K$ and $J\subset I,$ then $J\in\K.$
\end{itemize}
It follows that $\varnothing\in\K.$ An element $i\in V$ is called a \emph{ghost vertex} if $\{i\}\notin\K.$ We consider only complexes without ghost vertices, so
$\{i\}\in\K$ for every $i\in V.$
Usually $V=[m]:=\{1,\dots,m\}.$

For any $J\subset[m],$ the simplicial complex $\K_J:=\{I\in\K:~I\subset J\}$ on the vertex set $J$ is called a \emph{full subcomplex} of $\K$ on $J.$ 

A \emph{missing face} of $\K$ is a set $J\subset [m]$ such that $J\notin\K$,but every proper subset of $J$ is a face of $\K.$ A complex $\K$ is a \emph{flag} complex if all of its missing faces consist of two vertices. Every full subcomplex of a flag complex is a flag complex. A flag complex is uniquely determined by its 1-skeleton. Hence, for every $\K,$ there is a unique flag complex $\Kf$ with the same $1$-skeleton, called the \emph{flagification} of $\K.$ Clearly, $\K\subset\Kf.$

The \emph{link} of a simplex $I\in\K$ is the simplicial complex $\lk_\K I :=\{J\in\K: I\cap J=\varnothing,~I\cup J\in\K\}.$

\begin{lmm}[{\cite[Lemma 2.3]{pt}}]
\label{lmm:lk_is_full_subcomplex}
Let $\K$ be a flag simplicial complex. Let $I\in\K.$ Then $\lk_\K I $ is a full subcomplex of $\K.$\qed
\end{lmm}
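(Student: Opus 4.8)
The plan is to exhibit an explicit subset $J\subseteq[m]$ with $\lk_\K I=\K_J$. The natural choice is the vertex set of the link, namely
$$J:=\{v\in[m]\setminus I:~I\cup\{v\}\in\K\};$$
note that $J\cap I=\varnothing$ by construction. I would then prove the two inclusions $\lk_\K I\subseteq\K_J$ and $\K_J\subseteq\lk_\K I$ separately.

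For $\lk_\K I\subseteq\K_J$: given $S\in\lk_\K I$, we have $S\cap I=\varnothing$ and $S\cup I\in\K$, so in particular $S\in\K$; moreover, for each $v\in S$ the set $\{v\}\cup I$ is a subset of $S\cup I\in\K$, hence a face, so $v\in J$. Thus $S\subseteq J$ and $S\in\K$, i.e.\ $S\in\K_J$. This direction does not use the flag hypothesis.

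For $\K_J\subseteq\lk_\K I$: here the flag hypothesis is the whole point. Take $S\in\K_J$, so $S\subseteq J$ and $S\in\K$. Then $S\cap I\subseteq J\cap I=\varnothing$, and to conclude $S\in\lk_\K I$ it remains to check $S\cup I\in\K$. Since $\K$ is flag, it suffices to verify that every two-element subset of $S\cup I$ is a face of $\K$: pairs contained in $S$ are faces because $S\in\K$, pairs contained in $I$ are faces because $I\in\K$, and a mixed pair $\{v,w\}$ with $v\in S$ and $w\in I$ lies in $\{v\}\cup I$, which is a face because $v\in S\subseteq J$. Hence $S\cup I\in\K$, and so $S\in\lk_\K I$.

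Combining the two inclusions gives $\lk_\K I=\K_J$, so the link is a full subcomplex on $J$. The only step requiring any thought is the reverse inclusion, and even there the argument is just the standard observation that a flag complex is determined by its $1$-skeleton, applied to the candidate face $S\cup I$; I do not expect a genuine obstacle.
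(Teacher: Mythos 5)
Your proof is correct. Note that the paper itself gives no argument for this lemma --- it is quoted with a reference to Panov--Theriault \cite[Lemma 2.3]{pt} --- and your argument is precisely the standard one: identify $J$ as the vertex set $\{v\notin I: I\cup\{v\}\in\K\}$, observe that $\lk_\K I\subseteq\K_J$ holds for any complex, and use flagness (every set all of whose two-element subsets are edges is a face) to get $S\cup I\in\K$ for $S\in\K_J$, which gives the reverse inclusion. So your proposal correctly supplies the proof the paper delegates to the reference, with no gaps.
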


Now let $(\underline{X},\underline{A})=\{(X_i,A_i)\}_{i=1}^m$ be a sequence of pairs of topological spaces and $\K$ be a simplicial complex on $[m].$ The corresponding \emph{polyhedral product} is the topological space
$$(\underline{X},\underline{A})^\K:=\bigcup_{I\in \K}\left(\prod_{i\in I}X_i\times \prod_{i\notin I}A_i\right) \subset \prod_{i=1}^m X_i.$$
We write $(X,A)^\K:=(\underline{X},\underline{A})^\K$ if $X_1=\dots=X_m=X,$ $A_1=\dots=A_m=A.$ We also set $X^\K:=(X,\pt)^\K.$

The
\emph{moment-angle complex} $\ZK:=(D^2,S^1)^\K$ and \emph{real moment-angle complex} $\RK:=(D^1,S^0)^\K$ are important special cases of this construction. The homology and cohomology of moment-angle complexes are well known.
\begin{thm}[see {\cite[Theorem~4.5.8]{ToricTopology}}]\label{thm:moment_angle_homology}\pushQED{\qed}

For any abelian group of coefficients,
$$
    H_p(\ZK)\cong\bigoplus_{J\subset[m]}\H_{p-|J|-1}(\K_J),\quad H_p(\RK)\cong\bigoplus_{J\subset[m]} \H_{p-1}(\K_J),
$$
\[
    H^p(\ZK)\cong\bigoplus_{J\subset[m]}\H^{p-|J|-1}(\K_J),\quad H^p(\RK)\cong\bigoplus_{J\subset[m]} \H^{p-1}(\K_J).\qedhere
\]
\end{thm}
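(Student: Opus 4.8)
The plan is to build $H_*(\ZK)$ and $H_*(\RK)$ from a single explicit CW model. Give the pair $(D^2,S^1)$ the CW structure with one $0$-cell $1$ and one $1$-cell $T$ forming $S^1$, together with a $2$-cell $D$ attached by the identity map $S^1\to S^1$, so that in cellular chains $\partial T=0$ and $\partial D=T$; and give $(D^1,S^0)$ the structure with two $0$-cells $1,p$ (the points of $S^0$) and one $1$-cell $e$, so $\partial e=p-1$. The $m$-fold products $(D^2)^m$ and $(D^1)^m$ inherit product CW structures, and $\ZK$, $\RK$ sit inside them as the CW subcomplexes consisting of the product cells $c_1\otimes\dots\otimes c_m$ whose set of ``interior directions'' $\{i:c_i=D\}$, respectively $\{i:c_i=e\}$, is a face of $\K$.

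For $\ZK$ the crucial point is that its cellular chain complex splits as a direct sum of \emph{subcomplexes}
$$C_*(\ZK)=\bigoplus_{J\subseteq[m]}C^{(J)}_*,\qquad C^{(J)}_*=\big\langle\,c_1\otimes\dots\otimes c_m\ :\ c_i=1\text{ for }i\notin J,\ c_i\in\{T,D\}\text{ for }i\in J,\ \{i:c_i=D\}\in\K\,\big\rangle,$$
where the label $J$ is recovered from a cell as $\{i:c_i\neq1\}$. This really is a decomposition into subcomplexes because $\partial D=T$ and $\partial T=0$: each term of $\partial(c_1\otimes\dots\otimes c_m)$ is obtained by lowering a single $D$ to a $T$ in one interior direction $i\in J$, which alters no other factor and keeps $c_i\neq1$. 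Now send the cell of $C^{(J)}_*$ with $D$'s exactly in the directions of $\sigma\subseteq J$ to the face $\sigma\in\K_J$; this identifies $C^{(J)}_*$, up to signs, with the augmented simplicial chain complex of $\K_J$ shifted up by $|J|+1$ (the cell indexed by $\sigma$ has homological degree $2|\sigma|+(|J|-|\sigma|)=|J|+|\sigma|$, i.e.\ the simplicial degree $|\sigma|-1$ plus $|J|+1$, and the cellular boundary is carried to the simplicial one after the usual sign normalisation). Taking homology gives $H_n(\ZK)\cong\bigoplus_{J}\H_{n-|J|-1}(\K_J)$; here the $J=\varnothing$ term is the augmented chain complex of $\K_\varnothing=\{\varnothing\}$ placed in homological degree $0$, contributing a single $\ZZ$ in degree $0$, in line with $H_0(\ZK)=\ZZ$.

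The computation for $\RK$ follows the same plan with $(D^1,S^0)$ in place of $(D^2,S^1)$: a face $\sigma\in\K_J$ now indexes a cell of degree $|\sigma|$, so the shift is by $1$ rather than $|J|+1$, yielding $H_n(\RK)\cong\bigoplus_J\H_{n-1}(\K_J)$. The one new subtlety is that $\partial e=p-1$ has two terms, so the grading by $\{i:c_i\neq1\}$ is not literally preserved by $\partial$; this is repaired by filtering $C_*(\RK)$ by the cardinality of the interior support and checking that the associated spectral sequence degenerates at once onto $\bigoplus_J\H_{n-1}(\K_J)$. (Alternatively, one can deduce all four formulas from the general stable splitting of polyhedral products, or obtain the cohomology of $\ZK$ from the isomorphism $H^*(\ZK;\k)\cong\Tor_{\k[v_1,\dots,v_m]}(\kK,\k)$ combined with Hochster's formula for $\Tor$, which itself rests on exactly the same multidegree decomposition.) Finally, the cohomology statements and the passage to an arbitrary coefficient group $G$ are purely formal: every chain group above is free abelian and the splitting respects this, so applying $-\otimes G$, respectively $\Hom(-,G)$, to the finite direct sum yields the isomorphisms with $\H_\bullet(\K_J;G)$ and $\H^\bullet(\K_J;G)$. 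The only genuine work is the sign bookkeeping identifying the cellular and simplicial differentials and the short filtration argument in the $\RK$ case; the geometric input --- the decomposition indexed by full subcomplexes --- is immediate from the cell structure.
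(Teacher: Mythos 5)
The paper itself does not prove this statement: it is quoted from \cite[Theorem~4.5.8]{ToricTopology}, so the relevant comparison is with the standard cellular argument behind that citation. Your treatment of $\ZK$ is exactly that argument and is correct: with the cells $1,T,D$ one has $\partial T=0$, $\partial D=T$, so the boundary preserves the support $J=\{i:c_i\neq 1\}$, the cellular chain complex splits at the chain level as a direct sum over $J$ of copies of the augmented simplicial chain complex of $\K_J$ shifted by $|J|+1$, and since this is a splitting of complexes of finitely generated free abelian groups, the homology, cohomology, and arbitrary-coefficient statements all follow by applying $-\otimes G$, resp. $\Hom(-,G)$, summand by summand.

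The $\RK$ half, as written, has a genuine gap. Filtering by the size of the support and asserting that the spectral sequence ``degenerates at once'' is not a proof: you neither verify that the differentials $d^r$, $r\geq 1$, vanish, nor would degeneration alone suffice, because over $\ZZ$ (or an arbitrary abelian group) you would only obtain the associated graded of a filtration on $H_*(\RK)$ and would still have to resolve extension problems before concluding the direct-sum formula --- and resolving them is essentially equivalent to what you are trying to prove. The repair is elementary and removes the spectral sequence altogether: change basis in $C_*(D^1)$ from $\{1,p,e\}$ to $\{1,\,p-1,\,e\}$. The change of basis is triangular and does not affect the $e$-positions, so the monomials in the new basis with $\{i:c_i=e\}\in\K$ still form a basis of $C_*(\RK)$; now $\partial e=p-1$ is itself a basis element supported on $\{i\}$, so the differential preserves $J=\{i:c_i\neq 1\}$, and one gets a chain-level splitting $C_*(\RK)\cong\bigoplus_{J\subset[m]}\widetilde{C}_{*-1}(\K_J;\ZZ)$ exactly parallel to the $\ZK$ case; the cohomology and coefficient statements then follow as before. (Your parenthetical alternative via the stable splitting of polyhedral products is also valid, but if you invoke it the cellular argument is not needed at all.)
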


\subsection{Multigraded associative algebras}
In what follows, $\k$ is a commutative ring with unit and $\otimes$ is the tensor product of $\k$-modules. For field coefficients, we usually write $\FF$ instead of $\k.$

By a \emph{multigrading} of a $\k$-module we mean a grading by an additive semigroup of the form $\ZZ^k\times\Zm,$ $k=0,1,2.$ The multidegree of a homogeneous element $x$ is denoted by $\deg x$ or by $|x|.$ We use Greek letters for elements of $\Zm$ and $k$-tuples of latin letters for elements of  $\ZZ^k$. The basis of $\Zm$ is denoted by $e_1,\dots,e_m.$ For a multiindex $\alpha=(\alpha_1,\dots,\alpha_m)\in\Zm$ we write $|\alpha|:=\sum_{i=1}^m \alpha_i.$

There are standard projections $\Zm\to\ZZ,~e_i\mapsto 1$ and $\ZZ^k\to\ZZ,~(n_1,\dots,n_k)\mapsto n_1+\dots+n_k.$ Hence every $(\ZZ^k\times\Zm)$-graded $\k$-module is $\ZZ$-graded by the total grading:
$$V_n:=\bigoplus_{i_1+\dots+i_k+|\alpha|=n}V_{i_1,\dots,i_k,\alpha}.$$

For the degree of homogeneous components, we use the agreement $\deg M_\alpha=\deg M^\alpha=\alpha$ with the only exception: $\deg\Ext^n_A = -n.$ For a free graded $\k$-module   $M,$ let $M^\#$ denote the \emph{graded dual} $\k$-module: $(M^\#)_\alpha:=\Hom_\k(M_\alpha,\k).$

By a \emph{multigraded algebra} we mean a $(\ZZ^k\times\Zm)$-graded connected associative $\k$-algebra with unit which is a free $\k$-module of finite type. (A $(\ZZ^k\times\Zm)$-graded algebra $A$ is \emph{connected} if it is connected as a $\ZZ$-graded algebra, i.e. $A_{<0}=0$ and $A_0\cong\k.$) Here are some examples:
\begin{enumerate}
\item The $(\ZZ^k\times\Zm)$-graded free associative algebra (tensor algebra) $T(a_1,\dots,a_N)$ for arbitrary generators $a_i$ of positive total degree;
\item The $\Zm$-graded commutative $\k$-algebra $\k[m]:=\k[v_1,\dots,v_m],$ $\deg v_i=2e_i;$
\item The $\Zm$-graded \emph{Stanley-Reisner algebra} of a simplicial complex $\K,$
$$\textstyle\kK:=\k[v_1,\dots,v_m]/\left(\prod_{i\in I}v_i: I\notin\K\right),\quad\deg v_i=2e_i;$$
\item The $(\ZZ\times\Zm)$-graded exterior $\k$-algebra $\Lm:=\Lambda[u_1,\dots,u_m],$ $\deg u_i=(-1,2e_i).$
\end{enumerate}

Now let $A$ be a $(\ZZ^k\times\Zm)$-graded $\k$-algebra, $L,\widetilde{L}$ be left $A$-modules and $R$ be a right $A$-module. Then, for every $i\geq 0,$ the $\k$-modules $\Ext^i_A(L,\widetilde{L})$ and $\Tor^A_i(R,L)$ are $(\ZZ^k\times\Zm)$-graded. Hence $\Ext_A(L,\widetilde{L})$ and $\Tor^A(L,R)$ are $(\ZZ\times\ZZ^k\times\Zm)$-graded $\k$-modules. For example,
\begin{align*}
\text{if }x\in\Ext_A^i(L,\widetilde{L})_{j_1,\dots,j_k,\alpha},&
\quad\text{ then }\deg x=(-i,j_1,\dots,j_k,\alpha);\\
\text{if }y\in\Tor^A_i(R,L)_{j_1,\dots,j_k,\alpha},&
\quad\text{ then }\deg y=(i,j_1,\dots,j_k,\alpha).
\end{align*}

The isomorphism $\Lm\cong\Ext_{\k[m]}(\k,\k)$ explains our choice of grading for $\Lm.$

\subsection{Poincar\'e series and minimal presentations}
\begin{dfn}
Let $V=\bigoplus_{i,\alpha} V_{i,\alpha}$ be a $(\ZZ\times\Zm)$-graded vector space. The \emph{Poincar\'e series} of $V$ is a formal power series on $m+1$ variables $(t,\lambda)=(t,\lambda_1,\dots,\lambda_m):$
$$F(V;t,\lambda):=\sum_{i\in\ZZ}\sum_{\alpha\in \Zm} \dim(V_{i,\alpha})\cdot t^i\lambda^\alpha,\quad\lambda^\alpha:=\prod_{i=1}^m \lambda_i^{\alpha_i}.$$
\end{dfn}
The space $V$ will usually be concentrated in degrees $\Zl\times\Zm,$ so $F(V;t,\lambda)\in\ZZ[[t^{-1},\lambda]].$

A \emph{presentation} of a multigraded associative $\k$-algebra $A$ with unit is an epimorphism $\pi:T(a_1,\dots,a_N)\twoheadrightarrow A$ with a choice of elements $r_1,\dots,r_M\in T(a_1,\dots,a_N)$ called \emph{relations} that generate the ideal $\Ker\pi\subset T(a_1,\dots,a_N).$
We assume that the generators have positive degrees with respect to the total grading, and that the relations are homogeneous. Hence $A$ is connected, and the projection $\varepsilon:A\to\k$ makes $\k$ into a left $A$-module. Moreover, the following standard properties of connected graded algebras remain valid, with the same proofs by induction.

\begin{prp}
\label{prp:homological}
Let $\FF$ be a field, and let $A$ be a multigraded $\FF$-algebra.
\begin{enumerate}
\item There is a free resolution of the left $A$-module $\FF$ of the form
$$\xymatrix{
\dots\ar[r]^-{d}& A\otimes_\FF R_2\ar[r]^-{d} & A\otimes_\FF R_1\ar[r]^-{d} & A\ar[r]^-{\varepsilon} & \FF\ar[r] & 0
}$$
such that $d:R_n\to A^+\otimes_\FF R_{n-1}$ and $d:R_1\to A^+.$
This resolution is called a \emph{minimal} resolution and is unique up to an isomorphism. After applying the functor $\FF\otimes_A(-),$ all differentials become zero; hence $R_n\cong\Tor^A_n(\FF,\FF).$ 

\item Up to a natural equivalence, there is a one-to-one correspondence between presentations $A\simeq T(a_1,\dots,a_N)/(r_1,\dots,r_M)$ and exact sequences of left $A$-modules of the form
$A\otimes_\FF R_2\to A\otimes_\FF R_1\to A\overset\varepsilon\longrightarrow \FF\to 0.$
Under this correspondence, $\{a_1,\dots,a_N\}$ is a basis of the graded vector space $R_1,$ and $\{r_1,\dots,r_M\}$ is a basis of $R_2.$

In particular, there is a \emph{minimal presentation} $A\simeq T(a_1,\dots,a_N)/(r_1,\dots,r_M)$ such that $$\Tor^A_1(\FF,\FF)\simeq \bigoplus_{i=1}^N \FF\cdot a_i,\quad \Tor^A_2(\FF,\FF)\simeq\bigoplus_{j=1}^M\FF\cdot r_j.$$
For a minimal presentation, the numbers of generators and relations in each degree are the smallest possible among all presentations of $A.$
\item The Poincar\'e series of $A$ and $\Tor^A_n(\FF,\FF)$ are related by the identity
$$\sum_{n=0}^\infty (-1)^n\cdot F(\Tor_n^A(\FF,\FF);t,\lambda)=\frac{1}{F(A;t,\lambda)}.$$
\end{enumerate}
\end{prp}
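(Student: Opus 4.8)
The plan is to run the classical construction of minimal free resolutions over connected graded algebras (as in Milnor--Moore or Loday--Vallette) in the multigraded setting, observing that every algebra, module and map that occurs is $(\ZZ^k\times\Zm)$-graded, so all constructions inherit the grading for free. The one tool used repeatedly is the \emph{graded Nakayama lemma}: if $M$ is a left $A$-module which is a free $\k$-module of finite type concentrated in total degrees bounded below, then homogeneous elements of $M$ generate $M$ over $A$ iff their images span $\FF\otimes_A M=M/A^+M$; thus lifting an $\FF$-basis of $M/A^+M$ gives a minimal generating set, and any two such have equal multigraded dimensions. Since $A$ is connected and of finite type, this applies to $A^+$ and, inductively, to every syzygy module below.

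For assertion 1, I would build $P_\bullet\to\FF$ by induction: put $P_0:=A$, $Z_0:=\Ker\varepsilon=A^+$, and given a syzygy module $Z_{n-1}$ (finite type, bounded below) set $R_n:=\FF\otimes_A Z_{n-1}$, lift an $\FF$-basis to a minimal generating set of $Z_{n-1}$ to get $d\colon P_n:=A\otimes_\FF R_n\twoheadrightarrow Z_{n-1}$, and let $Z_n:=\Ker d$. Minimality is built in: $Z_{n-1}\subset A^+P_{n-1}=A^+\otimes_\FF R_{n-1}$ --- for $n\geq2$ because $\FF\otimes_A P_{n-1}\to\FF\otimes_A Z_{n-2}$ is an isomorphism by construction, so that $Z_{n-1}=\Ker(P_{n-1}\to Z_{n-2})\subset\Ker(P_{n-1}\to\FF\otimes_A P_{n-1})=A^+P_{n-1}$, and trivially for $n=1$. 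Hence $d$ sends $R_n$ into $A^+\otimes_\FF R_{n-1}$ (and $R_1$ into $A^+$), which also forces each $R_n$ to be supported in total degrees $\geq n$. Applying $\FF\otimes_A(-)$ then annihilates all differentials, so $\Tor^A_n(\FF,\FF)=\FF\otimes_A P_n=R_n$. Uniqueness up to isomorphism follows from the comparison theorem together with minimality: a chain map between two minimal resolutions lifting $\id_\FF$ induces an isomorphism on $\FF\otimes_A(-)$, hence --- by graded Nakayama and induction on total degree via the five lemma --- is an isomorphism of complexes.

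For assertion 2, I would match the truncation $P_2\xrightarrow{d}P_1\xrightarrow{d}P_0\to\FF\to0$ of a free resolution with a presentation. Given such an exact sequence with $P_i=A\otimes_\FF R_i$, choose a homogeneous basis $\{a_i\}$ of $R_1$ and lift it to $\bar a_i\in A^+$; since the $\bar a_i$ generate $A^+$ as a left ideal and $A$ is connected, a degree induction shows they generate $A$ as a $\k$-algebra, giving $\pi\colon T(a_1,\dots,a_N)\twoheadrightarrow A$. To read off the relations, use the free resolution of $\FF$ over the free algebra, $0\to T\otimes_\FF\langle a_1,\dots,a_N\rangle\to T\xrightarrow{\varepsilon}\FF\to0$ (every element of $T^+$ is uniquely a left $T$-combination of the $a_i$, obtained by stripping off the last letter); a diagram chase then identifies $\Ker\pi$, as a two-sided ideal, with the ideal generated by lifts $r_j\in T$ of a basis of $R_2$. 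Running this in reverse turns a presentation into such an exact sequence, and the asserted correspondence ``up to natural equivalence'' is then bookkeeping. Inserting the minimal resolution of assertion 1 yields a minimal presentation with $\Tor^A_1(\FF,\FF)\cong\bigoplus\FF\cdot a_i$ and $\Tor^A_2(\FF,\FF)\cong\bigoplus\FF\cdot r_j$; and for an arbitrary presentation, graded Nakayama forces the associated $R_1,R_2$ to surject in each multidegree onto those of the minimal one, which gives the minimality of the numbers of generators and relations.

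For assertion 3, I run an Euler-characteristic computation on the minimal resolution. Because each $R_n$ is supported in total degrees $\geq n$ and everything is of finite type, in any fixed multidegree only finitely many $P_n$ contribute, each finite-dimensional, so exactness of $P_\bullet\to\FF$ gives, coefficientwise,
$$\sum_{n\geq0}(-1)^n F(A\otimes_\FF R_n;t,\lambda)=F(\FF;t,\lambda)=1 .$$
Using $F(A\otimes_\FF R_n;t,\lambda)=F(A;t,\lambda)\cdot F(R_n;t,\lambda)$, $R_0=\FF$, $R_n\cong\Tor^A_n(\FF,\FF)$, and the invertibility of $F(A;t,\lambda)=1+(\text{higher order})$ in the ambient power-series ring, this rearranges to the stated identity. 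The only real friction I anticipate is in assertion 2: phrasing ``presentation'' as ``start of a free resolution'' needs the free-algebra resolution above plus a careful diagram chase, and making the bijection precise is largely bookkeeping; everything else is the standard connected-graded-algebra machinery, with the $\Zm$-component of the grading simply carried along.
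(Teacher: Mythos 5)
Your proof is correct and takes essentially the same route as the paper, which for this proposition simply cites the standard connected-graded-algebra arguments (Propositions A.2.1 and A.2.3 of \cite{ToricTopology} for assertions 1 and 3, and \cite[\S 7]{wall} for assertion 2), noting that they go through verbatim in the multigraded setting. Your graded-Nakayama construction of the minimal resolution, the Wall-style identification of presentations with truncated free resolutions, and the Euler-characteristic computation are exactly those standard proofs with the $\Zm$-component of the grading carried along, as the paper intends.
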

\begin{proof}
\begin{enumerate}
\item See \cite[Proposition A.2.3]{ToricTopology}.
\item See \cite[\S 7]{wall}.
\item See \cite[Proposition A.2.1]{ToricTopology}.
\qedhere
\end{enumerate}
\end{proof}

For an arbitrary coefficient ring, the differentials in the complex obtained after applying the functor $\k\otimes_A (-)$ to the minimal resolution can be non-zero (see the remark after Proposition A.2.3 in \cite{ToricTopology}). However, any presentation of $A$ still corresponds to a resolution, and $\Tor^A(\k,\k)$ is the homology of the complex $\dots\to\bigoplus\k\cdot r_j\to\bigoplus \k\cdot a_i\to \k\to 0.$ This gives a lower bound on the number of generators and relations.
\begin{prp}
\label{prp:homological_for_rings}
Let $\k$ be a PID and $A$ be a multigraded $\k$-algebra. Denote by $N_{i,\alpha}$ the minimal number of generators of the $\k$-module $\Tor^A_i(\k,\k)_\alpha.$ Then every presentation of $A$ has at least $N_{1,\alpha}$ generators and $N_{2,\alpha}$ relations of degree $\alpha.$
\end{prp}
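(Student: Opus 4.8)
The plan is to unwind the observation made just before the statement — that any presentation of $A$ gives rise to a chain complex computing $\Tor^A_*(\k,\k)$ — one multidegree at a time, and then invoke elementary module theory over a PID.

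First I would fix a presentation $A\cong T(a_1,\dots,a_N)/(r_1,\dots,r_M)$ with homogeneous generators and relations. As recalled before the proposition (this is the coefficient-ring version of Proposition~\ref{prp:homological}(2), proved over a ring exactly as over a field; cf.\ \cite[\S 7]{wall}), it gives the start of a multigraded free resolution of the left $A$-module $\k$,
$$A\otimes_\k R_2\xrightarrow{\,d_2\,}A\otimes_\k R_1\xrightarrow{\,d_1\,}A\xrightarrow{\,\varepsilon\,}\k\to 0,$$
where $R_1$ is the free $\k$-module on symbols $a_1,\dots,a_N$, placed in the multidegrees of the generators, $R_2$ is free on $r_1,\dots,r_M$, the map $d_1$ sends $1\otimes a_i$ to the image of $a_i$ in $A$, and $d_2$ is built from expressions of the $r_j$ inside the augmentation ideal of $T(a_1,\dots,a_N)$. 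Since $\k$ is a PID and $A$ (hence each $A\otimes_\k R_i$) is a free $\k$-module of finite type, every graded component of $\Ker d_2$ is again finitely generated free, so this extends to a full multigraded free resolution $P_\bullet\to\k$ with $P_0=A$, $P_1=A\otimes_\k R_1$, $P_2=A\otimes_\k R_2$.

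Next I would apply $\k\otimes_A(-)$ to the deleted resolution. One has $\k\otimes_A(A\otimes_\k R_i)\cong R_i$, and the induced differential $R_1\to\k$ is zero because $d_1(1\otimes a_i)\in A^+$. Hence $\Tor^A_n(\k,\k)$ is the $n$-th homology of $\dots\to R_2\xrightarrow{\bar d_2}R_1\xrightarrow{\;0\;}\k\to 0$; in particular, for each multidegree $\alpha$, the $\k$-module $\Tor^A_1(\k,\k)_\alpha$ is a quotient of $(R_1)_\alpha$ and $\Tor^A_2(\k,\k)_\alpha$ is a subquotient of $(R_2)_\alpha$. Now $(R_1)_\alpha$ is free of rank equal to the number of generators of degree $\alpha$ in the presentation, and $(R_2)_\alpha$ is free of rank equal to the number of relations of degree $\alpha$. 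Over a PID, a submodule of a finitely generated free module of rank $r$ is free of rank $\leq r$, and a quotient of such a module needs at most $r$ generators; hence so does any subquotient. Therefore $N_{1,\alpha}$ is at most the number of generators of degree $\alpha$, and $N_{2,\alpha}$ is at most the number of relations of degree $\alpha$, which is the assertion.

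I do not expect a real obstacle: the one nontrivial ingredient is that a presentation yields an exact sequence $A\otimes R_2\to A\otimes R_1\to A\to\k\to 0$, and this argument (a syzygy calculation in the free algebra) requires no flatness hypothesis and works verbatim over any commutative ring; it is moreover already quoted in the excerpt. Everything else is bookkeeping — the whole construction preserves the $(\ZZ^k\times\Zm)$-grading because presentations are homogeneous, so one may argue one multidegree at a time — together with the single place where the PID hypothesis is genuinely used: a subquotient of a finitely generated free module of rank $r$ over a PID is generated by at most $r$ elements. This fails over arbitrary rings, which is exactly why one gets only this inequality instead of the exact count available over a field in Proposition~\ref{prp:homological}(2).
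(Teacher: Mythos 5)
Your proposal is correct and follows essentially the same route as the paper: build the exact sequence $A\otimes R_2\to A\otimes R_1\to A\to\k\to 0$ from the presentation as in Wall, extend to a free resolution, apply $\k\otimes_A(-)$ so that $\Tor^A_i(\k,\k)_\alpha$ becomes a subquotient of the degree-$\alpha$ part of the free $\k$-module on the generators (for $i=1$) or relations (for $i=2$), and then use that over a PID a subquotient of a free module of rank $r$ needs at most $r$ generators. The only cosmetic difference is your extra remark that the kernels are free of finite type, which is not needed since one can always extend to a free resolution by surjecting free modules onto kernels.
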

\begin{proof}
Let $A=T(a_1,\dots,a_N)/(r_1,\dots,r_M)$ be a presentation of $A.$ We construct the exact sequence
$$\bigoplus_{j=1}^M A\cdot r_j\to \bigoplus_{i=1}^N A\cdot a_i\to A\to \k\to 0$$
as in \cite[\S 7]{wall}, and extend it to obtain a free resolution of the left $A$-module $\k.$ Then $\Tor^A(\k,\k)$ is the homology of the complex
$$\dots\to\bigoplus_{j=1}^M \k\cdot r_j\to\bigoplus_{i=1}^N \k\cdot a_i\to \k\to 0.$$
In particular, $\Tor_2^A(\k,\k)_\alpha$ is a
quotient module of a submodule of the free $\k$-module generated by $\{r_j: \deg r_j=\alpha\}.$ For PIDs, every submodule of a free module is free with at most the same rank (see \cite[Proposition 5.1]{aluffi}). So $\Tor^A_2(\k,\k)_\alpha$ can be generated by $\#\{r_j:\deg r_j=\alpha\}$ elements, and this implies that $\#\{r_j:\deg r_j=\alpha\}\geq N_{2,\alpha}.$ Similarly, $\#\{a_i:\deg a_i=\alpha\}\geq N_{1,\alpha}.$
\end{proof}

\subsection{Stanley-Reisner (co)algebras}
Recall that the \emph{Stanley-Reisner algebra} is the following $\Zm$-graded $\k$-algebra:
$$\textstyle\kK:=\k[v_1,\dots,v_m]/\left(\prod_{i\in I}v_i: I\notin\K\right),\quad\deg v_i=2e_i.$$
For $\alpha=(\alpha_1,\dots,\alpha_m)\in\ZZ_{\ge 0}^m,$ we define $\supp\alpha:=\{j\in [m]: \alpha_j>0\}$ and $v^\alpha:=\prod_{j=1}^m v_j^{\alpha_j}\in \kK.$
Then $\{v^\alpha\}_{\supp\alpha\in \K}$ is an additive basis of $\kK$ and 
$$v^\alpha\cdot v^\beta=\begin{cases}v^{\alpha+\beta},&\supp\alpha\cup\supp\beta\in\K,\\ 0,&\supp\alpha\cup \supp\beta\notin\K.\end{cases}$$ 
We also consider the graded dual coalgebra $\kKc:=\kK^\#$ with the dual basis $\{\chi_\alpha\}_{\supp\alpha\in\K},$ $\chi_\alpha:=(v^\alpha)^\#,$ $\deg\chi_\alpha=2\alpha.$ The comultiplication in $\kKc$ is given by
$$\Delta \chi_\alpha=\sum_{\alpha=\beta+\gamma}\chi_\beta\otimes \chi_\gamma,$$ where the sum is taken over all $\beta,\gamma\in\Zm.$
In particular,
\begin{align*}
\Delta \chi_{ii}&=1\otimes \chi_{ii}+\chi_i\otimes \chi_i+\chi_{ii}\otimes 1,\quad &i=1,\dots,m;\\
\Delta \chi_{ij}&=1\otimes \chi_{ij}+\chi_i\otimes \chi_j+\chi_j\otimes \chi_i+\chi_{ij}\otimes 1,\quad &\{i,j\}\in\K.
\end{align*}

We will need the following notation: $\kK_{(s)}$ is the $\k$-submodule of $\kK$ with additive basis $$\{v^\alpha\in\kK:|\alpha|=s\}.$$ The modules $\kKc_{(s)}$ are defined similarly. For example, $\chi_{ij}\in\kKc_{(2)}$ while $\deg\chi_{ij}=2e_i+2e_j.$
\subsection{The bar and cobar constructions}
\label{subsection:bar}
We follow \cite[\S1]{priddy} (see also \cite[\S7.1, \S9.2]{mccleary}). Let $A$ be a connected $\Zm$-graded $\k$-algebra with unit which is a free $\k$-module of finite type. Let $I:=\Ker(\varepsilon:A\to\k)=A_{>0}\subset A$ be the augmentation ideal. We also set $\overline{a}:=(-1)^{1+|a|}\cdot a.$ Then, for a graded left $A$-module $L$ and a graded right $A$-module $R,$ the \emph{two-sided bar construction} $\B(R,L):=R\otimes T(I)\otimes L$ is the $(\ZZ\times\Zm)$-graded complex with grading
$$\deg r[a_1|\dots|a_s]l:= (s, |r|+\sum_{i=1}^s |a_i|+|l|)$$ and the following differential $\partial$ of degree $(-1,0):$
$$-\partial(r[a_1|\dots|a_s]l):=\overline{r}a_1[a_2|\dots|a_s]l+\sum_{i=1}^{s-1}\overline{r}[\overline{a}_1|\dots|\overline{a}_ia_{i+1}|\dots|a_s]l+\overline{r}[\overline{a}_1|\dots|\overline{a}_{s-1}]a_sl.$$
Here $r[a_1|\dots|a_s]l$ is the traditional notation for the element $$r\otimes (a_1\otimes\dots\otimes a_s)\otimes l\in R\otimes T(I)\otimes L= \B(R,L).$$

It is well known that $\B(A,L)$ is a free resolution of a left $A$-module $L.$ Hence $\Ext_A(\k,\k)$ can be computed as the cohomology of the complex
$$\Hom_A(\B(A,\k),\k)\cong  (\k\otimes_A\B(A,\k))^\#\cong \B(\k,\k)^\#.$$
(Here we use the \emph{adjunction isomorphism} $\Hom_A(M,N)\cong (N^\#\otimes_A M)^\#$ for left $A$-modules $M,N$ which are free $\k$-modules of finite type.)

This complex coincides with the \emph{Adams cobar construction} $\Omega_*A^{\#}$ (see \cite{adams}), where the coalgebra $A^\#:=\Hom_\k(A,\k)$ is the graded dual of $A.$ The cobar construction is the $(\ZZ\times\Zm)$-graded $\k$-module $\Omega_*A^\#\cong T(I^\#)$ with grading
$$\deg [x_1|\dots|x_s]:=(-s,|x_1|+\dots+|x_s|)$$ and the following differential $\partial^\#$ of degree $(-1,0):$
if $\Delta x_i=1\otimes x_i+x_i\otimes 1+\sum_r x_{i,r}'\otimes x_{i,r}''$ then
$$\partial^\#[x_1|\dots|x_s]=\sum_{i=1}^{s}\sum_r [\overline{x}_1|\dots|\overline{x}_{i-1}|\overline{x}_{i,r}'|x_{i,r}''|x_{i+1}|\dots|x_s].$$

The cobar construction is a differential graded algebra with the multiplication
$$[x_1|\dots|x_s]\smile [y_1|\dots|y_t]:=[x_1|\dots|x_s|y_1|\dots|y_t].$$
Hence the $(\ZZ\times\Zm)$-graded $\k$-module $H(\Omega_* A^\#)$ is an associative algebra.

We summarize this as follows.
\begin{prp}
\label{prp:bar-cobar}
Let $\k$ be a commutative ring with unit. Let $A$ be a connected $\Zm$-graded $\k$-algebra with unit which is a free $\k$-module of finite type. Then there is an isomorphism $\Ext_A(\k,\k)\cong H(\Omega_* A^\#)$ of $(\ZZ\times\Zm)$-graded $\k$-modules, and this isomorphism makes $\Ext_A(\k,\k)$ into a connected graded associative algebra.\qed
\end{prp}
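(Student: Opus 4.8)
The plan is to assemble the ingredients recalled in Subsection~\ref{subsection:bar}, checking at each step that no field hypothesis intervenes, so that the argument goes through over an arbitrary commutative ring. First I would recall that the two-sided bar construction $\B(A,\k)$ is a free resolution of the left $A$-module $\k$: each term $A\otimes T^s(I)$ is a free left $A$-module by construction, and the augmentation $\B(A,\k)\to\k$ is a homotopy equivalence via the standard simplicial contracting homotopy, which is defined purely combinatorially and hence works over any commutative ring (see \cite[\S1]{priddy}). Therefore $\Ext_A(\k,\k)$ is the cohomology of the $(\ZZ\times\Zm)$-graded cochain complex $\Hom_A(\B(A,\k),\k)$.

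Next I would justify passing to graded duals. Since $A$ is connected, every entry of a bar monomial $[a_1|\dots|a_s]$ lies in $I=A_{>0}$ and so has positive total degree; hence in each fixed multidegree only finitely many bar lengths $s$ occur, and --- using that $A$ is a free $\k$-module of finite type --- every homogeneous component of $\B(A,\k)$ and of $\B(\k,\k)=\k\otimes_A\B(A,\k)=T(I)$ is again a free $\k$-module of finite type. This makes the adjunction isomorphism $\Hom_A(M,N)\cong(N^\#\otimes_A M)^\#$ applicable componentwise and yields a chain of $(\ZZ\times\Zm)$-graded $\k$-module isomorphisms
\[
\Hom_A(\B(A,\k),\k)\cong\bigl(\k\otimes_A\B(A,\k)\bigr)^\#\cong\B(\k,\k)^\#\cong T(I)^\#\cong T(I^\#)=\Omega_*A^\#.
\]

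The crucial step is to verify that this identification is an isomorphism of cochain complexes, i.e. that the $\k$-linear dual of the bar differential $-\partial$ on $\B(\k,\k)$ is precisely the cobar differential $\partial^\#$. On $\B(\k,\k)=T(I)$ the two outer summands in the formula for $-\partial$ vanish, because the left and right tensor factors equal $1\in\k$ while $a_1,a_s\in I=\Ker\varepsilon$; the remaining ``multiply adjacent entries'' terms dualize, entry by entry, to the reduced comultiplication $x\mapsto\Delta x-1\otimes x-x\otimes 1$ of the dual coalgebra $A^\#$, which is exactly what enters the definition of $\partial^\#$. What then remains is to match the Koszul signs on the two sides, keeping track of the convention $\overline{a}=(-1)^{1+|a|}a$; this sign check is the one genuinely fiddly point, and I expect it to be the main obstacle, everything else being formal. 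Granting it, we obtain the desired isomorphism $\Ext_A(\k,\k)\cong H(\Omega_*A^\#)$ of $(\ZZ\times\Zm)$-graded $\k$-modules.

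Finally, $\Omega_*A^\#=T(I^\#)$ is a differential graded algebra under the concatenation product $\smile$ --- that $\partial^\#$ is a graded derivation of $\smile$ is immediate from its formula, since it acts position by position --- so $H(\Omega_*A^\#)$ is a $(\ZZ\times\Zm)$-graded associative $\k$-algebra, and transporting this product along the isomorphism above endows $\Ext_A(\k,\k)$ with an associative $\k$-algebra structure. It is connected: $\Ext^0_A(\k,\k)=\Hom_A(\k,\k)=\k$, and $\Ext^i_A(\k,\k)=0$ for $i<0$.
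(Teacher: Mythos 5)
Your argument is exactly the one the paper uses: Proposition~\ref{prp:bar-cobar} is stated with its proof being the discussion of Subsection~\ref{subsection:bar}, namely that $\B(A,\k)$ is a free resolution of $\k$, that $\Hom_A(\B(A,\k),\k)\cong(\k\otimes_A\B(A,\k))^\#\cong\B(\k,\k)^\#$ via the adjunction isomorphism (valid since all graded pieces are free $\k$-modules of finite type), that this dual complex is the Adams cobar construction $\Omega_*A^\#$, and that the concatenation product makes its homology an associative algebra. Your added care about componentwise finiteness from connectedness and the sign/differential check is consistent with the paper's level of detail, so the proposal is correct and follows essentially the same route.
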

\section{Pontryagin algebras of Davis-Januszkiewicz spaces}
\label{section:hodj_description}

\subsection{Proof of Theorem \ref{thm:hodj_description}}
\subsubsection*{Proof of assertion \emph{(a)}}
By \cite[Proposition 8.4.10]{ToricTopology}, we have an isomorphism $H_*(\ODJ;\k)\cong H(\Omega_*\kKc)$ of graded $\k$-algebras.
Since $\kK=(\kKc)^\#$ is a free $\k$-module of finite type, it follows from Proposition \ref{prp:bar-cobar} that $H(\Omega_*\kKc)\cong \Ext_{\kK}(\k,\k)$ as graded $\k$-algebras.
\subsubsection*{Proof of assertion \emph{(b)}}
We will follow \cite[Corollary 1.1]{lofwall} to prove that $\kK^!$ is isomorphic to the ``diagonal'' subalgebra
$$D=\bigoplus_{i,\alpha:~|\alpha|=i}\Ext^i_{\kK}(\k,\k)_{2\alpha}\subset \Ext_{\kK}(\k,\k).$$
Define
$$D_0:=\bigoplus_{i,\alpha:~|\alpha|=i}\Omega_{-i}\kKc_{2\alpha}\subset \Omega_*\kKc.$$
Every element of $D_0$ is of the form $[\chi_{j_1}|\dots|\chi_{j_s}]$ by dimension reasons, so $D_0\cong T(\chi_1,\dots,\chi_m).$

Since the cobar construction $(\Omega_*\kKc,\partial^\#)$ is concentrated in degrees
$$\{(-i,2\alpha):~0\leq i\leq |\alpha|\}\subset\ZZ\times\Zm$$
and $\partial^\#$ has degree $(-1,0),$ it follows that $D= D_0/(D_0\cap \Img\partial^\#).$ Note that the $\k$-submodule $D_0\cap\Img\partial^\#\subset D_0$ is a two-sided ideal, since $D$ is an algebra. Now let ]
$$\varphi^\#:\kKc_{(2)}\to \kKc_{(1)}\otimes\kKc_{(1)}$$
be the non-primitive part of the comultiplication map. Since $\varphi^\#(\chi_{ii})=\chi_i\otimes \chi_i$ for any $i=1,\dots,m$ and $\varphi^\#(\chi_{ij})=\chi_i\otimes \chi_j+\chi_j\otimes \chi_i$ for any $\{i,j\}\in\K,$ it follows that $\kK^!\cong D_0/(\Img\varphi^\#).$

Finally, we show that $D_0\cap \Img\partial^\#= (\Img\varphi^\#)$ as two-sided ideals in $D_0.$ Let $f\in D_0\cap\Img\partial^\#.$ Since $\partial^\#$ has degree $(-1,0),$ we can assume that $f$ is homogeneous of degree $(-i,2\alpha),$ $|\alpha|=i.$ Then $f=\partial^\#g$ for some $g,$ $\deg g=(-i+1,2\alpha).$ Let $V=\kKc_{(1)}$ and $W=\kKc_{(2)}.$ We have
$$g\in \Omega_{-i+1}\kKc_{2\alpha}=\bigoplus_{i=0}^{n-2}V^{\otimes i}\otimes W\otimes V^{\otimes (n-i-2)}$$
for dimensional reasons. Since $\partial^\#V=0$ and $\partial^\#|_W=\varphi^\#,$ we get
$$\partial^\#g\in\sum_{i=0}^{n-2}V^{\otimes i}\otimes \Img\varphi^\#\otimes V^{\otimes (n-i-2)} \subset D_0\cdot \Img\varphi^\#\cdot D_0,$$ so $D_0\cap\Img\partial^\#\subset (\Img\varphi^\#).$
Conversely, $\partial^\#=\varphi^\#$ when restricted to $\kKc_{(2)}\subset I^\#\subset\Omega_*\kKc;$ hence $\Img\varphi^\#\subset D_0\cap\Img\partial^\#.$ 

Thus,
$$\Ext_{\kK}(\k,\k)\supset D=D_0/(D_0\cap\Img\partial^\#)=D_0/(\Img\varphi^\#)\cong\kK^!.$$

\subsubsection*{Proof of assertion \emph{(c)}}
If $\K$ is a flag simplicial complex, then $\kK$ is a quadratic algebra:
$$\kK=T(v_1,\dots,v_m)/(v_iv_j=0,~\{i,j\}\notin\K;~v_iv_j=v_jv_i,~\{i,j\}\in\K).$$

Hence the algebra $\kK\otimes\kK^!$ belongs to the class of quadratic algebras considered by Fr\"oberg in \cite{froberg}: our variables $v_i$ and $u_i$ correspond to his $X_i$ and $Y_i.$

Since condition B' of \cite[Lemma 4]{froberg} is satisfied, there is a free resolution $(\kK\otimes\kK^!,d)$ for the left $\kK$-module $\k:$
\begin{equation}
\label{eqn:face_ring_resolution}
\dots\overset d\longrightarrow
\kK\otimes \kK^!_{(2)}\overset d\longrightarrow
\kK\otimes \kK^!_{(1)}\overset d\longrightarrow
\kK\otimes \kK^!_{(0)}
\overset\varepsilon\longrightarrow\k\to 0.
\end{equation}

This complex is $(\ZZ\times\Zm)$-graded:
$$\deg u_i=(-1,2e_i),\quad\deg v_i=(0,2e_i).$$
Then $d$ has degree $(1,0),$ and $H^{*}((\kK^!)^\#_*,d^\#)\cong \Ext^{*}_{\kK}(\k,\k)_*.$ It follows from the grading on $\kK^!$ that $\Ext_{\kK}^i(\k,\k)_{2\alpha}=0$ unless $i=|\alpha|.$ Hence $\Ext_{\kK}(\k,\k)=D.$\qed
\begin{rmk}
We used the results stated in
\cite{froberg, lofwall, priddy}
in the case of field coefficients. However, the proofs remain valid for arbitrary $\k,$ provided that the $\k$-modules are free and of finite type. For example, the proofs use the exactness of the functor $\Hom_\k(-,\k)$ for free $\k$-modules and the adjunction isomorphisms $M^{\#\#}\cong M$ and $\Hom_A(M,N^\#)\cong (N\otimes_A M)^\#,$ but never use the universal coefficient theorem.
\end{rmk}
\begin{rmk}
\label{rmk:panov_theriault_argument}
It would be interesting to deduce Theorem \ref{thm:hodj_description}(b) from Theorem \ref{thm:hodj_description}(c) topologically, using the flagification. By Panov and Theriault's result \cite{pt}, the map $\ODJ\to \Omega(\CC\mathrm{P}^\infty)^{\Kf}$ has a right homotopy inverse. Passing to homology, we obtain a $\k$-linear section $\kK^!=\k[\Kf]^!\hookrightarrow H_*(\ODJ;\k)$ of the $(\ZZ\times\Zm)$-graded multiplicative map $H_*(\ODJ;\k)\to\kK^!.$ However, it is unclear if the section is multiplicative and $(\ZZ\times\Zm)$-graded.
\end{rmk}
\begin{rmk}
We see from Theorem \ref{thm:hodj_description}(c) that, in the flag case, the algebra $H_*(\ODJ;\k)$ is concentrated in degrees $\{(-i,2\alpha)\in\ZZ\times\Zm:~i\geq 0,~i=|\alpha|\}.$ If $\K$ is not flag, it is not the case. (We can only say that it is concentrated in degrees $\{(-i,2\alpha)\in\ZZ\times\Zm:~0\leq i\leq|\alpha|\}.$)

For example, it can be shown that every missing face $J\notin\K$ corresponds to a certain nonzero element $\mu_J\in \Ext^2_{\kK}(\k,\k)_{2J}\cong H_{-2,2J}(\ODJ;\k)$ which can be interpreted as a ``higher commutator product'' $\mu_J=[\mu_{j_1},\dots,\mu_{j_k}],$ $J=\{j_1,\dots,j_k\}$ (see \cite[Example 8.4.6]{ToricTopology}.) If $|J|\neq 2$ then $\mu_J$ does not belong to the ``diagonal'' subalgebra $\kK^!.$
\end{rmk}

\subsection{The K\"unneth formula for the Pontryagin algebras}
\label{subsection:free_module}
There is a split principal fibration of H-spaces,
$$\OZK\overset i \longrightarrow  \ODJ\overset p \longrightarrow \mathbb{T}^m$$
(see \cite[Sect. 8.4]{ToricTopology}),
and hence an exact sequence of Hopf algebras,
$$1\to H_*(\OZK;\k)\overset {i_*}\longrightarrow H_*(\ODJ;\k)\overset{p_*}\longrightarrow \Lambda[u_1,\dots,u_m]\to 0.$$
In particular, $i_*$ and $p_*$ are maps of $\k$-algebras. The map
$p:\ODJ\to\mathbb{T}^m\simeq\Omega(\CC\mathrm{P}^\infty)^{m}$
is induced by the map of simplicial complexes $\K\hookrightarrow \Delta^{m-1};$ therefore, it sends the element
$$u_i\in\Ext_{\kK}(\k,\k)\cong H_*(\ODJ;\k)$$
to the element
$$u_i\in\Ext_{\k[m]}(\k,\k)=H_*(\mathbb{T}^m;\k)\cong\Lambda[u_1,\dots,u_m].$$

Let $I$ be a subset of $[m].$ We order its elements: $I=\{i_1,\dots,i_k\},$ $i_1<\dots<i_k,$ and define
$$u_I := u_{i_1}\wedge\dots\wedge u_{i_k}\in \Lambda[u_1,\dots,u_m],\quad \widehat{u}_I:= u_{i_1}\cdot\dotso\cdot u_{i_k}\in H_*(\ODJ;\k).$$
Clearly,
$\{u_I\}_{I\subset[m]}$
is an additive basis of
$\Lambda[u_1,\dots,u_m].$
The morphism $p_*$ has a $\k$-linear section
$\sigma:\Lambda[u_1,\dots,u_m]\to H_*(\ODJ;\k),$
defined by $\sigma(u_I):=\widehat{u}_I.$
However, this map is not multiplicative unless $\sk_1\K=\sk_1\Delta^{m-1}.$

The morphism $i_*$ makes $H_*(\ODJ;\k)$ into a left $H_*(\OZK;\k)$-module: $x\cdot y := i_*(x)y.$ Consider the $\k$-linear map
$$T:H_*(\OZK;\k)\otimes \Lambda[u_1,\dots,u_m]\to H_*(\ODJ;\k),\quad T(x\otimes u_I):=i_*(x)\widehat{u}_I.$$

\begin{prp}\label{prp:left_module_isomorphism} For any $\K,$ the map $T$ is an isomorphism of left $H_*(\OZK;\k)$-modules. Moreover, the following diagram of $\k$-modules is commutative:
\begin{equation}
\label{eqn:T_commutative}
\xymatrix{
H_*(\OZK;\k)\otimes \Lambda[u_1,\dots,u_m]\ar[d]_{T}^{\cong}\ar[r]^-{\varepsilon\otimes\id}& \k\otimes\Lambda[u_1,\dots,u_m]\ar[d]_-{\lambda\otimes u_I\mapsto \lambda u_I}^-{\cong}\\
H_*(\ODJ;\k)\ar[r]^-{p_*}& \Lambda[u_1,\dots,u_m]
}
\end{equation}

\end{prp}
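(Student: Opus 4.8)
The plan is to exploit the split principal fibration $\OZK\to\ODJ\to\mathbb{T}^m$ and the resulting short exact sequence of Hopf algebras
$$1\to H_*(\OZK;\k)\xrightarrow{i_*} H_*(\ODJ;\k)\xrightarrow{p_*}\Lambda[u_1,\dots,u_m]\to 0.$$
Since the fibration is split (there is a section $\mathbb{T}^m\to\ODJ$ of $p$, coming from the inclusion of the fat wedge / the $m$-torus factor), this is an extension of Hopf algebras with a retraction at the level of coalgebras. The general fact I want is the Milnor–Moore / Poincar\'e–Birkhoff–Witt style statement: if $1\to B\to E\to C\to 0$ is an extension of connected Hopf algebras over $\k$ (all free of finite type) which splits as a map of left $B$-modules and coalgebras, then the multiplication map $B\otimes C\to E$ is an isomorphism of left $B$-modules. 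In our setting $C=\Lambda[u_1,\dots,u_m]$ is a free $\k$-module with basis $\{u_I\}_{I\subset[m]}$, the section $\sigma$ sends $u_I\mapsto\widehat{u}_I$, and the map in question is exactly $T(x\otimes u_I)=i_*(x)\widehat u_I$. So the first step is to reduce the proposition to this module-splitting statement and then prove the statement by a filtration argument.

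Concretely, I would argue as follows. First, $T$ is a map of left $H_*(\OZK;\k)$-modules essentially by definition, since $T(x'\cdot(x\otimes u_I))=T(x'x\otimes u_I)=i_*(x'x)\widehat u_I=i_*(x')\bigl(i_*(x)\widehat u_I\bigr)=i_*(x')\cdot T(x\otimes u_I)$. Second, the commutativity of diagram \eqref{eqn:T_commutative} is a direct check: $p_*\bigl(i_*(x)\widehat u_I\bigr)=p_*i_*(x)\cdot p_*(\widehat u_I)=0\cdot(\dots)+\varepsilon(x)\,u_I$ because $p_*i_*=0$ (exactness) and $p_*$ is an algebra map sending $u_i\mapsto u_i$, hence $\widehat u_I\mapsto u_I$; unwinding, $p_*\circ T=(\text{mult})\circ(\varepsilon\otimes\id)$, which is precisely the square. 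Third, for the isomorphism: filter $H_*(\ODJ;\k)$ by powers of the (two-sided) ideal $\bar J$ generated by $\Img i_*$, equivalently use the increasing filtration $F_s=\sum_{|I|\le s} i_*\!\bigl(H_*(\OZK;\k)\bigr)\cdot\widehat u_{I}$. The commutative diagram together with exactness of the Hopf-algebra sequence shows $p_*$ identifies $F_s/F_{s-1}$ with $H_*(\OZK;\k)\otimes\Lambda^s[u_1,\dots,u_m]$, so $\gr T$ is an isomorphism of $\k$-modules in each filtration degree; since everything is of finite type and the filtration is exhaustive and bounded in each internal degree, $T$ itself is an isomorphism. (Alternatively, and perhaps more cleanly, one cites the standard fact that a split extension of connected Hopf algebras of finite type over any commutative ring is ``free'' in the above sense — this is the Hopf-algebraic form of PBW, valid over $\k$ because $\Lambda[u_1,\dots,u_m]$ is $\k$-free and the splitting is $\k$-linear — see e.g.\ the Milnor–Moore machinery as used in \cite[Sect.\ 8.4]{ToricTopology}.)

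The main obstacle, and the point that needs care, is the passage from field coefficients to an arbitrary commutative ring $\k$. Over a field the result is classical (it is how Panov–Ray and the Toric Topology book describe $H_*(\ODJ)$ as a free module over $H_*(\OZK)$), but over a general $\k$ one must know that the short exact sequence of Hopf algebras remains exact and splits \emph{over $\k$}, and that the associated graded argument does not require flatness hypotheses beyond the ones we have. This is fine here because: the fibration is split at the space level (so $p_*$ has an honest algebra or at least coalgebra section, and $i_*$ is a split injection of $\k$-modules with cokernel the $\k$-free module $\Lambda[u_1,\dots,u_m]$); $H_*(\OZK;\k)$ is $\k$-free of finite type in each degree (by Theorem \ref{thm:moment_angle_homology} applied degreewise, or by the subalgebra description); and therefore the filtration quotients $F_s/F_{s-1}$ are $\k$-free, so no Tor-terms obstruct the inductive identification. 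I would spell out this degreewise-finite-type and $\k$-freeness bookkeeping explicitly, since it is exactly the subtlety the paper flags in the remark after the proof of Theorem \ref{thm:hodj_description}. Once that is in place, $\gr T$ being an iso in every (finite-rank, $\k$-free) bidegree forces $T$ to be an iso, completing the proof.
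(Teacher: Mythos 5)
Your first two steps (the $H_*(\OZK;\k)$-linearity of $T$ and the commutativity of \eqref{eqn:T_commutative} via $p_*\circ i_*=\varepsilon$) are exactly the paper's. The gap is in the third step, which is the actual content of the proposition: bijectivity of $T$ is never really proved. In your filtration argument, exhaustiveness of $F_s=\sum_{|I|\le s}i_*\bigl(H_*(\OZK;\k)\bigr)\widehat{u}_I$ is literally the surjectivity of $T$, and the identification of $F_s/F_{s-1}$ with $H_*(\OZK;\k)\otimes\Lambda^s[u_1,\dots,u_m]$ is the injectivity of the associated graded map; both are asserted with a vague appeal to ``exactness of the Hopf-algebra sequence'' and the commutative square, but neither follows from these: $p_*$ takes values in $\Lambda[u_1,\dots,u_m]$, so it cannot separate the $H_*(\OZK;\k)$-coefficients of the various $\widehat{u}_I$ with $|I|=s$, and you offer no mechanism (coproduct, a dual Leray--Hirsch argument, or topology) to rule out relations. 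The fallback citation --- that a split extension of connected Hopf algebras over an \emph{arbitrary} commutative ring, with only a $\k$-linear (or coalgebra) splitting, is automatically free over the sub-Hopf-algebra --- is not a standard fact in that generality; over a field it is Milnor--Moore, but over general $\k$ it is precisely what has to be shown here, and you neither verify that $\sigma(u_I)=\widehat{u}_I$ is a coalgebra map nor use that property. Finally, your bookkeeping invokes $\k$-freeness of $H_*(\OZK;\k)$ ``by Theorem \ref{thm:moment_angle_homology}''; that theorem computes $H_*(\ZK)$, not the loop-space homology, and $\k$-freeness of $H_*(\OZK;\k)$ is neither available in general nor needed.

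The paper's proof of bijectivity sidesteps all of this: the split principal fibration $\OZK\to\ODJ\to\mathbb{T}^m$ is trivial, so multiplication against the section gives a homotopy equivalence $\OZK\times\mathbb{T}^m\simeq\ODJ$, and the K\"unneth formula applies because $H_*(\mathbb{T}^m;\k)=\Lambda[u_1,\dots,u_m]$ is a free $\k$-module --- no hypothesis on $H_*(\OZK;\k)$ is required; since $\sigma$ is a section of $p_*$, the resulting $\k$-module isomorphism is $T$, and your linearity step upgrades it to a module isomorphism. If you insist on a purely algebraic route, you would need at least a graded Nakayama argument for surjectivity (starting from $\k\otimes_{H_*(\OZK;\k)}H_*(\ODJ;\k)\cong\Lambda[u_1,\dots,u_m]$, which itself must be justified over general $\k$) plus a genuine injectivity argument using the coalgebra structure; the topological argument is both shorter and the one that actually works for arbitrary $\k$ and arbitrary $\K$.
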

\begin{proof} If $x_1,x_2\in H_*(\OZK;\k)$ and $I\subset[m],$ then
\begin{align*}
T(x_1\cdot (x_2\otimes u_I))&= T(x_1x_2\otimes u_I) = i_*(x_1x_2)\widehat{u}_I = i_*(x_1)i_*(x_2)\widehat{u}_I\\
&=i_*(x_1)T(x_2\otimes u_I)=x_1\cdot T(x_2\otimes u_I).
\end{align*}
Hence $T$ is a $H_*(\OZK;\k)$-linear map.

The algebra $\Lambda[u_1,\dots,u_m]$ is a free $\k$-module. The fibration $p$ is trivial, and $\sigma$ is a section of $p_*.$ Hence $T$ is an isomorphism of $\k$-modules by the K\"unneth formula. Since $T$ is $H_*(\OZK;\k)$-linear, it is an isomorphism of $H_*(\OZK;\k)$-modules.

Finally, we check that the diagram \eqref{eqn:T_commutative}
 is commutative:
$$\xymatrix{
x\otimes u_I\ar@{|->}[r]\ar@{|->}[d] & \varepsilon(x)\otimes u_I\ar@{|->}[d]\\
i_*(x)\widehat{u}_I\ar@{|->}[r] & p_*(i_*(x))u_I
}$$
The map $p\circ i$ is trivial up to homotopy, hence $p_*\circ i_*=\varepsilon.$\qedhere
\end{proof}

\section{Relations in the Pontryagin algebras in the flag case}
\label{section:relations_algebras}

In this section we consider several chain complexes of free left $H_*(\ODJ;\k)$- and $H_*(\OZK;\k)$-modules. These algebras are $(\ZZ\times\Zm)$-graded; hence the complexes are $(\ZZ\times\ZZ\times\Zm)$-graded:\\$\deg H_{-i,2\alpha}(\ODJ;\k)=(0,-i,2\alpha),$ and the differentials usually have degree $(-1, 0, 0).$

\subsection{The minimal resolution of the $H_*(\ODJ;\k)$-module $\k$}
Let $0\neq \chi_\alpha\in\kKc,$ so $\supp\alpha\in\K.$ Note that $\supp (\alpha-e_j)\in\K$ for every $j\in \supp\alpha.$ Hence $\chi_{\alpha-e_j}\in\kKc$ is well defined. We let $$\deg\chi_\alpha:=(|\alpha|,-|\alpha|,2\alpha)\in\ZZ\times\ZZ\times\Zm$$ in this section.

\begin{prp} Let $\K$ be a flag simplicial complex. Then the left $H_*(\ODJ;\k)$-module $\k$ has a minimal resolution
\begin{multline}
\label{eqn:odj_resolution}
\dots\overset{d}\longrightarrow
H_*(\ODJ;\k)\otimes \kKc_{(2)}\overset{d}\longrightarrow
H_*(\ODJ;\k)\otimes \kKc_{(1)}\\
\overset{d}\longrightarrow
H_*(\ODJ;\k)\otimes \kKc_{(0)}\overset\varepsilon\longrightarrow
\k\to 0
\end{multline}
with the following differential of degree $(-1,0,0):$
\begin{equation}
\label{eqn:froberg_differential}
d: \kKc_{(t)}\to H_*(\ODJ;\k)\otimes \kKc_{(t-1)},\quad \chi_\alpha\mapsto \sum_{j\in \supp\alpha}u_j\otimes \chi_{\alpha-e_j}.
\end{equation}
\end{prp}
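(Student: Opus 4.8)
The plan is to recognise \eqref{eqn:odj_resolution} as the minimal (Koszul) free resolution of $\k$ over $H_*(\ODJ;\k)\cong\kK^!$. By Theorem~\ref{thm:hodj_description}(c) the flag hypothesis gives $H_*(\ODJ;\k)\cong\kK^!$ as $(\ZZ\times\Zm)$-graded algebras, so it suffices to exhibit the minimal free resolution of the left $\kK^!$-module $\k$ in the stated form. For flag $\K$ the Stanley--Reisner algebra $\kK$ is quadratic (as recalled in the proof of Theorem~\ref{thm:hodj_description}(c)), $\kK^!$ is its quadratic dual, and $(\kK^!)^!\cong\kK$; as in the remark following that proof, the quadratic-duality and Koszul-complex formalism of \cite{froberg,priddy} is valid over $\k$ because every graded component $\kK_{(t)}$ and $\kK^!_{(t)}$ is a free $\k$-module of finite rank.

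Next I would invoke Koszulness. Fröberg's analysis of $\kK\otimes\kK^!$ via condition B' of \cite[Lemma~4]{froberg}, already used in the proof of Theorem~\ref{thm:hodj_description}(c), shows that $\kK$, and hence its quadratic dual $\kK^!$, is a Koszul algebra. Therefore the left $\kK^!$-module $\k$ has a minimal free resolution whose $t$-th term is $\kK^!\otimes\Tor^{\kK^!}_t(\k,\k)$, and by Koszul duality $\Tor^{\kK^!}_t(\k,\k)\cong(\kK_{(t)})^\#=\kKc_{(t)}$ (using $(\kK^!)^!\cong\kK$). Transporting along $H_*(\ODJ;\k)\cong\kK^!$, this is exactly the complex \eqref{eqn:odj_resolution}, and it is minimal because the Koszul differential has image in $H_{>0}(\ODJ;\k)\otimes\kKc$.

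It remains to match the differential with \eqref{eqn:froberg_differential}. Realise $\kKc_{(t)}=(\kK_{(t)})^\#$ inside $\langle u_1,\dots,u_m\rangle^{\otimes t}$ as the transpose of the multiplication $\langle v_1,\dots,v_m\rangle^{\otimes t}\twoheadrightarrow\kK_{(t)}$; under this identification $\chi_\alpha$ corresponds to the symmetrisation $\sum u_{i_1}\otimes\cdots\otimes u_{i_t}$, the sum being over all tuples $(i_1,\dots,i_t)$ with $e_{i_1}+\dots+e_{i_t}=\alpha$. This submodule lies in $\langle u_1,\dots,u_m\rangle\otimes\kKc_{(t-1)}$, and the Koszul differential is obtained by applying the left $\kK^!$-action to the leftmost tensor factor. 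Collecting the resulting terms according to that first factor $u_j$ (so $j\in\supp\alpha$, and $\supp(\alpha-e_j)\subseteq\supp\alpha\in\K$, so $\chi_{\alpha-e_j}$ is a genuine basis vector) gives $d\,\chi_\alpha=\sum_{j\in\supp\alpha}u_j\otimes\chi_{\alpha-e_j}$, with no signs because the symmetrisation is already built into $\kKc_{(t)}$. Finally, with $\deg\chi_\alpha=(|\alpha|,-|\alpha|,2\alpha)$ and $\deg u_j=(0,-1,2e_j)$, formula \eqref{eqn:froberg_differential} is visibly homogeneous of degree $(-1,0,0)$.

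The main obstacle is the bookkeeping in the last step: one must be careful that Fröberg's machinery genuinely produces a resolution over $\kK^!$ (and not only over $\kK$), that the inclusion $\kKc_{(t)}\subseteq\langle u_1,\dots,u_m\rangle\otimes\kKc_{(t-1)}$ used above is the one compatible with the Koszul differential, and that the sign conventions really do cancel so that \eqref{eqn:froberg_differential} holds on the nose. None of this is deep, but it is where all the grading and sign conventions have to be pinned down.
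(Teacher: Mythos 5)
Your proposal is correct, but it reaches the resolution by a different mechanism than the paper. The paper never invokes the abstract statement that the quadratic dual of a Koszul algebra is Koszul: after identifying $H_*(\ODJ;\k)\cong\kK^!$ via Theorem \ref{thm:hodj_description}(c), it observes that Fr\"oberg's construction for the quadratic pair is \emph{symmetric} in the two sets of variables (his $\mathrm{Index}$ is symmetric), so the same complex $\kK^!\otimes\kK$ with the differentials $d_1+\dots+d_m$ is already a resolution of $\k$ over $\kK^!$ with the roles of $u_i,v_i$ swapped; the formula \eqref{eqn:froberg_differential} is then read off directly from Fr\"oberg's explicit description of $d_j$ on monomials $u^{(\mu)}\otimes v^\alpha$, followed by the relabelling $v^\alpha\leftrightarrow\chi_\alpha$. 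You instead deduce Koszulness of $\kK^!$ from Koszulness of $\kK$ by quadratic duality, take the Koszul complex $\kK^!\otimes\kKc$, and identify its differential through the embedding $\kKc_{(t)}\subset\langle u_1,\dots,u_m\rangle^{\otimes t}$; your computation of the differential (grouping the symmetrised tensor by its leftmost factor, using $\supp(\alpha-e_j)\subset\supp\alpha\in\K$) is correct and sign-free, and minimality is clear since the image lies in $H_{>0}(\ODJ;\k)\otimes\kKc$. What your route buys is conceptual transparency (the complex is visibly the Koszul complex of $\kK^!$); what it costs is the extra classical input ``$A$ Koszul $\Rightarrow A^!$ Koszul'' and the $\Tor$-identification, which over an arbitrary $\k$ must be justified as in the paper's remark (all graded components free of finite type) rather than via Proposition \ref{prp:homological}(1), which is stated only for fields. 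It would be slightly cleaner to assert directly that exactness of the Koszul complex of $\kK^!$ is what duality (or Priddy/L\"ofwall, valid here by freeness) provides, rather than routing through the existence of a minimal resolution with terms $\Tor_t$; with that adjustment your argument is a sound alternative to the paper's more explicit, self-contained use of Fr\"oberg's resolution.
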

\begin{proof}
By Theorem~\ref{thm:hodj_description}, $H_*(\ODJ;\k)\cong\kK^!$ in the flag case. We use the Fr\"oberg resolution \cite{froberg} once again for the algebra $\kK^!\otimes\kK.$ This time the variables $u_i$ and $v_i$ correspond to $X_i$ and $Y_i.$ The definition of $\mathrm{Index}(m)$ (see \cite[p. 32]{froberg}) is symmetric; hence there is a resolution $(\kK^!\otimes\kK,~d_1+\dots+d_m)$ for the left $\kK^!$-module $\k.$ 

Now we describe the Fr\"oberg differentials $d_1,\dots,d_m$ explicitly. Let $x=u^{(\mu)}\otimes v^{(\nu)}\in \kK^!\otimes\kK$ be a monomial, that is, $x=u_{i_1}\dots u_{i_k}\otimes v_{j_1}\dots v_{j_l}$ for some $i_1,\dots,i_k,j_1,\dots,j_l\in [m]$ (this representation is usually not unique). The element $d_j(x)$ is defined as follows:
\begin{itemize}
\item If $x$ cannot be written as $c\cdot u^{(\mu)}\otimes v_jv^{(\nu')},$ where $c\in\k,$ then $d_j(x):=0;$
\item If $x=c\cdot u^{(\mu)}\otimes v_j v^{(\nu')},$ then $d_j(x):=c\cdot u^{(\mu)}u_i\otimes v^{(\nu')}.$
\end{itemize}
The algebra $\kK$ is commutative, and every monomial $v^{(\nu)}\neq 0$ can be written uniquely as $v^\alpha,$ $\supp\alpha\in\K.$ Therefore,
\begin{align*}
d_j(u^{(\mu)}\otimes v^{\alpha})&=
\begin{cases}0,&j\notin \supp\alpha;\\ u^{(\mu)}u_j\otimes v^{\alpha-e_j},&j\in \supp\alpha;\end{cases}\\
d(u^{(\mu)}\otimes v^\alpha)&=
\sum_{j=1}^m d_j(u^{(\mu)}\otimes v^\alpha)=\sum_{j\in \supp\alpha}u^{(\mu)}u_j\otimes v^{\alpha-e_j}.
\end{align*}
Replacing $v^\alpha\in\kK_{(|\alpha|)}$ with $\chi_\alpha\in\kKc_{(|\alpha|)},$ we obtain the differential \eqref{eqn:froberg_differential}.
\end{proof}

\subsection{The free resolution of the $H_*(\OZK;\k)$-module $\k$}

By Proposition \ref{prp:left_module_isomorphism}, there is an isomorphism of $(\ZZ\times\Zm)$-graded left $H_*(\OZK;\k)$-modules:
$$T:H_*(\OZK;\k)\otimes \Lm\to H_*(\ODJ;\k),\quad a\otimes u_I\mapsto i_*(a)\widehat{u}_I.$$

Consider the $\k$-modules $M_t:=\Lm\otimes \kKc_{(t)}.$ They are $(\ZZ\times\ZZ\times\Zm)$-graded: $$\deg u_I\otimes\chi_\alpha=\deg u_I+\deg\chi_\alpha=(0+|\alpha|,-|I|-|\alpha|,2I+2\alpha)=(t,-|I|-t,2I+2\alpha).$$

\begin{thm}\label{thm:ozk_resolution} Let $\K$ be a flag simplicial complex. Then the left $H_*(\OZK;\k)$-module $\k$ has a free resolution
\begin{equation}
    \label{eqn:ozk_resolution}
    \dots\to
    H_*(\OZK;\k)\otimes M_2\overset{\widehat{d}}\longrightarrow
    H_*(\OZK;\k)\otimes M_1\overset{\widehat{d}}\longrightarrow
    H_*(\OZK;\k)\otimes M_0\overset{\varepsilon}\longrightarrow
    \k\to 0
\end{equation}
with the following differential of degree $(-1,0,0):$
\begin{equation}
    \label{eqn:ozk_differential}
    \widehat{d}:M_t\to \Big(H_*(\OZK;\k)\otimes \underbrace{\Lm\Big)\otimes \kKc_{(t-1)}}_{=M_{t-1}},\quad u_I\otimes \chi_\alpha\mapsto \sum_{j\in \supp\alpha} T^{-1}(\widehat{u}_Iu_j)\otimes \chi_{\alpha-e_j}.
\end{equation}
\end{thm}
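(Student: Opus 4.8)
The plan is to obtain the resolution \eqref{eqn:ozk_resolution} from the Fr\"oberg resolution \eqref{eqn:odj_resolution} by \emph{restriction of scalars} along the algebra inclusion $i_*\colon H_*(\OZK;\k)\hookrightarrow H_*(\ODJ;\k)$. The key input is Proposition \ref{prp:left_module_isomorphism}: the map $T\colon H_*(\OZK;\k)\otimes\Lm\to H_*(\ODJ;\k)$ is an isomorphism of left $H_*(\OZK;\k)$-modules, so $H_*(\ODJ;\k)$ is free over $H_*(\OZK;\k)$. First I would regard each term $H_*(\ODJ;\k)\otimes\kKc_{(t)}$ of \eqref{eqn:odj_resolution} as a left $H_*(\OZK;\k)$-module through $i_*$. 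Since restriction of scalars is exact and the augmentation $\varepsilon$, being $H_*(\ODJ;\k)$-linear, remains $H_*(\OZK;\k)$-linear, the complex \eqref{eqn:odj_resolution} is already a resolution of the left $H_*(\OZK;\k)$-module $\k$; what remains is to rewrite its terms and differential in the asserted form.

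Next I would rewrite the terms. Tensoring the isomorphism $T$ over $\k$ with $\id_{\kKc_{(t)}}$ produces graded isomorphisms of left $H_*(\OZK;\k)$-modules
$$\Phi_t\colon H_*(\OZK;\k)\otimes M_t = H_*(\OZK;\k)\otimes\Lm\otimes\kKc_{(t)}\overset{\cong}{\longrightarrow}H_*(\ODJ;\k)\otimes\kKc_{(t)},$$
where $M_t=\Lm\otimes\kKc_{(t)}$ is a free $\k$-module of finite type. Hence the restricted complex is the complex of free left $H_*(\OZK;\k)$-modules \eqref{eqn:ozk_resolution}, with differential $\widehat d:=\Phi_{t-1}^{-1}\circ d\circ\Phi_t$. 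In the present $(\ZZ\times\ZZ\times\Zm)$-grading, $H_*(\OZK;\k)$, $\Lm$ and $H_*(\ODJ;\k)$ all sit in homological degree $0$, while $\kKc_{(t)}$ sits in homological degree $t$; so each $\Phi_t$ is degree-preserving, and therefore $\widehat d$ has degree $(-1,0,0)$ because $d$ does.

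Finally I would check that $\widehat d$ is given by \eqref{eqn:ozk_differential}. Since $\widehat d$ is $H_*(\OZK;\k)$-linear, it is determined by its values on the generators $1\otimes(u_I\otimes\chi_\alpha)$ of the free module $H_*(\OZK;\k)\otimes M_t$. Here $\Phi_t\bigl(1\otimes(u_I\otimes\chi_\alpha)\bigr)=\widehat u_I\otimes\chi_\alpha$; applying the $H_*(\ODJ;\k)$-linear differential \eqref{eqn:froberg_differential} gives
$$d(\widehat u_I\otimes\chi_\alpha)=\widehat u_I\cdot d(1\otimes\chi_\alpha)=\sum_{j\in\supp\alpha}(\widehat u_Iu_j)\otimes\chi_{\alpha-e_j},$$
and applying $\Phi_{t-1}^{-1}$ yields $\sum_{j\in\supp\alpha}T^{-1}(\widehat u_Iu_j)\otimes\chi_{\alpha-e_j}$, which is exactly \eqref{eqn:ozk_differential}.

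I do not expect a genuine obstacle here: the argument is purely formal, and the only points needing care are that $T$ is left $H_*(\OZK;\k)$-linear (Proposition \ref{prp:left_module_isomorphism}) and that the Fr\"oberg differential \eqref{eqn:froberg_differential} is $H_*(\ODJ;\k)$-linear, so that the composite $\Phi_{t-1}^{-1}\circ d\circ\Phi_t$ is $H_*(\OZK;\k)$-linear and hence pinned down by its restriction to $1\otimes M_t$. It is worth recording, for the sequel, that \eqref{eqn:ozk_resolution} is \emph{not} minimal in general: already a generator with $I=\varnothing$ satisfies $\widehat d\bigl(1\otimes(u_\varnothing\otimes\chi_\alpha)\bigr)=\sum_{j\in\supp\alpha}(1\otimes u_j)\otimes\chi_{\alpha-e_j}$, which does not lie in $H_*(\OZK;\k)^{+}\otimes M_{t-1}$. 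Consequently, computing $\Tor^{H_*(\OZK;\k)}(\k,\k)$ from this resolution still requires taking homology of $\k\otimes_{H_*(\OZK;\k)}\bigl(H_*(\OZK;\k)\otimes M_\bullet\bigr)$, which is the real work carried out in the proof of Theorem \ref{thm:tor_ozk_answer}.
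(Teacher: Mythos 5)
Your argument is correct and is essentially the paper's own proof: both use that $H_*(\ODJ;\k)$ is free over $H_*(\OZK;\k)$ (Proposition \ref{prp:left_module_isomorphism}) to view the Fr\"oberg resolution \eqref{eqn:odj_resolution} as a free resolution over the subalgebra, and both transport the differential through $T\otimes\id,$ your conjugation-by-$\Phi_t$ computation on generators being the same check as the paper's commutative-diagram verification using the $H_*(\OZK;\k)$-linearity of $T.$ Your closing observation that the resolution is not minimal, so that $\Tor^{H_*(\OZK;\k)}(\k,\k)$ still requires taking homology after applying $\k\otimes_{H_*(\OZK;\k)}(-),$ is also exactly how the paper proceeds (Proposition \ref{prp:ozk_tor_calculation}).
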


\begin{proof}
The chain complex \eqref{eqn:odj_resolution} is a free resolution of the left ${H_*(\ODJ;\k)}$-module $\k.$ The algebra ${H_*(\ODJ;\k)}$ is itself a free left ${H_*(\OZK;\k)}$-module. It follows that \eqref{eqn:odj_resolution} is a free resolution of the left $H_*(\OZK;\k)$-module~$\k.$

Now consider the following sequence of isomorphisms of left $H_*(\OZK;\k)$-modules:
$$\xymatrix{
H_*(\OZK;\k)\otimes M_t\ar@{=}[r] &
H_*(\OZK;\k)\otimes \Lm\otimes \kKc_{(t)}\ar[r]^-{T\otimes\id} &
H_*(\ODJ;\k)\otimes \kKc_{(t)}.
}$$
We will prove that it defines an isomorphism between the chain complexes \eqref{eqn:ozk_resolution} and \eqref{eqn:odj_resolution}:
$$\xymatrix{(H_*(\OZK;\k)\otimes M,\widehat{d})\ar[r]^-{T\otimes\id}&(H_*(\ODJ;\k)\otimes \kKc,d).}$$
It is sufficient to show that the diagram
$$\xymatrix{
H_*(\OZK;\k)\otimes M_t\ar[r]^-{\widehat{d}}\ar[d]_-{T\otimes\id}^-\cong & H_*(\OZK;\k)\otimes M_{t-1}\ar[d]_-{T\otimes\id}^-\cong \\
H_*(\ODJ;\k)\otimes \kKc_{(t)}\ar[r]^-{d} & H_*(\ODJ;\k)\otimes \kKc_{(t-1)}
}$$ is commutative.
For $x\in H_*(\OZK;\k)$ and $u_I\otimes \chi_\alpha\in \Lm\otimes\kKc_{(t)}= M_t,$ we have
$$\xymatrix{
x\otimes (u_I\otimes \chi_\alpha)\ar@{|->}[r]\ar@{|->}[d] & \sum_{j\in \supp\alpha} xT^{-1}(\widehat{u}_Iu_j)\otimes \chi_{\alpha-e_j}\\
i_*(x)\widehat{u}_I\otimes \chi_\alpha\ar@{|->}[r] & \sum_{j\in \supp\alpha} i_*(x)\widehat{u}_Iu_j\otimes \chi_{\alpha-e_j}.
}$$
But $T(xT^{-1}(\widehat{u}_Iu_j))=i_*(x) T(T^{-1}(\widehat{u}_Iu_j))=i_*(x)\widehat{u}_Iu_j,$ as $T$ is $H_*(\OZK;\k)$-linear and $H_*(\OZK;\k)$ acts on $H_*(\ODJ;\k)$ via $i_*.$
\end{proof}

\begin{rmk}
The resolution~\eqref{eqn:ozk_resolution} can be used to obtain a presentation of $H_*(\OZK;\k)$. However, an explicit description of $T^{-1}$ requires involved commutator calculus. Also, to obtain a presentation using assertion 2 of  Proposition~\ref{prp:homological}, one needs a resolution of the form $\dots\to H_*(\OZK;\k)\to \k\to 0$ instead of $$\dots\to H_*(\OZK;\k)\otimes\Lm\to\k\to 0.$$
\end{rmk}

\subsection{Proof of Theorem~\ref{thm:tor_ozk_answer} and a corollary to it}
\begin{prp}\label{prp:ozk_tor_calculation} Let $\K$  be a flag complex. Then
$\Tor^{H_*(\OZK;\k)}(\k,\k)\simeq H(M,\overline{d}),$ where $\overline{d}$ is defined as
$$\overline{d}: M_t\to M_{t-1},\quad u_I\otimes \chi_\alpha\mapsto \sum_{j\in \supp\alpha} (u_{I}\wedge u_j)\otimes \chi_{\alpha-e_j}.$$
\end{prp}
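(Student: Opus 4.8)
The plan is to compute $\Tor^{H_*(\OZK;\k)}(\k,\k)$ directly from the free resolution \eqref{eqn:ozk_resolution} of Theorem \ref{thm:ozk_resolution} (available since $\K$ is flag) by applying the functor $\k\otimes_{H_*(\OZK;\k)}(-)$. Each term of that resolution is the free module $H_*(\OZK;\k)\otimes M_t$, so the functor produces $\k\otimes_{H_*(\OZK;\k)}\bigl(H_*(\OZK;\k)\otimes M_t\bigr)\cong M_t$, and hence $\Tor^{H_*(\OZK;\k)}(\k,\k)\cong H(M_\bullet,\overline{d})$ where $\overline{d}$ is the differential induced by $\widehat{d}$. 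Everything then reduces to identifying this induced differential explicitly.

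First I would observe that, under the identification above, the map induced by $\widehat{d}$ on $M_t$ is $(\varepsilon\otimes\id_{M_{t-1}})\circ\widehat{d}$; this is well defined because $T$, and therefore $T^{-1}$, is $H_*(\OZK;\k)$-linear by Proposition \ref{prp:left_module_isomorphism}. From formula \eqref{eqn:ozk_differential}, this induced map sends $u_I\otimes\chi_\alpha$ to $\sum_{j\in\supp\alpha}(\varepsilon\otimes\id)\bigl(T^{-1}(\widehat{u}_Iu_j)\bigr)\otimes\chi_{\alpha-e_j}$, so the whole computation comes down to understanding the composite $(\varepsilon\otimes\id_{\Lm})\circ T^{-1}\colon H_*(\ODJ;\k)\to\k\otimes\Lm\cong\Lm$.

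The key observation is that this composite is exactly $p_*$. Indeed, the commutative square \eqref{eqn:T_commutative} of Proposition \ref{prp:left_module_isomorphism} states that $p_*\circ T$ equals $(\varepsilon\otimes\id)$ followed by the canonical isomorphism $\k\otimes\Lm\cong\Lm$; precomposing with $T^{-1}$ gives $p_*=(\varepsilon\otimes\id)\circ T^{-1}$ under that identification. Since $p_*$ is a morphism of $\k$-algebras with $p_*(u_i)=u_i$ (as recalled just before Proposition \ref{prp:left_module_isomorphism}), we obtain $p_*(\widehat{u}_Iu_j)=p_*(u_{i_1})\cdots p_*(u_{i_k})p_*(u_j)=u_{i_1}\wedge\cdots\wedge u_{i_k}\wedge u_j=u_I\wedge u_j$. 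Substituting back yields $\overline{d}(u_I\otimes\chi_\alpha)=\sum_{j\in\supp\alpha}(u_I\wedge u_j)\otimes\chi_{\alpha-e_j}$, exactly the formula in the statement.

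I do not expect a serious obstacle here: the argument is formal once Theorem \ref{thm:ozk_resolution} and the commutativity of \eqref{eqn:T_commutative} are in hand. The only points deserving care are the bookkeeping of the $(\ZZ\times\ZZ\times\Zm)$-grading — so that $\overline{d}$ indeed has degree $(-1,0,0)$ and the graded refinement claimed later in Theorem \ref{thm:tor_ozk_answer} can be read off the multidegrees $\deg(u_I\otimes\chi_\alpha)$ — and checking that $\overline{d}^2=0$ is inherited from $\widehat{d}^{\,2}=0$, which is automatic since passing to $\k\otimes_{H_*(\OZK;\k)}(-)$ is functorial.
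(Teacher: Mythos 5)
Your argument is correct and is essentially the paper's own proof: apply $\k\otimes_{H_*(\OZK;\k)}(-)$ to the resolution \eqref{eqn:ozk_resolution}, and use the commutative square \eqref{eqn:T_commutative} to identify $(\varepsilon\otimes\id)\circ T^{-1}$ with $p_*$, so that $p_*(\widehat{u}_Iu_j)=u_I\wedge u_j$ gives the stated formula for $\overline{d}$. No gaps; the grading and $\overline{d}^{\,2}=0$ remarks are fine and match the paper's treatment.
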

\begin{proof}
We apply the functor $\k\otimes_{H_*(\OZK;\k)} (-)$  to the resolution \eqref{eqn:ozk_resolution} and obtain a differential $M_t\to M_{t-1},$
$$u_I\otimes \chi_\alpha\mapsto ((\varepsilon\otimes\id)\otimes\id)\left(\sum_{j\in \supp\alpha} T^{-1}(\widehat{u}_Iu_j)\otimes \chi_{\alpha-e_j}\right)=\sum_{j\in \supp\alpha}(\varepsilon\otimes\id)(T^{-1}(\widehat{u}_Iu_j))\otimes \chi_{\alpha-e_j}.$$ The homology of this complex is isomorphic to $\Tor^{H_*(\OZK;\k)}(\k,\k)$ by definition.

It follows from \eqref{eqn:T_commutative} that
$(\varepsilon\otimes\id)(T^{-1}(\widehat{u}_Iu_j))=p_*(\widehat{u}_Iu_j)=u_I\wedge u_j.$
Hence
\[\sum_{j\in \supp\alpha}(\varepsilon\otimes\id)(T^{-1}(\widehat{u}_Iu_j))\otimes \chi_{\alpha-e_j}=\overline{d}(u_I\otimes \chi_\alpha).\qedhere
\]
\end{proof}

\begin{proof}[Proof of Theorem \ref{thm:tor_ozk_answer}]
We first consider the \emph{Koszul algebra} of the Stanley-Reisner ring. It is the following $(\ZZ\times\ZZ\times\ZZ^m)$-graded algebra supplied with a differential of degree $(1,0,0):$
$$(\Lambda[u_1,\dots,u_m]\otimes\kK,d),~\deg u_i=(0,-1,2e_i),~\deg v_i=(1,-1,2e_i);~du_i=v_i,~dv_i=0.$$
The differential is extended on the additive basis by the Leibniz rule: $$d=\sum_{i=1}^m \frac{\partial}{\partial u_i}\otimes v_i.$$

Now consider the dual coalgebra $(\Lambda[u_1,\dots,u_m]\otimes\kKc,\delta)$  with $$\delta = \sum_{i=1}^m u_i\otimes\frac{\partial}{\partial\chi_i}.$$
This differential coincides with $\overline{d}.$ Therefore, $\Tor^{H_*(\OZK;\k)}(\k,\k)$ is the homology of the $\k$-dual complex to the Koszul algebra.

Recall the calculation of the cohomology of the Koszul algebra. By \cite[Lemma 3.2.6]{ToricTopology}, factoring out the homogeneous acyclic ideal $(u_iv_i,v_i^2)$ gives a quasi-isomorphism $$(\Lm\otimes\kK,d)\to (\Lm\otimes\kK,d)/(u_iv_i=v_i^2=0)=:R(\K).$$
As shown in the proof of Theorem 3.2.9 in \cite{ToricTopology} there is an isomorphism of chain complexes
$$\widetilde{C}^*(\K_J;\k)\overset\cong\longrightarrow R^{*+1,-|J|,2J}(\K),~I\mapsto \pm u_{J\setminus I}\otimes v^I,$$
for every $J\subset[m].$ Moreover, the direct sum is an isomorphism
$$\bigoplus_{J\subset[m]}\widetilde{C}^*(\K_J;\k)\overset\cong\longrightarrow R(\K).$$
By passing to $\k$-dual complexes and then to homology, we obtain the isomorphisms $$\Tor_{n+1}^{H_*(\OZK;\k)}(\k,\k)_{-|J|,2J}\cong H_{n}(R^{*+1,-|J|,2J}(\K)^\#)\overset\cong\longrightarrow\H_n(\K_J;\k).$$ Their direct sum gives the required isomorphism
\[
\Tor^{H_*(\OZK;\k)}_{n+1}(\k,\k)\overset\cong\longrightarrow \bigoplus_{J\subset[m]}\H_n(\K_J;\k).
\qedhere
\]
\end{proof}

To simplify the notation, denote by $\dim M$ the minimal number of generators of a $\k$-module $M.$

\begin{crl}
\label{crl:number_of_gen_and_rel}
Let $\K$ be a flag simplicial complex and $\k$ be a commutative ring with unit. Then
\begin{enumerate}
\item[(a)] if $\k$ is a PID, any presentation of $H_*(\OZK;\k)$ contains at least ${\sum_{J\subset[m]}\dim\widetilde{H}_0(\K_J;\k)}$ generators and ${\sum_{J\subset[m]}\dim\widetilde{H}_1(\K_J;\k)}$ relations;
\item[(b)] if $\k=\FF$ is a field, the minimal presentation of the algebra $H_*(\OZK;\FF)$ contains exactly ${\sum_{J\subset[m]}\dim\widetilde{H}_0(\K_J;\FF)}$ generators and ${\sum_{J\subset[m]}\dim\widetilde{H}_1(\K_J;\FF)}$ relations.
\end{enumerate}
\end{crl}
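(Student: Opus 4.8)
The plan is to derive Corollary~\ref{crl:number_of_gen_and_rel} directly from Theorem~\ref{thm:tor_ozk_answer} together with Propositions~\ref{prp:homological} and~\ref{prp:homological_for_rings}. The bridge between ``number of generators/relations'' and the $\Tor$-modules is already in place: by Proposition~\ref{prp:homological}(2), the minimal presentation of a connected graded $\FF$-algebra $A$ has $\dim\Tor_1^A(\FF,\FF)$ generators and $\dim\Tor_2^A(\FF,\FF)$ relations, and by Proposition~\ref{prp:homological_for_rings}, over a PID $\k$ every presentation has at least $\dim\Tor_1^A(\k,\k)$ generators and $\dim\Tor_2^A(\k,\k)$ relations (here $\dim$ denotes minimal number of $\k$-module generators, matching our newly adopted notation). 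So both assertions reduce to computing $\dim\Tor_1^{H_*(\OZK;\k)}(\k,\k)$ and $\dim\Tor_2^{H_*(\OZK;\k)}(\k,\k)$.

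First I would apply Theorem~\ref{thm:tor_ozk_answer} with $n=1$ and $n=2$, obtaining
$$\Tor_1^{H_*(\OZK;\k)}(\k,\k)\cong\bigoplus_{J\subset[m]}\widetilde{H}_0(\K_J;\k),\qquad \Tor_2^{H_*(\OZK;\k)}(\k,\k)\cong\bigoplus_{J\subset[m]}\widetilde{H}_1(\K_J;\k).$$
Then I would take minimal numbers of generators of both sides. Since $\dim$ is additive over finite direct sums of $\k$-modules, $\dim\Tor_n^{H_*(\OZK;\k)}(\k,\k)=\sum_{J\subset[m]}\dim\widetilde{H}_{n-1}(\K_J;\k)$, which is exactly the count appearing in the statement. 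For part~(b), with $\k=\FF$ a field, $\dim$ is just $\dim_\FF$, and the equality is immediate; for part~(a), with $\k$ a PID, the inequality in Proposition~\ref{prp:homological_for_rings} gives the ``at least'' bounds. One should also note that $H_*(\OZK;\k)$ is indeed a connected multigraded $\k$-algebra which is free of finite type as a $\k$-module (so that the cited propositions apply): this follows because $H_*(\OZK;\k)$ embeds as a subalgebra of $H_*(\ODJ;\k)\cong\kK^!$ by Theorem~\ref{thm:hodj_description}, and $\kK^!$ is free of finite type with the required connectivity by its presentation, while the $\Zm$-grading restricts to the subalgebra. Alternatively the free module structure follows from Proposition~\ref{prp:left_module_isomorphism}.

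I expect no serious obstacle here — the real work was done in proving Theorem~\ref{thm:tor_ozk_answer}. The only point requiring a little care is the compatibility of the notion ``minimal number of generators'' with the direct sum decomposition over a PID: for a finitely generated module over a PID this is standard (the module splits as $\k^r\oplus(\text{torsion})$ and the minimal number of generators is additive), but one should make sure the reduced homology groups $\widetilde{H}_*(\K_J;\k)$ are finitely generated, which holds since $\K$ is a finite complex. A secondary subtlety worth a sentence: Proposition~\ref{prp:homological_for_rings} is stated degree-by-degree in the $\Zm$-grading, so strictly it gives $\#\{a_i:\deg a_i=\alpha\}\geq N_{1,\alpha}$ and $\#\{r_j:\deg r_j=\alpha\}\geq N_{2,\alpha}$; summing over all multidegrees $\alpha$ yields the total bounds claimed in~(a), and by Theorem~\ref{thm:tor_ozk_answer} the nonzero contributions come precisely from the multidegrees $(-|J|,2J)$, grouping the summands by the subset $J$.
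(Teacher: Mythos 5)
Your proposal matches the paper's own proof: assertion (b) is deduced from Theorem~\ref{thm:tor_ozk_answer} together with assertion 2 of Proposition~\ref{prp:homological}, and assertion (a) from Theorem~\ref{thm:tor_ozk_answer} together with Proposition~\ref{prp:homological_for_rings}, exactly as you argue. Your extra remarks on summing the degree-by-degree bounds over multidegrees and on additivity of the minimal number of generators are correct fillings-in of details the paper leaves implicit.
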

\begin{proof}
Assertion (a) follows from Theorem~\ref{thm:tor_ozk_answer} and Proposition \ref{prp:homological_for_rings}.\\
Assertion (b) follows from Theorem~\ref{thm:tor_ozk_answer} and assertion 2 of Proposition \ref{prp:homological}.
\end{proof}

\begin{rmk}
The algebra $H_*(\OZK;\FF)$ is $(\ZZ\times\Zm)$-graded. This implies that the $\FF$-modules $\Tor_i^{H_*(\OZK;\FF)}(\FF,\FF)$ are $(\ZZ\times\Zm)$-graded. Hence, for every $J\subset[m],$ the corresponding $\dim\H_1(\K_J;\FF)$ relations in $H_*(\OZK;\FF)$ have degree $(-|J|,2J)\in\ZZ\times\Zm.$ Similarly, if $\k$ is a PID, then there are at least $\dim\H_1(\K_J;\k)$ relations of this degree in any presentation of $H_*(\OZK;\k).$

Note that the number of relations depends on $\k,$ since the number $\dim\H_1(\K_J;\k)$ does. For example, let $\K$ be a flag triangulation of $\RR\mathrm{P}^2.$ Then, in degree $(-m,2[m]),$ there are
\begin{itemize}
\item at least one relation in $H_*(\OZK;\ZZ);$
\item exactly one relation in $H_*(\OZK;\ZZ/2);$
\item no relations in $H_*(\OZK;\FF)$ if $\chr\FF\neq 2.$
\end{itemize}
It is natural to expect the following: if $\K$ is a flag triangulation of $\RR\mathrm{P}^2,$ then there is a relation $r=0$ in $H_*(\OZK;\k)$ of degree $(-m,2[m]),$ such that $2r=0$ follows from relations of smaller degree.
\end{rmk}

\begin{rmk}
\label{rmk:work_of_cai}
Recently, Li Cai \cite{lie_cai} obtained a method that produces a relation of degree $(-|J|, 2J)$ in $H_*(\OZK;\k),$ given a subset $J\subset[m]$ and a \emph{free simplicial loop} in $\K_J.$ We expect that
\begin{itemize}
\item the relation depends only on the homotopy class of the simplicial loop;
\item taking the generating loops for the fundamental groups of connected components of full subcomplexes, one obtains a sufficient set of relations.  
\end{itemize}
This expectation is based on the following evidence. Over a field, the ``module of minimal relations'' in $H_*(\OZK;\FF)$ is isomorphic to $\bigoplus_{J\subset[m]}\H_1(\K_J;\FF).$  But the first homology of any $\K_J$ is generated by the Hurewicz images of generating loops for fundamental groups of connected components.

Another argument is the analogy between the presentations of $H_*(\OZK;\k)$ and $\pi_1(\RK)$ (see \cite{pv, onerelator}). As shown by Li Cai, the analogous set of relations in $\pi_1(\RK)$ is sufficient by the van Kampen theorem.
\end{rmk}

\subsection{Multigraded Poincar\'e series of Pontryagin algebras and Lie superalgebras}
\label{subsection:pbw_use}

\begin{thm}
\label{thm:poincare_series}
Let $\K$ be a flag simplicial complex and $\FF$ be a field. Then
\begin{equation}
\label{eqn:poincare_series}
    \frac{1}{F(H_*(\OZK;\FF);t,\lambda)}=
    -\sum_{J\subset[m]}\widetilde{\chi}(\K_J)t^{-|J|}\lambda^{2J}.
\end{equation}
\end{thm}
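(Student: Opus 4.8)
The plan is to combine assertion~3 of Proposition~\ref{prp:homological} with the multigraded computation of $\Tor^{H_*(\OZK;\FF)}(\FF,\FF)$ supplied by Theorem~\ref{thm:tor_ozk_answer}. Set $A:=H_*(\OZK;\FF)$; this is a multigraded $\FF$-algebra, being the subalgebra of $H_*(\ODJ;\FF)$ carrying the $(\ZZ\times\Zm)$-grading, connected, and of finite type, and it is concentrated in degrees $\Zl\times\Zm$, so $F(A;t,\lambda)\in\ZZ[[t^{-1},\lambda]]$ has invertible constant term $1$. First I would apply Proposition~\ref{prp:homological}(3) to obtain the identity
\[
\frac{1}{F(A;t,\lambda)}=\sum_{n=0}^\infty(-1)^nF\bigl(\Tor_n^A(\FF,\FF);t,\lambda\bigr)
\]
in $\ZZ[[t^{-1},\lambda]]$.

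Next I would substitute Theorem~\ref{thm:tor_ozk_answer}. It gives $\Tor_n^A(\FF,\FF)_{-|J|,2J}\cong\H_{n-1}(\K_J;\FF)$ for every $J\subset[m]$, all other multigraded components being zero; since the support of $2J\in\Zm$ recovers $J$, distinct subsets occupy distinct multidegrees and hence
\[
F\bigl(\Tor_n^A(\FF,\FF);t,\lambda\bigr)=\sum_{J\subset[m]}\dim_\FF\H_{n-1}(\K_J;\FF)\cdot t^{-|J|}\lambda^{2J}.
\]
As each $\K_J$ is a finite complex, only finitely many $n$ contribute for a fixed $J$, so the resulting double series is a bona fide element of $\ZZ[[t^{-1},\lambda]]$ whose terms may be summed in any order. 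Interchanging the two sums and re-indexing by $k=n-1$ turns the inner sum into $-\widetilde{\chi}(\K_J)$, the negative of the reduced Euler characteristic $\widetilde{\chi}(\K_J)=\sum_{k\geq -1}(-1)^k\dim_\FF\H_k(\K_J;\FF)$ --- with the $k=-1$ term occurring only for $J=\varnothing$, where $\H_{-1}(\K_\varnothing;\FF)\cong\FF$ matches the contribution of $\Tor_0^A(\FF,\FF)\cong\FF$ --- which produces exactly~\eqref{eqn:poincare_series}.

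I do not expect a real obstacle: once Theorem~\ref{thm:tor_ozk_answer} is in hand the argument is purely formal power-series manipulation. The only delicate points are bookkeeping ones: confirming that the $\ZZ$-grading on $A$ inherited from $H_*(\ODJ;\FF)$ places the $J$-summand of $\Tor_n^A(\FF,\FF)$ in $t$-degree $-|J|$ and $\lambda$-multidegree $2J$, and that the index shift $k=n-1$ from the homological degree to the Euler-characteristic degree yields precisely the overall minus sign in~\eqref{eqn:poincare_series}. As a byproduct one may observe that the right-hand side of~\eqref{eqn:poincare_series} is a polynomial in $t^{-1}$ and $\lambda_1,\dots,\lambda_m$, so in the flag case $F(H_*(\OZK;\FF);t,\lambda)$ is the reciprocal of a polynomial.
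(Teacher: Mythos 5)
Your proposal is correct and follows essentially the same route as the paper: apply assertion~3 of Proposition~\ref{prp:homological}, substitute the multigraded computation of $\Tor_n^{H_*(\OZK;\FF)}(\FF,\FF)$ from Theorem~\ref{thm:tor_ozk_answer}, and interchange the sums so that the alternating sum over $n$ assembles into $-\widetilde{\chi}(\K_J)$ for each $J$. The bookkeeping points you flag (the multidegree $(-|J|,2J)$ of the $J$-summand and the sign coming from the shift $k=n-1$, including the $J=\varnothing$ term matching $\Tor_0\cong\FF$) are handled the same way in the paper's proof.
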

\begin{proof}
It follows from Theorem~\ref{thm:tor_ozk_answer} that
$$F\left(\Tor_n^{H_*(\OZK;\FF)}(\FF,\FF);t,\lambda\right)=
\sum_{J\subset[m]}\dim_\FF \H_{n-1}(\K_J;\FF)t^{-|J|}\lambda^{2J}.$$

By assertion 3 of Proposition \ref{prp:homological},
\begin{align*}
\frac{1}{F(H_*(\OZK;\FF);t,\lambda)}&=\sum_{n=0}^\infty (-1)^n F\left(\Tor_n^{H_*(\OZK;\FF)}(\FF,\FF);t,\lambda\right)
=\\&
=\sum_{J\subset [m]}\sum_{n=0}^\infty (-1)^n \dim_\FF \H_{n-1}(\K_J)t^{-|J|}\lambda^{2J}=-\sum_{J\subset[m]}\widetilde{\chi}(\K_J)t^{-|J|}\lambda^{2J}.\qedhere
\end{align*}
\end{proof}

\begin{rmk}
Let $n=\dim\K.$ The \emph{$h$-numbers} of $\K$ are defined by the identity $$\sum_{i=0}^n h_i s^{n-i}=\sum_{I\in\K} (s-1)^{n-|I|}.$$
The following formula for the $\ZZ$-graded Poincar\'e series of $H_*(\OZK;\FF)$  follows from the results of Panov and Ray \cite{pr} (see \cite[Proposition 8.5.4]{ToricTopology}):
$$\frac{1}{F(H_*(\OZK;\FF);t)}=(1+t)^{m-n}\sum_{i=0}^n h_i\cdot(-t)^i.$$
(The $\ZZ$-graded Poincar\'e series $F(V;t)$ is obtained from the multigraded series $F(V;t,\lambda)$ by the substitution $t=\lambda_1=\dots=\lambda_m.$)
We show sketchily that this formula agrees with \eqref{eqn:poincare_series}.  We need to check that
\begin{equation}
\label{eqn:check_poincare}
(1+t)^{m-n}\sum_{i=0}^n h_i\cdot(-t)^i=-\sum_{J\subset[m]}\widetilde{\chi}(\K_J)t^{|J|}.
\end{equation}

First, we substitute $s=-1/t$ in the definition of $h$-numbers and obtain
$$
(1+t)^{-n}\sum_{i=0}^n h_i\cdot(-t)^i 
=
\sum_{I\in\K}(-t)^{|I|}(1+t)^{-|I|}.
$$
On the other hand, $-\widetilde{\chi}(\K_J)=\sum_{I\in\K_J}(-1)^{|I|}$ via simplicial homology. Hence \eqref{eqn:check_poincare} is equivalent to the following identity, which is easy to verify by changing the order of summation:
\[
\sum_{J\subset[m]}\sum_{I\in\K_J}(-1)^{|I|}t^{|J|}=\sum_{I\in\K}(-1)^{|I|}\cdot t^{|I|}(1+t)^{m-|I|}.
\qed
\]
\end{rmk}
~\\

Now we recall that, by the Milnor-Moore theorem \cite{milnor_moore}, the algebra $H_*(\OZK;\QQ)$ is the universal enveloping of the graded Lie superalgebra $\pi_*(\OZK)\otimes\QQ.$ Hence $\pi_*(\OZK)\otimes\QQ$ is $(\ZZ\times\Zm)$-graded:
$$\pi_n(\OZK)\otimes\QQ=\bigoplus_{n=-i+2|\alpha|}(\pi_*(\OZK)\otimes\QQ)_{-i,2\alpha},\quad (\pi_*(\OZK)\otimes\QQ)_{-i,2\alpha}\subset H_{-i,2\alpha}(\OZK;\QQ).$$

\begin{rmk}
A natural question is whether $\pi_*(\OZK)$ admits a natural $\ZZ\times\Zm$-grading.

If $\K$ is a flag complex, it may be possible to introduce such grading using the homotopy decomposition of $\OZK$ from \cite[Corollary 7.3]{pt}. The following argument serves as a motivation: by \cite[Theorem 1.1]{pt}, $\OZK$ is a homotopy retract of $\Omega\Z_{\overline{\K}},$ where $\overline{\K}$ is the disjoint union of $m$ points. But $\Z_{\overline{\K}}$ is a wedge of spheres, so an appropriate multigrading on the homotopy groups may be provided by the Hilton-Milnor theorem (see the proof of \cite[Theorem 5.3(c)]{gptw}). However, as in Remark \ref{rmk:panov_theriault_argument}, we do not know if the homotopy section from \cite{pt} respects the multigrading.
\end{rmk}

Now we use Theorem \ref{thm:poincare_series} to calculate the multigraded Poincar\'e series of $\pi_*(\OZK)\otimes\QQ$ for flag $\K.$ Let $L$ be a connected $(\ZZ\times\Zm)$-graded Lie superalgebra and $U(L)$ be its universal enveloping algebra. It is assumed that the first grading is either always positive or always negative, so we can divide power series and take logarithms. Define the coefficients $l_{i,\alpha}$ and $u_{j,\beta}$ by the identities
$$F(L;t,\lambda)=\sum_{i,\alpha} l_{i,\alpha}t^i\lambda^\alpha,\quad \ln F(U(L);t,\lambda)=\sum_{j,\beta} u_{j,\beta}t^j\lambda^\beta.$$
Define also
$$F(L;-t,-\lambda)=\sum_{i,\alpha} \widetilde{l}_{i,\alpha}t^i\lambda^\alpha,\quad \ln F(U(L);-t,-\lambda)=\sum_{j,\beta} \widetilde{u}_{j,\beta}t^j\lambda^\beta.$$
It is clear that $\widetilde{l}_{i,\alpha}=(-1)^{i+|\alpha|}l_{i,\alpha}$ and $\widetilde{u}_{j,\beta}=(-1)^{j+|\beta|}u_{j,\beta}.$

\begin{lmm}
The Poincar\'e series have the following properties:
\begin{enumerate}
\item $F(V_1\oplus V_2;t,\lambda)=F(V_1;t,\lambda)+F(V_2;t,\lambda);$
\item $F(V_1\otimes V_2;t,\lambda)=F(V_1;t,\lambda)\cdot F(V_2;t,\lambda).$
\end{enumerate}
\end{lmm}
\begin{proof}
If $\{v_i\}_{i\in I}$ is a basis of a vector space $V$ and $\deg v_i=(a_i,b_i)$ then $F(V;t,\lambda)=\sum_{i\in I}t^{a_i}\lambda^{b_i}.$ Property (1) follows from the fact that the union of bases is a basis of the direct sum, and property (2), from the fact that the set of pairwise tensor products of basic vectors is a basis of the tensor product.
\end{proof}

\begin{thm}[\cite{milnor_moore}]\label{thm:pbw_crl}
$$F(U(L);t,\lambda)=\prod_{i,\alpha} (1-(-t)^i(-\lambda)^\alpha)^{(-1)^{i+|\alpha|+1}l_{i,\alpha}}.$$
\end{thm}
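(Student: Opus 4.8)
The plan is to deduce the identity from the Poincar\'e--Birkhoff--Witt theorem for $\ZZ/2$-graded Lie algebras together with the additivity and multiplicativity of Poincar\'e series (the preceding Lemma). Throughout, the $\ZZ/2$-grading on $L$ is the one induced by the total grading: an element of degree $(i,\alpha)\in\ZZ\times\Zm$ is \emph{even} if $i+|\alpha|$ is even and \emph{odd} otherwise, so that each homogeneous component $L_{i,\alpha}$ is either entirely even or entirely odd. This is exactly the parity under which the signs in the statement will encode ``polynomial generator'' in the even case and ``exterior generator'' in the odd case.

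First I would choose a homogeneous $\QQ$-basis of $L$, separated into even elements $\{x_s\}$ and odd elements $\{y_r\}$. By the PBW theorem over $\QQ$ (the structural half of the Milnor--Moore theorem \cite{milnor_moore}), the ordered monomials $\prod_s x_s^{m_s}\cdot\prod_r y_r^{n_r}$ with $m_s\in\Zg$ and $n_r\in\{0,1\}$ form a $\QQ$-basis of $U(L)$; equivalently, there is an isomorphism of $(\ZZ\times\Zm)$-graded $\QQ$-vector spaces
$$U(L)\;\cong\;\Big(\bigotimes_s\QQ[x_s]\Big)\otimes\Big(\bigotimes_r\Lambda[y_r]\Big).$$
The bound $n_r\le 1$ on odd generators is where characteristic zero is used: for odd $y$ one has $y^2=\tfrac{1}{2}[y,y]$ in $U(L)$, a $\QQ$-combination of shorter PBW monomials, so $y^2$ is not a new basis element.

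Next I would compute $F(U(L);t,\lambda)$ by applying parts (1) and (2) of the preceding Lemma to this decomposition: a factor $\QQ[x_s]$ with $\deg x_s=(i,\alpha)$ contributes $\sum_{k\ge 0}(t^i\lambda^\alpha)^k=(1-t^i\lambda^\alpha)^{-1}$, and a factor $\Lambda[y_r]$ with $\deg y_r=(i,\alpha)$ contributes $1+t^i\lambda^\alpha$. Since all $l_{i,\alpha}=\dim L_{i,\alpha}$ basis vectors in a fixed degree $(i,\alpha)$ have the same parity, that of $i+|\alpha|$, grouping the factors by degree gives
$$F(U(L);t,\lambda)=\prod_{i+|\alpha|\ \mathrm{even}}(1-t^i\lambda^\alpha)^{-l_{i,\alpha}}\cdot\prod_{i+|\alpha|\ \mathrm{odd}}(1+t^i\lambda^\alpha)^{l_{i,\alpha}},$$
an honest element of the relevant power series ring (for instance $\QQ[[t^{-1},\lambda]]$ when the first grading is negative) because $L$ is connected of finite type with total grading of constant sign, so only finitely many factors affect each coefficient. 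Finally, from $(-t)^i(-\lambda)^\alpha=(-1)^{i+|\alpha|}t^i\lambda^\alpha$ one sees that $1-(-t)^i(-\lambda)^\alpha$ equals $1-t^i\lambda^\alpha$ when $i+|\alpha|$ is even and $1+t^i\lambda^\alpha$ when it is odd, while $(-1)^{i+|\alpha|+1}l_{i,\alpha}$ equals $-l_{i,\alpha}$ in the even case and $+l_{i,\alpha}$ in the odd case; so the two products above merge into the single product $\prod_{i,\alpha}(1-(-t)^i(-\lambda)^\alpha)^{(-1)^{i+|\alpha|+1}l_{i,\alpha}}$, which is the assertion. The proof has no serious obstacle; the only steps that reward attention are citing PBW in its $\ZZ/2$-graded form (so that odd elements yield exterior, not polynomial, factors) and the sign/parity bookkeeping that collapses the two products into one.
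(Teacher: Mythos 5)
Your proof is correct and follows essentially the same route as the paper: the Poincar\'e--Birkhoff--Witt theorem for graded Lie superalgebras identifies $U(L)$ as a graded vector space with the free graded-commutative algebra on a homogeneous basis of $L$, each even basis element contributing a factor $(1-t^i\lambda^\alpha)^{-1}$ and each odd one a factor $1+t^i\lambda^\alpha$, after which the sign bookkeeping merges the two cases into the stated single product. Your extra remarks (why $y^2$ is not a new PBW monomial in characteristic zero, and why the infinite product makes sense) are fine elaborations of steps the paper leaves implicit.
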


\begin{proof}
The Poincar\'e-Birkhoff-Witt theorem implies that if $\{x_{i,\alpha}\}$ is a homogeneous additive basis  of $L,$ then $U(L)$ is isomorphic (as a graded vector space) to the free graded-commutative algebra generated by $\{x_{i,\alpha}\}.$ It follows that every summand $t^i\lambda^{\alpha}$ in $F(L;t,\lambda)$ corresponds to a factor
$$
\left.
\begin{cases}
F(\Lambda[x_{i,\alpha}];t,\lambda)=1+t^i\lambda^\alpha,&i+|\alpha|\text{ odd}\\
F(\FF[x_{i,\alpha}];t,\lambda)=\frac{1}{1-t^i\lambda^\alpha},&i+|\alpha|\text{ even}
\end{cases}
\right\rbrace = (1-(-1)^{i+|\alpha|}t^i\lambda^{\alpha})^{(-1)^{i+|\alpha|+1}}$$ in $F(U(L);t,\lambda).$ 
\end{proof}
\begin{lmm}
\label{lmm:poincare_lie_explicit_formula}
$$\displaystyle{\widetilde{l}_{i,\alpha}=\sum_{k\mid i,\alpha}\widetilde{u}_{i/k,\alpha/k}\frac{\mu(k)}k,}$$ where $\sum_{k\mid i,\alpha}$ denotes summation over all positive integer $k$ dividing $i$ and that $\alpha_1,\dots,\alpha_m,$ and $\mu(n)$ is the M\"obius function.
\end{lmm}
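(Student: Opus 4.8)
The plan is to derive the stated Möbius-inversion formula from Theorem~\ref{thm:pbw_crl} by taking logarithms and applying classical necklace/Möbius-inversion bookkeeping, exactly as in the proof of the Milnor-Moore rank formula for the $\ZZ$-graded case. Start from the product formula
$$F(U(L);t,\lambda)=\prod_{i,\alpha}\bigl(1-(-t)^i(-\lambda)^\alpha\bigr)^{(-1)^{i+|\alpha|+1}l_{i,\alpha}}=\prod_{i,\alpha}\bigl(1-t^i\lambda^\alpha\bigr)^{-\widetilde{l}_{i,\alpha}},$$
where the second equality uses $(-t)^i(-\lambda)^\alpha=(-1)^{i+|\alpha|}t^i\lambda^\alpha$ and the identity $\widetilde{l}_{i,\alpha}=(-1)^{i+|\alpha|}l_{i,\alpha}$ recorded before Theorem~\ref{thm:pbw_crl}. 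Taking $\ln$ of both sides and expanding $-\ln(1-x)=\sum_{r\ge 1} x^r/r$ gives
$$\ln F(U(L);t,\lambda)=\sum_{i,\alpha}\widetilde{l}_{i,\alpha}\sum_{r\ge 1}\frac{t^{ri}\lambda^{r\alpha}}{r}=\sum_{j,\beta}\Bigl(\sum_{r\mid j,\beta}\frac{\widetilde{l}_{j/r,\beta/r}}{r}\Bigr)t^j\lambda^\beta,$$
so that $u_{j,\beta}=\sum_{r\mid j,\beta}\widetilde{l}_{j/r,\beta/r}/r$, where $r\mid j,\beta$ means $r$ divides $j$ and each coordinate of $\beta$.

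Next I would rewrite this relation in a form amenable to Möbius inversion. Multiplying through by $j$ (or, more cleanly, by working with the ``primitive'' coordinates) one gets $j\,u_{j,\beta}=\sum_{r\mid j,\beta}(j/r)\,\widetilde{l}_{j/r,\beta/r}$; this is a Dirichlet-convolution identity of the form $b=\mathbf{1}*a$ along the poset of divisors, where $a_{k,\gamma}:=k\,\widetilde{l}_{k,\gamma}$ and $b_{j,\beta}:=j\,u_{j,\beta}$. Classical Möbius inversion on this lattice then yields $j\,\widetilde{l}_{j,\beta}=\sum_{k\mid j,\beta}\mu(k)\,(j/k)\,u_{j/k,\beta/k}$. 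Dividing by $j$ and reindexing ($k\leftrightarrow$ the divisor, so $i/k$ and $\alpha/k$ appear), this is precisely
$$\widetilde{l}_{i,\alpha}=\sum_{k\mid i,\alpha}\widetilde{u}_{i/k,\alpha/k}\frac{\mu(k)}{k},$$
once we note $u_{i/k,\alpha/k}=\widetilde{u}_{i/k,\alpha/k}$ (which follows since $(-1)^{(i/k)+|\alpha/k|}=(-1)^{(i+|\alpha|)/k}$ need not equal $1$, so one should instead verify it directly: writing the displayed identity in terms of $\widetilde{u}$ is automatic because passing $(t,\lambda)\mapsto(-t,-\lambda)$ throughout commutes with the logarithm expansion and with Möbius inversion, the cleanest route being to run the entire computation above with $F(U(L);-t,-\lambda)=\prod(1-(-t)^i(-\lambda)^\alpha)^{-l_{i,\alpha}}$ from the start, producing $\widetilde{u}$ and $\widetilde{l}$ directly).

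The technical care needed is minor but real: the sum $\sum_{k\mid i,\alpha}$ is finite only because $L$ is connected with the first grading of fixed sign and each multigraded piece is finite-dimensional, so that for fixed $(i,\alpha)$ only finitely many $k$ contribute — this must be invoked to make the logarithmic expansion and the power-series manipulations legitimate in $\ZZ[[t^{\pm1},\lambda]]$. I expect the main obstacle to be purely notational: keeping the divisibility condition ``$k$ divides $i$ and every component of $\alpha$'' consistent through the convolution and inversion steps, and correctly tracking the two weightings (the factor $1/r$ from the logarithm versus the factor $j/r$ inserted to linearize the convolution). Once the bookkeeping is set up so that $j\,u_{j,\beta}$ and $j\,\widetilde{l}_{j,\beta}$ are related by the standard $\mathbf 1$-convolution along divisors, the Möbius inversion theorem applies verbatim and the result drops out.
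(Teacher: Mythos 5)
Your overall strategy --- take the logarithm of the product formula of Theorem~\ref{thm:pbw_crl}, expand $-\ln(1-x)=\sum_{r\ge1}x^r/r$, read off a divisor-convolution between the coefficients, and apply M\"obius inversion over the divisor lattice of $\ZZ\times\Zm$ --- is exactly the paper's proof. However, the execution slips on the signs at precisely the point where the tilded quantities matter. The second equality in your first display is false: since $1-(-t)^i(-\lambda)^\alpha = 1-(-1)^{i+|\alpha|}t^i\lambda^\alpha$, the product $\prod_{i,\alpha}(1-t^i\lambda^\alpha)^{-\widetilde{l}_{i,\alpha}}$ equals $F(U(L);-t,-\lambda)$, not $F(U(L);t,\lambda)$. (For a single generator of odd total degree one has $F(U(L);t,\lambda)=1+t^i\lambda^\alpha$, whereas $(1-t^i\lambda^\alpha)^{-\widetilde{l}_{i,\alpha}}=1-t^i\lambda^\alpha$.) As a consequence, your convolution identity mixes untilded and tilded coefficients, $u_{j,\beta}=\sum_{r\mid j,\beta}\widetilde{l}_{j/r,\beta/r}/r$, which is wrong in general; the correct identity has $\widetilde{u}_{j,\beta}$ on the left.

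You do notice the problem at the end, and your proposed remedy --- run the whole computation at $(-t,-\lambda)$ --- is indeed the paper's route, but the formula you write there is again off: substituting $(t,\lambda)\mapsto(-t,-\lambda)$ into Theorem~\ref{thm:pbw_crl} gives $F(U(L);-t,-\lambda)=\prod_{i,\alpha}(1-t^i\lambda^\alpha)^{(-1)^{i+|\alpha|+1}l_{i,\alpha}}=\prod_{i,\alpha}(1-t^i\lambda^\alpha)^{-\widetilde{l}_{i,\alpha}}$, not $\prod_{i,\alpha}(1-(-t)^i(-\lambda)^\alpha)^{-l_{i,\alpha}}$. With the corrected identity, taking logarithms yields $\widetilde{u}_{j,\beta}=\sum_{k\mid j,\beta}\widetilde{l}_{j/k,\beta/k}\cdot\frac1k$ with tildes on both sides, and your M\"obius step (multiplying by $j$ to get a $\mathbf{1}$-convolution, or equivalently observing that $k\mapsto\mu(k)/k$ is the Dirichlet inverse of $k\mapsto 1/k$) then gives the lemma directly, with no need to compare $u_{j,\beta}$ and $\widetilde{u}_{j,\beta}$; this is exactly how the paper concludes. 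Your remark on finiteness of the sums, guaranteed by connectedness and the fixed sign of the first grading, is correct and matches the paper's standing assumptions.
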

\begin{proof}
By Theorem~\ref{thm:pbw_crl}, $F(U(L);-t,-\lambda)=\prod_{i,\alpha} (1-t^i\lambda^\alpha)^{-\widetilde{l}_{i,\alpha}}$ . Hence
$$\sum_{j,\beta} \widetilde{u}_{j,\beta}t^j\lambda^\beta=-\sum_{i,\alpha} \widetilde{l}_{i,\alpha}\ln(1-t^i\lambda^\alpha)=\sum_{i,\alpha} \widetilde{l}_{i,\alpha}\sum_{k=1}^\infty\frac{t^{ik}\lambda^{k\alpha}}k=\sum_{j,\beta}\sum_{k\mid j,\beta}\widetilde{l}_{j/k,\beta/k}\frac{t^j\lambda^{\beta}}k.$$
Equivalently, $\widetilde{u}_{j,\beta} =\sum_{k\mid j,\beta} \widetilde{l}_{j/k,\beta/k}\cdot\frac1k.$
The claim now follows from the ($(\ZZ\times\Zm)$-graded) M\"obius inversion formula.\qedhere
\end{proof}

\begin{thm}
\label{thm:homotopy_ranks}
Let $\K$ be a flag simplicial complex. Consider the $\Zm$-graded power series
$$
    \sum_\beta w_{\beta}\lambda^\beta =
    -\ln\left(-\sum_{J\subset[m]} \widetilde{\chi}(\K_J)(-\lambda)^{J}\right).
$$
Then $$
    \dim (\pi_*(\OZK)\otimes\QQ)_{-|\alpha|,2\alpha}=
    (-1)^{|\alpha|}\sum_{k\mid \alpha}\frac{\mu(k)}k w_{\alpha/k}
$$
for every $\alpha\in\Zm.$ Otherwise, $\dim (\pi_*(\OZK)\otimes\QQ)_{i,\beta}=0.$
\end{thm}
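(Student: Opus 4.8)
The plan is to reduce the statement to the Poincar\'e--Birkhoff--Witt machinery already in place. Write $L:=\pi_*(\OZK)\otimes\QQ$, so that $H_*(\OZK;\QQ)\cong U(L)$ by the Milnor--Moore theorem, with the $(\ZZ\times\Zm)$-grading described above; then $F(L;t,\lambda)$ and $F(U(L);t,\lambda)$ are linked by Lemma~\ref{lmm:poincare_lie_explicit_formula}, while $F(U(L);t,\lambda)$ is computed by Theorem~\ref{thm:poincare_series}.

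First I would record that, for flag $\K$, the algebra $H_*(\OZK;\QQ)$ is concentrated in the bidegrees $(-i,2\alpha)$ with $i=|\alpha|$. This follows because $H_*(\ODJ;\QQ)$ is concentrated in exactly these bidegrees in the flag case (Theorem~\ref{thm:hodj_description}(c) and the remark after it) and $H_*(\OZK;\QQ)$ is a $(\ZZ\times\Zm)$-graded subalgebra of $H_*(\ODJ;\QQ)$. Since $L\subset H_*(\OZK;\QQ)$ as a graded subspace, $L$ is concentrated in the same bidegrees; in particular its first grading is negative on its positive-degree part, so Theorem~\ref{thm:pbw_crl} and Lemma~\ref{lmm:poincare_lie_explicit_formula} apply, and $\dim(\pi_*(\OZK)\otimes\QQ)_{i,\beta}=0$ whenever $(i,\beta)$ is not of the form $(-|\alpha|,2\alpha)$. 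This already settles the ``otherwise'' clause.

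Next I would read off the coefficients $u_{j,\gamma}$ of $\ln F(U(L);t,\lambda)$. By Theorem~\ref{thm:poincare_series},
\[
\ln F(U(L);t,\lambda)=-\ln\Bigl(-\sum_{J\subset[m]}\widetilde\chi(\K_J)\,t^{-|J|}\lambda^{2J}\Bigr).
\]
Comparing this with the definition of $w_\beta$, in which a face $J$ contributes the monomial $(-\lambda)^J=(-1)^{|J|}\lambda^J$, one checks that substituting $\lambda_j\mapsto -t^{-1}\lambda_j^2$ into $\sum_\beta w_\beta\lambda^\beta$ reproduces exactly the right-hand side above. Hence $u_{-|\beta|,2\beta}=(-1)^{|\beta|}w_\beta$ and all other $u_{j,\gamma}$ vanish, and the sign rule $\widetilde u_{j,\gamma}=(-1)^{j+|\gamma|}u_{j,\gamma}$ gives $\widetilde u_{-|\beta|,2\beta}=w_\beta$, with $\widetilde u_{j,\gamma}=0$ otherwise.

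Finally I would substitute this into $\widetilde l_{i,\alpha}=\sum_{k\mid i,\alpha}\widetilde u_{i/k,\alpha/k}\,\mu(k)/k$ from Lemma~\ref{lmm:poincare_lie_explicit_formula}, evaluated at $(i,\alpha)=(-|\gamma|,2\gamma)$. The summand $\widetilde u_{-|\gamma|/k,2\gamma/k}$ is nonzero only when $k$ divides every coordinate $\gamma_j$ (otherwise $2\gamma/k$ has an odd coordinate, hence is not of the admissible shape $2\beta$), and for such $k$ it equals $w_{\gamma/k}$; thus $\widetilde l_{-|\gamma|,2\gamma}=\sum_{k\mid\gamma}\frac{\mu(k)}{k}w_{\gamma/k}$. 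Since $\widetilde l_{-|\gamma|,2\gamma}=(-1)^{-|\gamma|+|2\gamma|}\dim L_{-|\gamma|,2\gamma}=(-1)^{|\gamma|}\dim(\pi_*(\OZK)\otimes\QQ)_{-|\gamma|,2\gamma}$ and $(-1)^{|\gamma|}$ is its own inverse, the claimed formula drops out. The only delicate points are the sign and substitution bookkeeping in the third step and the greatest-common-divisor condition $k\mid\gamma$ forced by admissibility of the bidegree in the last step; everything else is a direct assembly of the quoted results.
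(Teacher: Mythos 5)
Your proposal is correct and follows essentially the same route as the paper: Milnor--Moore to identify $H_*(\OZK;\QQ)$ with $U(L)$, Theorem \ref{thm:poincare_series} to read off $\widetilde u_{-|\beta|,2\beta}=w_\beta$ (you compute $u$ first and then apply the sign rule, the paper substitutes $(-t,-\lambda)$ directly, which is the same bookkeeping), and Lemma \ref{lmm:poincare_lie_explicit_formula} with the observation that only $k\mid\alpha$ contributes. The only cosmetic difference is the ``otherwise'' clause, which you justify via the diagonal concentration of $H_*(\ODJ;\QQ)$ in the flag case, while the paper deduces it formally from the vanishing of the $\widetilde u_{i,\beta}$ in the same M\"obius-inversion formula; both are valid.
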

\begin{proof}
Let $L=\pi_*(\OZK)\otimes\QQ,$ so $U(L)=H_*(\OZK;\QQ)$ by the Milnor-Moore theorem. 
Then Theorem \ref{thm:poincare_series} gives
$$\sum_{i,\beta}\widetilde{u}_{i,\beta}t^i\lambda^\beta=-\ln\left(-\sum_{J\subset[m]}\widetilde{\chi}(\K_J)(-t)^{-|J|}\lambda^{2J}\right)=
-\ln\left(-\sum_{J\subset[m]}\widetilde{\chi}(\K_J)(-\lambda^2/t)^J\right)=
\sum_\alpha w_\alpha t^{-|\alpha|}\lambda^{2\alpha}.$$
Thus $\widetilde{u}_{-|\alpha|,2\alpha}=w_\alpha,$ and $\widetilde{u}_{i,\beta}=0$ in other cases. By Lemma \ref{lmm:poincare_lie_explicit_formula},
$$\widetilde{l}_{-|\alpha|,2\alpha}=\sum_{k\mid -|\alpha|,2\alpha}\widetilde{u}_{-|\alpha|/k,2\alpha/k}\frac{\mu(k)}{k}=\sum_{k\mid \alpha}w_{\alpha/k}\frac{\mu(k)}k,$$
and $\widetilde{l}_{i,\beta}=0$ in other cases Finally, $\dim(\pi_*(\OZK)\otimes\QQ)_{-|\alpha|,2\alpha}=l_{-|\alpha|,2\alpha}=(-1)^{|\alpha|}\widetilde{l}_{-|\alpha|,2\alpha}.$
\end{proof}

\begin{rmk}The numbers
$$l_n:=\dim \pi_{n+1}(\ZK)\otimes\QQ=\dim\pi_{n}(\OZK)\otimes\QQ=\sum_{|\alpha|=n}l_{-|\alpha|,2\alpha}$$
for flag $\K$ were first described by Denham and Suciu \cite[Theorem 4.2.1]{denham_suciu}. They followed the same scheme: the $\ZZ$-graded variant of Theorem \ref{thm:pbw_crl} gives
$$F(H_*(\OZK;\QQ);t)=\prod_{r\geq 1} \frac{(1+t^{2r-1})^{l_{2r}}}{(1-t^{2r})^{l_{2r+1}}}.$$ The \emph{L\"ofwall formula} is then used to express $F(H_*(\OZK;\QQ);t)$ through the bigraded Poincar\'e series of $H^*(\ZK;\QQ).$ Sometimes this approach simplifies the computations, as in \cite[Example 7.2.1]{denham_suciu}, but in general our formula from Theorem \ref{thm:homotopy_ranks} is more explicit. It also gives the multigraded answer.
\end{rmk}

\begin{rmk} In \cite{ustinovskiy}, Ustinovskiy interpreted the \emph{non-negativity} of numbers $l_n=\dim(\pi_n(\OZK)\otimes\QQ)$  as certain polynomial inequalities on the $h$-numbers of $\K$ in the flag case (the $h$-numbers appear as coefficients in the Panov-Ray formula). 

In a similar way, using the multigrading, we can interpret the non-negativity of $l_{-|\alpha|,2\alpha}=\dim(\pi_*(\OZK)\otimes\QQ)_{-|\alpha|,2\alpha}$ as polynomial inequalities for the reduced Euler characteristics of full subcomplexes of $\K.$ These inequalities imply those by Ustinovsky, since $l_n=\sum_{|\alpha|=n}l_{-|\alpha|,2\alpha}.$ However, their combinatorial interpretation is subtler. In particular, in the multigraded case the logarithm of a power series cannot be expressed through the Newton polynomials, as in \cite[Lemma 1.4]{ustinovskiy}.
\end{rmk}

\begin{exm}
\label{exm:restriction_on_chi}
Consider the inequality $\dim (\pi_*(\OZK)\otimes\QQ)_{-|\alpha|,2\alpha}\geq 0$ for $\alpha=[m]=\sum_{i=1}^me_i\in\Zm.$ In this case $k|\alpha$ only if $k=1,$ so Theorem \ref{thm:homotopy_ranks} gives
$$\dim (\pi_*(\OZK)\otimes\QQ)_{-m,2[m]}=(-1)^{m}w_{[m]},\quad\sum_\beta w_\beta\lambda^\beta=-\ln\left(1-\sum_{J\subset[m],~J\neq\varnothing}(-1)^{|J|}\widetilde{\chi}(\K_J)\lambda^J\right).$$
Expanding the right side using the formula $-\ln(1-t)=\sum_{N\geq 1}t^N/N$ and taking the coefficient of $\lambda^{[m]},$ we obtain the following inequality: for any flag complex $\K,$

\begin{equation}
    \sum_{N\geq 1}\frac 1N\sum_{[m]=J_1\sqcup\dots\sqcup J_N}^{J_i\neq\varnothing}\,\prod_{i=1}^N\widetilde{\chi}(\K_{J_i})\geq 0.
\end{equation}

Here the subsets $J_1,\dots,J_N$ are ordered, so every partition $\{J_1,\dots,J_N\}$ of $[m]$ appears in the sum exactly $N!$ times.

More generally, if $\mathrm{gcd}(\alpha_1,\dots,\alpha_m)=1$ then similar calculations give the inequality
$$\dim(\pi_*(\OZK)\otimes\QQ)_{-|\alpha|,2\alpha}=\sum_{N\geq 1}\frac{1}N\sum_{\alpha=J_1+\dots+J_N}^{J_i\neq\varnothing}\prod_{i=1}^N\widetilde{\chi}(\K_{J_i})\geq 0.$$
\end{exm}

\section{LS-category of moment-angle complexes in the flag case}
\label{section:ls}
\begin{dfn}
Let $X$ be a topological space. The \emph{LS-category} $\cat(X)$ is the smallest integer $n,$ such that there is an open covering $X=U_0\cup\dots\cup U_n$ where every $U_i\hookrightarrow X$ is null-homotopic.
\end{dfn}
\begin{rmk}
In early sources, such as \cite{ginsburg}, the covering was denoted by $U_1\cup\dots\cup U_n.$ Hence the definition of $\cat(X)$ was shifted by $1.$ In other papers we refer to, the modern convention is used.
\end{rmk}
Lower and upper bounds on $\cat(\ZK)$ were obtained in \cite{beben_grbic}. We use the following result.
\begin{prp}[{\cite[Corollary 3.13]{beben_grbic}}]
\label{prp:beben_grbic_inequality} Let $\K$ be a simplicial complex without ghost vertices. Then $\cat(\ZK)\le \cat(\RK).$\qed
\end{prp}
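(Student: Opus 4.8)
Since $\ZK=(D^2,S^1)^\K$ and $\RK=(D^1,S^0)^\K$ are polyhedral products over the same complex, the plan is to turn a categorical open cover of $\RK$ into one of $\ZK$ of the same size. Both spaces fibre over a common contractible base: writing $D^2$ with its rotation $\mathbb{T}^1$-action and $D^1=[-1,1]$ with its conjugation $\ZZ/2$-action, the orbit maps onto $[0,1]$ assemble, by functoriality of polyhedral products in the pairs, into a $\mathbb{T}^m$-equivariant map $\mu_\CC\colon\ZK\to P$ and a $(\ZZ/2)^m$-equivariant map $\mu_\RR\colon\RK\to P$, where $P:=([0,1],\{1\})^\K$ deformation retracts onto $(1,\dots,1)$ and is therefore contractible. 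Over $p\in P$ the fibre of $\mu_\CC$ is the coordinate subtorus $\mathbb{T}^{\omega(p)}$ and that of $\mu_\RR$ is $(\ZZ/2)^{\omega(p)}\subset\mathbb{T}^{\omega(p)}$, where $\omega(p)=\{i:p_i>0\}$. Note that the comparison must go through $P$ rather than through a direct map $\ZK\to\RK$: any map $\ZK\to\RK$ induced by a map of pairs $(D^2,S^1)\to(D^1,S^0)$ sends the connected $S^1$ to a single point of $S^0$, so its image lies in a subpolyhedron of the form $(X,\pt)^\K$, which is contractible, and the map is null-homotopic.

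Given a categorical cover $\RK=V_0\cup\dots\cup V_n$ with $n=\cat(\RK)$, I would enlarge each $V_j$ to a $\mu_\RR$-saturated open neighbourhood $W_j=\mu_\RR^{-1}(O_j)$, with $O_j\subseteq P$ open, chosen so that $W_j$ deformation retracts onto $V_j$ and the $O_j$ cover $P$; then $U_j:=\mu_\CC^{-1}(O_j)$ are open and cover $\ZK$. The task is to prove each inclusion $U_j\hookrightarrow\ZK$ null-homotopic. One would replace $\mu_\CC,\mu_\RR$ by homotopy-equivalent fibrations over $P$ and use the contractibility of $V_j$ in $\RK$ together with a compatible contraction in the torus directions over $O_j$ to produce a null-homotopy of $U_j$ in $\ZK$; since $P$ itself is contractible, the only genuine content is how the fibre tori of $\mu_\CC$ over $O_j$ are carried around.

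The step I expect to be the main obstacle is exactly this last point. The inequality $\cat(\ZK)\le\cat(\RK)$ lies far below $\dim\ZK$, so it cannot come from a dimension/connectivity estimate; and there is no retraction $\ZK\to\RK$ (already for $\K=\partial\Delta^1$, where $\ZK=S^3$ and $\RK=S^1$), so the torus directions of $\ZK$ do not simply disappear — a coordinate subtorus $\mathbb{T}^{I^c}$ need not be null-homotopic in $\ZK$. What must be exploited is that the sets $U_j$, being preimages of subsets of the contractible base $P$, meet these subtori only along pieces over which the combinatorics of $\K$ lets them be contracted; in other words, the cover $\{V_j\}$ of $\RK$ and the fibrewise null-homotopies have to be chosen compatibly with the face structure of $\K$. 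An alternative route would be to build parallel cone decompositions of $\ZK$ and of $\RK$ indexed by the same filtration of the poset of faces of $\K$, prove they have equal cone length, and combine this with the identification of $\cat(\RK)$ with that cone length; the difficulty then migrates to checking that attaching the tori (rather than the pairs of points) at each stage does not increase the cone length.
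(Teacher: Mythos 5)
There is a genuine gap, and you have in fact flagged it yourself: the decisive step --- that each $U_j=\mu_\CC^{-1}(O_j)$ is null-homotopic in $\ZK$ --- is exactly the content of the proposition, and your plan offers no mechanism for it beyond the hope of ``a compatible contraction in the torus directions.'' Nothing you retain from the categorical cover of $\RK$ (an open $O_j$ in the orbit space $P=([0,1],\{1\})^\K$ over which the union of finite orbits $(\ZZ/2)^{\omega(p)}$ can be contracted in $\RK$) is shown to produce a contraction in $\ZK$ of the corresponding union of tori $\mathbb{T}^{\omega(p)}$, and you yourself note that subtori of $\ZK$ carry the essential homotopy of $\ZK$, so no soft argument will do. Two auxiliary steps are also unjustified as stated. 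First, the saturation step: $W_j=\mu_\RR^{-1}(O_j)$ contains entire $(\ZZ/2)^{\omega(p)}$-orbits, which are spread over all $2^{|\omega(p)|}$ ``quadrants'' of $\RK$, so in general no choice of $O_j$ makes $W_j$ deformation retract onto the given categorical set $V_j$; at best one could hope that $W_j$ is itself null-homotopic in $\RK$, which is an unproven claim of the same kind as the one you need for $\ZK$. Second, replacing $\mu_\CC,\mu_\RR$ by fibrations over $P$ cannot help: $P$ is contractible, so the homotopy fibres are $\ZK$ and $\RK$ themselves, and all the information lies in the stratification of the orbit maps, which the plan never exploits. So the proposal reduces the statement to a statement of equal difficulty rather than proving it.

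For comparison: the paper itself gives no proof --- the proposition is quoted from Beben and Grbi\'c (Corollary 3.13 there), so your argument has to stand on its own. The techniques in that part of their paper are covering and filtration arguments carried out directly on polyhedral products of cone pairs, in the spirit of the decomposition reproduced here as Proposition \ref{prp:beben_grbic_filtration_bounds} (their Lemma 3.11), i.e.\ covers adapted to the face structure of $\K$ rather than to the orbit maps onto $([0,1],\{1\})^\K$. If you want a self-contained proof, the productive move is to build the categorical cover of $\ZK$ from combinatorial data of $\K$ (which simultaneously governs $\RK$), not to transport an arbitrary open cover of $\RK$ through the orbit space.
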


\subsection{LS-category of real moment-angle complexes in the flag case}
\label{subsection:cat_rk}

For flag complexes, the calculation of $\cat(\RK)$ reduces to a group-theoretical problem which was solved by Dranishnikov \cite{dranishnikov}.

\begin{thm}[see \cite{pv}]
If $\K$ is flag, then $\RK = B(\RCK'),$ where $\RCK'$ is the commutator subgroup of the right-angled Coxeter group $\RCK.$\qed
\end{thm}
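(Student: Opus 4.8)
The plan is to realise $\RK$ as the quotient of the (cubical) Davis complex of $\RCK$ by a free action of the commutator subgroup $\RCK'$, so that asphericity follows from contractibility of that Davis complex. The flag hypothesis enters exactly where it is needed: it is what makes the real moment-angle ``chamber'' coincide with the Davis chamber, and hence makes that Davis complex $\mathrm{CAT}(0)$.

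First I would set up the chamber. Put $C:=([0,1],\{1\})^\K\subset (D^1)^m$; this is precisely $\RK\cap[0,1]^m$, it is contractible (the straight-line homotopy contracting every coordinate to $1$ stays inside $C$), and it carries a mirror structure with $i$-th mirror $C_i:=\{x\in C:\ x_i=0\}$. Note that for $x\in C$ the set $\{i:\ x_i=0\}$ is a face of $\K$. Then I would check two elementary identifications of \emph{basic constructions}:
$$\RK\;\cong\;\mathcal{U}\big((\ZZ/2)^m,\,C\big),\qquad\qquad \Sigma_\K\;\cong\;\mathcal{U}\big(\RCK,\,C\big),$$
where for a group $G$ generated by the mirror reflections one sets $\mathcal{U}(G,C):=(G\times C)/{\sim}$ with $(g,x)\sim(g',x)$ iff $g^{-1}g'$ lies in the subgroup generated by $\{\,i:\ x\in C_i\,\}$, and $\Sigma_\K$ is the Davis complex of the Coxeter system $(\RCK,\{g_1,\dots,g_m\})$. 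The first identification is just the reorganisation of $\RK=\bigcup_{I\in\K}\prod_{i\in I}D^1\times\prod_{i\notin I}S^0$ into the $2^m$ translates of the fundamental domain $C$ under the sign-change action of $(\ZZ/2)^m$, glued along mirrors. The second is Davis's description of $\Sigma_\K$; here the flag hypothesis is used to know that the nerve of $(\RCK,\{g_i\})$ is $\K$ itself, so that $C$ is genuinely the Davis chamber.

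Next I would let the abelianisation epimorphism $\RCK\twoheadrightarrow(\ZZ/2)^m$ (whose kernel is, by definition, $\RCK'$) induce a map $p\colon\Sigma_\K=\mathcal{U}(\RCK,C)\to\mathcal{U}((\ZZ/2)^m,C)=\RK$, and show that $p$ is a covering with deck group $\RCK'$. The crux is the stabiliser computation: the $\RCK$-stabiliser of a point $x\in C$ is the special parabolic $\langle g_i:\ x\in C_i\rangle$, and since $\{i:\ x_i=0\}$ is a face of $\K$, these generators pairwise commute and square to $1$, so the stabiliser is isomorphic to $(\ZZ/2)^{\{i:\,x_i=0\}}$ and maps \emph{injectively} into $(\ZZ/2)^m$. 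Consequently $\RCK'$ meets every point stabiliser trivially, hence acts freely on $\Sigma_\K$, and $\Sigma_\K/\RCK'=\mathcal{U}(\RCK/\RCK',C)=\mathcal{U}((\ZZ/2)^m,C)=\RK$; thus $p$ is the (regular) covering associated to $\RCK'$.

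Finally, $\Sigma_\K$ is simply connected (a standard property of Davis complexes), so $p$ is the universal cover and $\pi_1(\RK)\cong\RCK'$; and since $\K$ is flag, $\Sigma_\K$ is moreover $\mathrm{CAT}(0)$ (each vertex link is a copy of the flag complex $\K$, so Gromov's link condition applies to the simply connected cube complex $\Sigma_\K$), hence contractible. Therefore $\RK=\Sigma_\K/\RCK'$ is aspherical with fundamental group $\RCK'$, i.e. $\RK=B(\RCK')$. The main obstacle is the bookkeeping in the two basic-construction identifications and, above all, the stabiliser/covering argument; once these are in place the asphericity is immediate from the flag hypothesis. One can also avoid the Davis complex entirely --- verify directly that $\RK$ is a cube complex whose vertex links all equal $\K$, so that for flag $\K$ it is non-positively curved and hence aspherical, and separately identify $\pi_1(\RK)$ with $\RCK'$ from the cubical $2$-skeleton (whose $1$-skeleton is the Cayley graph of $(\ZZ/2)^m$ on the standard generators, with one square glued in for each relation $g_ig_j=g_jg_i$, $\{i,j\}\in\K$) --- but that route concentrates the whole weight of the argument into a delicate combinatorial group-theory computation.
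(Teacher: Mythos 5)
Your proposal is correct, and since the paper states this theorem without proof (quoting it from \cite{pv}), the right comparison is with that source: your argument --- realizing $\RK$ as the basic construction $\mathcal{U}((\ZZ/2)^m,C)$ with chamber $C=([0,1],\{1\})^\K$, identifying $\mathcal{U}(\RCK,C)$ with the Davis complex (flagness giving nerve $=\K$ and the CAT(0) property), and checking via the parabolic stabilizers that $\RCK'$ acts freely with quotient $\RK$ --- is essentially the standard proof given there and in Davis's theory. No gaps; the stabilizer computation and the identification $\mathcal{U}(\RCK,C)/\RCK'\cong\mathcal{U}(\RCK/\RCK',C)\cong\RK$ are exactly the points that need checking, and you check them.
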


\begin{dfn} The \emph{cohomological dimension} $\cdim_\k X$ of a topological space $X$ over a ring $\k$ is the smallest integer $n$ such that $\H^i(X;\k)=0$ for all $i>n.$ Similarly, the \emph{homological dimension} $\hdim_\k X$ is the smallest $n$ such that $\H_i(X;\k)=0$ for all $i>n.$
\end{dfn}

\begin{dfn} The \emph{cohomological dimension} $\cd G$ of a discrete group $G$ is the smallest integer $n$ such that $H^i(G;M)=0$ for all $i>n$ and all $\ZZ[G]$-modules $M.$
\end{dfn}

To cover the degenerate cases, we set $\cdim_\k\pt:=-1$ and $\cd 1:=0.$ Note that $\cdim_\k\varnothing:=-1.$

\begin{thm}
\label{thm:eg} For any discrete group $G,$ one has $\cd G=\cat(BG).$ 
\end{thm}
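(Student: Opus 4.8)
The plan is to prove the equality $\cd G=\cat(BG)$ in two inequalities, using that $BG=K(G,1)$ is (up to homotopy) a CW-complex whose cellular structure mirrors the bar resolution of $\ZZ$ over $\ZZ[G]$.

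\textbf{The inequality $\cd G\leq\cat(BG)$.} Suppose $\cat(BG)=n$, so there is an open cover $BG=U_0\cup\dots\cup U_n$ with each inclusion $U_i\hookrightarrow BG$ null-homotopic. The standard LS-category machinery (the ``sectional category'' / iterated-join description of $\cat$) gives that the $(n+1)$-fold fibrewise join of the path-loop fibration over $BG$ admits a section; equivalently, the diagonal-type map $BG\to (BG)^{*(n+1)}$ factors appropriately. More concretely, I would use the classical fact (Ganea, Schwarz) that $\cat(X)\leq n$ implies that for every local coefficient system $M$ on $X$ the cup product of $n+1$ reduced cohomology classes vanishes, and more: the map $H^*(X;M)\to H^*(U_0\cup\dots\cup U_n;M)$ kills everything coming from the ``filtration'' — precisely, $\widetilde H^{>n}(X;M)=0$ is NOT quite enough, so instead I invoke the sharper statement that $\cat(X)\geq \cdim$-type invariants. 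The cleanest route: each null-homotopic $U_i\hookrightarrow BG$ means $U_i$ lifts to the universal cover $EG$, and a partition of unity subordinate to $\{U_i\}$ together with these lifts builds an equivariant map $EG\to (\text{join of }n+1\text{ copies of }G)=G^{*(n+1)}$, which is $n$-connected; hence the classifying map $BG\to B(\text{something})$ shows $H^i(G;M)=H^i_G(EG;M)=0$ for $i>n$ for all $M$, i.e. $\cd G\leq n$.

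\textbf{The inequality $\cat(BG)\leq\cd G$.} Suppose $\cd G=n<\infty$ (the case $n=\infty$ being vacuous, and $n=0$ meaning $G=1$, $BG\simeq\pt$, $\cat=0$ by our convention $\cd 1:=0$). By a theorem of Eilenberg--Ganea (using also Stallings--Swan in low dimensions), $G$ has an aspherical CW-model for $BG$ of dimension $\max(n,3)$; but more usefully, since $\cd G=n$ the cellular chain complex of $EG$ has a projective (indeed free) resolution of length $n$, and one can build a $K(G,1)$ with only cells in dimensions $\leq n$ except possibly a correction in dimension $2$ vs $3$. For the LS-category bound I do not even need the sharp dimension: the general inequality $\cat(X)\leq\dim X$ for CW-complexes $X$ gives $\cat(BG)\leq\dim(BG)$, and choosing the Eilenberg--Ganea model of dimension $n$ (for $n\geq 3$; for $n\leq 2$ handle separately, noting $\cd G\leq 2$ groups have $2$-dimensional $BG$ by Eilenberg--Ganea, and $\cd G=1$ means $G$ free so $BG$ is a wedge of circles with $\cat=1$) yields $\cat(BG)\leq n=\cd G$.

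\textbf{Main obstacle.} The delicate point is the low-dimensional case $n\leq 2$ of the second inequality, where the Eilenberg--Ganea theorem is known but the Eilenberg--Ganea \emph{conjecture} (that $\cd G=2$ implies geometric dimension $2$) was open in general; however, it is a classical theorem of Eilenberg and Ganea that $\cd G\leq 2$ does imply the existence of a $2$-dimensional $K(G,1)$, so $\cat(BG)\leq 2$ holds, and the only subtlety is bookkeeping the degenerate values via the conventions $\cdim_\k\pt=-1$, $\cd 1=0$ fixed above. I would cite \cite{dranishnikov} (or the standard reference Brown, \emph{Cohomology of Groups}, and Eilenberg--Ganea) for these facts rather than reprove them, since in the paper this theorem is invoked precisely to feed Dranishnikov's formula for $\vcd\RCK$ into the computation of $\cat(\RK)$.
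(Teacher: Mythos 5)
Your lower bound $\cd G\leq\cat(BG)$ is, in outline, the standard Schwarz-type argument and is fine modulo details (the lifts $U_i\to EG$ plus a partition of unity give a $G$-map $EG\to G^{*(n+1)}$; note the join is $(n-1)$-connected and $n$-dimensional, and to conclude $\cd G\leq n$ you still need the retraction step: a $G$-map back $G^{*(n+1)}\to EG$ exists and the composite is $G$-homotopic to the identity, so $H^i(G;M)$ is a retract of the cohomology of the $n$-dimensional complex $G^{*(n+1)}/G$ with local coefficients, hence vanishes for $i>n$). The genuine gap is in the other inequality, precisely at $\cd G=2$. Your route is $\cat(BG)\leq\dim BG$ together with a model of $BG$ of dimension $\cd G$, and you assert that ``it is a classical theorem of Eilenberg and Ganea that $\cd G\leq 2$ implies the existence of a $2$-dimensional $K(G,1)$.'' That statement is false as a known theorem: it is exactly the Eilenberg--Ganea \emph{conjecture}, which is still open (and which you correctly flag one sentence earlier, so the paragraph contradicts itself). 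What Eilenberg and Ganea actually proved is that the geometric dimension is at most $\max(3,\cd G)$, so for $\cd G=2$ the dimension bound only gives $\cat(BG)\leq 3$, and your argument does not close the case $\cd G=2$.

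The repair is the one the paper uses: invoke the Eilenberg--Ganea theorem as a statement about LS-category itself, namely $\cat(K(G,1))=\cd G$ whenever $\cd G\neq 1$; their proof of the $\cd G=2$ case is category-specific and does not proceed by producing a $2$-dimensional aspherical model, which is why the open conjecture is irrelevant there. The remaining case $\cd G=1$ is then handled exactly as you (and the paper) do: by Stallings--Swan $G$ is free, so $BG$ is a wedge of circles and $\cat(BG)=1$; the degenerate case $\cd G=0$ gives $G=1$ and $\cat(\pt)=0$, and $\cd G=\infty$ is covered by the lower bound. So your plan for $\cd G\in\{0,1\}\cup[3,\infty]$ works, but as written the theorem is not proved for $\cd G=2$ without citing the category statement of Eilenberg--Ganea rather than their dimension statement.
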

\begin{proof}
By the Eilenberg-Ganea theorem~\cite{eilenberg_ganea}, the inequality $\cd G\neq\cat(BG)$ is possible only if $\cd G=1.$ In that case $G$ is free by the Stallings-Swan theorem~\cite{swan}. Therefore, $BG$ is a wedge of circles, so $\cat(BG)=1.$
\end{proof}

\begin{prp}[{\cite[Proposition VIII.2.2]{brown}}]
\label{prp:cd_vs_cdim} It holds that
$\cdim_\ZZ BG \leq \cd G\leq \dim BG.$\qed
\end{prp}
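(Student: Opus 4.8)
The plan is to route all three quantities through a single object, the cellular chain complex $C_*(EG)$ of the universal cover $EG$ of $BG$. Since $EG$ is contractible and $G$ acts freely and cellularly on it by deck transformations, $C_*(EG)$ is a complex of free $\ZZ[G]$-modules, free on the cells of $BG$, which is exact in positive degrees and has $H_0(C_*(EG))\cong\ZZ$; hence, after augmentation, it is a free $\ZZ[G]$-resolution of the trivial module $\ZZ$ whose length equals $\dim BG$. Equivalently, $H^n(G;M)\cong H^n(\Hom_{\ZZ[G]}(C_*(EG),M))$ for every $\ZZ[G]$-module $M$, and in particular $H^n(BG;\ZZ)\cong H^n(G;\ZZ)$ for the trivial coefficient module. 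The degenerate case $G=1$, where $BG\simeq\pt$, is handled by the conventions fixed above: then $\cdim_\ZZ\pt=-1$, $\cd 1=0$ and $\dim\pt=0$, so the chain $-1\le 0\le 0$ holds, and I may concentrate on the general argument.

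For the inequality $\cd G\le\dim BG$, put $d:=\dim BG$ (the case $d=\infty$ being vacuous). By the previous paragraph, $C_*(EG)$ is a free $\ZZ[G]$-resolution of $\ZZ$ concentrated in degrees $0,\dots,d$, so computing $H^n(G;M)=\Ext^n_{\ZZ[G]}(\ZZ,M)$ from it shows $H^n(G;M)=0$ for all $n>d$ and all $\ZZ[G]$-modules $M$. This is precisely $\cd G\le d=\dim BG$.

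For the inequality $\cdim_\ZZ BG\le\cd G$, I would combine the identification $H^n(BG;\ZZ)\cong H^n(G;\ZZ)$ with the definition of $\cd G$. If $n>\cd G$ then $n\ge 1$, since $\cd G\ge 0$, so $\H^n(BG;\ZZ)=H^n(BG;\ZZ)=H^n(G;\ZZ)=0$; and $\H^0(BG;\ZZ)=0$ because $BG$ is connected. Hence $\H^n(BG;\ZZ)=0$ for every $n>\cd G$, which means exactly $\cdim_\ZZ BG\le\cd G$. The only substantive point in this plan is the claim of the first paragraph --- that $C_*(EG)$ is a free $\ZZ[G]$-resolution of $\ZZ$, equivalently that the cohomology of the space $BG$ computes the group cohomology of $G$ --- and this is where contractibility of $EG$ and freeness of the $G$-action enter; once it is granted, both inequalities follow by unwinding the definitions and the conventions $\cdim_\ZZ\pt=-1$, $\cd 1=0$.
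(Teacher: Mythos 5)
Your proof is correct, and since the paper offers no argument of its own here—it simply quotes Brown, Proposition VIII.2.2—your route (the cellular chains of $EG$ give a free $\ZZ[G]$-resolution of $\ZZ$ of length $\dim BG$, which yields $\cd G\leq\dim BG$, together with $H^*(BG;\ZZ)\cong H^*(G;\ZZ)$ and the connectedness of $BG$ to get $\cdim_\ZZ BG\leq\cd G$) is exactly the standard argument underlying that citation, degenerate case $G=1$ included. The only implicit hypothesis is that $BG$ is realized as a CW complex, which is needed anyway for $\dim BG$ and for the cellular chain complex, and which holds in the paper's application where $BG=\RK$.
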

\begin{crl}
\label{crl:cat_rk_lower_bound}
If $\K$ is a flag complex, then $\cdim_\ZZ\RK\leq \cd\RCK'=\cat(\RK)\leq\dim\RK.$\qed
\end{crl}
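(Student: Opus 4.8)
The plan is to derive the corollary by combining the general group-theoretic facts established just above with the identification $\RK\simeq B(\RCK')$ in the flag case. First I would invoke the theorem (from \cite{pv}) that $\RK = B(\RCK')$ for flag $\K$; this lets us replace topological invariants of $\RK$ by invariants of the group $G:=\RCK'$ and its classifying space $BG$.

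Next, I would apply Theorem~\ref{thm:eg} with $G=\RCK'$, which immediately gives the central equality $\cd\RCK'=\cat(BG)=\cat(\RK)$. Then I would apply Proposition~\ref{prp:cd_vs_cdim} to the same group $G=\RCK'$, obtaining the chain $\cdim_\ZZ B(\RCK')\leq\cd\RCK'\leq\dim B(\RCK')$. Substituting $B(\RCK')=\RK$ into the outer two terms of this chain converts it into $\cdim_\ZZ\RK\leq\cd\RCK'\leq\dim\RK$. Splicing in the equality $\cd\RCK'=\cat(\RK)$ from the previous step then yields exactly the asserted string $\cdim_\ZZ\RK\leq\cd\RCK'=\cat(\RK)\leq\dim\RK$.

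There is essentially no obstacle here: the corollary is a formal consequence of three results stated immediately before it, and the only thing to be careful about is that the homotopy equivalence $\RK\simeq B(\RCK')$ is genuine enough to transport $\cdim_\ZZ$, $\cat$, and $\dim$ — the first two are homotopy invariants, and $\dim$ should be read as the (minimal) dimension of a $K(G,1)$-complex, so that $\dim\RK$ is understood in that sense (or one simply notes $\RK$ is itself a finite CW model, so $\dim\RK$ is literally a CW dimension bounding $\dim B(\RCK')$). If one prefers to avoid even that remark, one can phrase the whole corollary with $B(\RCK')$ in place of $\RK$ throughout and then rewrite using the equivalence at the very end. Either way the proof is a one-line citation of Theorem~\ref{thm:eg} and Proposition~\ref{prp:cd_vs_cdim} applied to $\RCK'$, which is why it is stated as a corollary with a \qed and no displayed argument.
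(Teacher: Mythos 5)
Your proposal is correct and is exactly the argument the paper intends: the corollary is stated with no displayed proof precisely because it follows by applying Theorem~\ref{thm:eg} and Proposition~\ref{prp:cd_vs_cdim} to $G=\RCK'$ and using $\RK=B(\RCK')$ (so $\RK$ itself serves as the $K(G,1)$ model, making $\dim\RK$ the relevant CW dimension). Your remark about transporting $\cdim_\ZZ$, $\cat$, and $\dim$ along the identification is the right level of care, and nothing further is needed.
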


\begin{prp}
\label{prp:cdim_rk}
If $\K\neq\Delta^{m-1},$ then
$\cdim_\ZZ\RK=1+\max_{J\subset[m]}\cdim_\ZZ\K_J.$
\end{prp}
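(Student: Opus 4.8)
The plan is to read off both sides of the claimed identity from the single input of Theorem~\ref{thm:moment_angle_homology}. For $p\geq 1$ one has $\widetilde H^p(\RK;\ZZ)=H^p(\RK;\ZZ)$, so that theorem supplies a decomposition
$$\widetilde H^p(\RK;\ZZ)\cong\bigoplus_{J\subset[m]}\widetilde H^{p-1}(\K_J;\ZZ),\qquad p\geq 1.$$
Since $\RK$ is a finite complex and (as I will check) not acyclic, $\cdim_\ZZ\RK$ is precisely the largest $p$ with $\widetilde H^p(\RK;\ZZ)\neq 0$, and likewise $\cdim_\ZZ\K_J$ is the largest $q$ with $\widetilde H^q(\K_J;\ZZ)\neq 0$ whenever that largest value is nonnegative. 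So the decomposition above should convert $\cdim_\ZZ\RK$ into $1+\max_J\cdim_\ZZ\K_J$ by a shift of degree by one, provided the maxima involved are all attained in a range where the decomposition applies.

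The point that needs the hypothesis $\K\neq\Delta^{m-1}$ is exactly that this range is harmless, that is, that $d:=\max_{J\subset[m]}\cdim_\ZZ\K_J$ is at least $0$ (otherwise the degenerate values $\cdim_\ZZ\pt=\cdim_\ZZ\varnothing=-1$ together with the restriction $p\geq 1$ on the decomposition could spoil the shift). Here I would use that $\K$ has a missing face $\sigma$, and that $|\sigma|\geq 2$ since there are no ghost vertices; then $\K_\sigma=\partial\Delta^\sigma$ is a sphere $S^{|\sigma|-2}$, so $\cdim_\ZZ\K_\sigma=|\sigma|-2\geq 0$. In particular $d\geq 0$, the summand for $J=\sigma$ shows $\widetilde H^{|\sigma|-1}(\RK;\ZZ)\neq 0$ so $\RK$ is not acyclic, and the maximum $d$ is attained at some $J_1$ with $\widetilde H^d(\K_{J_1};\ZZ)\neq 0$.

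To finish, the $J=J_1$ summand gives $\widetilde H^{d+1}(\RK;\ZZ)\neq 0$, while for $p>d+1$ every summand $\widetilde H^{p-1}(\K_J;\ZZ)$ vanishes because $p-1>d\geq\cdim_\ZZ\K_J$; hence $\widetilde H^p(\RK;\ZZ)=0$ for $p>d+1$, and therefore $\cdim_\ZZ\RK=d+1$. I do not anticipate a genuine obstacle: the only care needed is the bookkeeping with reduced cohomology of the small full subcomplexes (points and the empty complex), and invoking $\K\neq\Delta^{m-1}$ exactly at the point where one needs $\max_J\cdim_\ZZ\K_J\geq 0$, which is also what keeps the formula consistent with $\RK$ being connected.
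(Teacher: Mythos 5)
Your proposal is correct and follows essentially the same route as the paper: both read off the answer from the decomposition $H^p(\RK;\ZZ)\cong\bigoplus_{J\subset[m]}\H^{p-1}(\K_J;\ZZ)$ of Theorem~\ref{thm:moment_angle_homology}, and both use a missing face of $\K$ (available since $\K\neq\Delta^{m-1}$, of size at least $2$ because there are no ghost vertices) to guarantee $\cdim_\ZZ\RK\geq 1$, so that reduced and unreduced cohomology agree in the relevant degrees and the degree shift by one goes through. You simply spell out the bookkeeping (the $J_1$ realizing the maximum, vanishing above $d+1$) that the paper's proof leaves implicit.
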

\begin{proof}
By Theorem~\ref{thm:moment_angle_homology}, $H^n(\RK;\ZZ)=\bigoplus_{J\subset[m]}\widetilde{H}^{n-1}(\K_J;\ZZ).$ Since $\K\neq\Delta^{m-1},$ there is a missing face in $\K;$ hence $H^{s-1}(\RK;\ZZ)\neq 0,$ where $s\geq 2$ is the size of the missing face. Therefore, $\cdim_\ZZ\RK\geq 1,$ and if $n\geq 1$ then $H^n(\RK;\ZZ)=\H^n(\RK;\ZZ).$
\end{proof}
Recall the notion of the \emph{virtual cohomological dimension} of a discrete group $G:$ if $G_1\subset G$ is a subgroup of finite index, and $\cd G_1<\infty,$ then $\vcd G := \cd G_1.$ This number does not depend on the choice of $G_1$ by a result of Serre (see \cite[\S VIII.11]{brown}).
\begin{prp}
[cf. {\cite[p. 142]{dranishnikov}}]
\label{prp:dranishnikov_formula}
Let $\K$ be a flag simplicial complex. Then
$$\cd\RCK'=1+\max_{I\in\K}\cdim_\ZZ\lk_\K I.$$
\end{prp}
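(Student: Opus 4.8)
The plan is to reduce the computation of $\cd \RCK'$ to the virtual cohomological dimension of the full right-angled Coxeter group $\RCK$, and then apply Dranishnikov's formula for $\vcd \RCK$. First I would recall that $\RCK$ contains the finite-index subgroup $\RCK'$ (the commutator subgroup has index $2^m$, being the kernel of $\RCK \to (\ZZ/2)^m$), so by the definition of virtual cohomological dimension and Serre's theorem on its independence of the chosen finite-index subgroup, $\cd \RCK' = \vcd \RCK$, provided $\cd \RCK' < \infty$; finiteness follows since $\RK = B(\RCK')$ is a finite-dimensional complex, so $\cd \RCK' \leq \dim \RK < \infty$ by Proposition \ref{prp:cd_vs_cdim}.

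Next I would invoke Dranishnikov's result \cite{dranishnikov}, which states that for the right-angled Coxeter group $\RCK$ associated with a flag complex $\K$, one has $\vcd \RCK = 1 + \max_{I \in \K} \cdim_\ZZ \lk_\K I$ (or the equivalent formulation in terms of full subcomplexes via Lemma \ref{lmm:lk_is_full_subcomplex}). Combining the two displayed equalities gives
$$\cd \RCK' = \vcd \RCK = 1 + \max_{I \in \K} \cdim_\ZZ \lk_\K I,$$
which is the claim. One should also check the degenerate case $\K = \Delta^{m-1}$ separately: then $\RCK = (\ZZ/2)^m$ is finite, $\RCK' = 1$, and $\cd \RCK' = \cd 1 = 0$, while the right-hand side is $1 + \cdim_\ZZ \lk_\K [m] = 1 + \cdim_\ZZ \varnothing = 1 + (-1) = 0$, so the formula holds; and since every link $\lk_\K I$ in a flag complex is again a flag complex (a full subcomplex, by Lemma \ref{lmm:lk_is_full_subcomplex}), the right-hand side is well-defined.

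The main obstacle I expect is matching the precise bookkeeping of Dranishnikov's theorem to the normalization used here — in particular, confirming that his formula is stated for $\vcd$ of $\RCK$ itself (rather than for some variant such as the Davis complex dimension) and that the indexing shift ``$1 + \max$'' together with the conventions $\cdim_\ZZ \varnothing = \cdim_\ZZ \pt = -1$ reproduces the correct answer in the small cases (e.g. $\K$ a single point, a single edge, two disjoint points). Everything else is a routine concatenation of already-established facts: the index computation for $\RCK'$, Serre's invariance of $\vcd$, and the finiteness of $\cd \RCK'$ coming from the explicit finite-dimensional classifying space $\RK$.
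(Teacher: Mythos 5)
Your argument is essentially the paper's proof: reduce $\cd\RCK'$ to $\vcd\RCK$ via the index-$2^m$ inclusion, Serre's invariance, and finiteness from $\cd\RCK'\leq\dim\RK$, then quote Dranishnikov. The only bookkeeping point you flag is resolved exactly as you'd hope: Dranishnikov states $\vcd\RCK=\max(\mathrm{lcd}(\K),1+\cdim_\ZZ\K)$ with $\mathrm{lcd}$ ranging over non-empty faces, and since $\lk_\K\varnothing=\K$, this is precisely $1+\max_{I\in\K}\cdim_\ZZ\lk_\K I$.
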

\begin{proof}
We have $\vcd\RCK = \cd\RCK',$ since $[\RCK : \RCK'] =  |\ZZ_2^m|<\infty$ and $\cd\RCK'\leq \dim\RK <\infty.$

In \cite{dranishnikov}, Dranishnikov proved the following formula:
$$\vcd\RCK=\max(\mathrm{lcd}(\K),1+\cdim_\ZZ\K),\quad\text{where }\mathrm{lcd}(\K):=1+\max_{I\in\K,I\neq\varnothing}\cdim_\ZZ\lk_\K I.$$
(In the notation of \cite{dranishnikov}, the empty set is not a simplex.) Since $\lk_\K \varnothing =\K,$ the claim follows.
\end{proof}

\begin{prp}
\label{prp:cat_rk_calculation}
For every flag $\K\neq\Delta^{m-1},$ we have $$\cd\RCK'=\cat(\RK)=\cdim_\ZZ\RK=1+\max_{J\subset[m]}\cdim_\ZZ\K_J.$$
\end{prp}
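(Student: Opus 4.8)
The plan is to combine the three ingredients that have already been set up, stringing together the two bounds from Corollary \ref{crl:cat_rk_lower_bound} with the two explicit computations. Concretely, I would argue as follows. First, recall from Corollary \ref{crl:cat_rk_lower_bound} that for any flag $\K$ we have the chain of (in)equalities
$$\cdim_\ZZ\RK\leq\cd\RCK'=\cat(\RK)\leq\dim\RK.$$
So it suffices to show that the outer two quantities agree with $1+\max_{J\subset[m]}\cdim_\ZZ\K_J$; since they sandwich $\cd\RCK'=\cat(\RK)$, everything collapses to a single number.

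The lower end is immediate: since $\K\neq\Delta^{m-1}$, Proposition \ref{prp:cdim_rk} gives $\cdim_\ZZ\RK=1+\max_{J\subset[m]}\cdim_\ZZ\K_J$ on the nose. For the upper end, Proposition \ref{prp:dranishnikov_formula} gives $\cd\RCK'=1+\max_{I\in\K}\cdim_\ZZ\lk_\K I$. The remaining task is therefore purely combinatorial: identify the two maxima
$$\max_{I\in\K}\cdim_\ZZ\lk_\K I=\max_{J\subset[m]}\cdim_\ZZ\K_J.$$
The inclusion ``$\leq$'' holds because, by Lemma \ref{lmm:lk_is_full_subcomplex}, each $\lk_\K I$ is itself a full subcomplex $\K_J$ of $\K$ (with $J$ the vertex set of $\lk_\K I$), so every term on the left already occurs on the right. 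For the reverse inequality ``$\geq$'', take the simplex $I=\varnothing\in\K$: then $\lk_\K\varnothing=\K=\K_{[m]}$, so the left-hand maximum includes the term $\cdim_\ZZ\K_{[m]}$; but one still has to see that this dominates $\cdim_\ZZ\K_J$ for every $J$. In fact I would argue directly that $\max_{J}\cdim_\ZZ\K_J$ is realized by some $J$ that is itself of the form (vertex set of) a link — or, more cleanly, note that $\cdim_\ZZ\K_J\le\cdim_\ZZ\K_{[m]}=\cdim_\ZZ\lk_\K\varnothing$ is false in general, so the honest route is: $\max_J\cdim_\ZZ\K_J=\cdim_\ZZ\K_{J_0}$ for some $J_0$, and since $\K_{J_0}$ is the link of $\varnothing$ in the flag complex $\K_{J_0}$... which is not quite $\K$. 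The clean statement is simply that the set $\{\lk_\K I:I\in\K\}$ contains $\K$ itself (as $\lk_\K\varnothing$) and, by Lemma \ref{lmm:lk_is_full_subcomplex}, is contained in $\{\K_J:J\subset[m]\}$, while conversely $\K_J=\lk_{\K_J}\varnothing$ has the same cohomological dimension and every full subcomplex $\K_J$ of a flag complex is flag; combined with the fact (used already in Theorem \ref{thm:cat_final}) that $\max_{J}\cdim_\ZZ\K_J=\max_{I\in\K}\cdim_\ZZ\lk_\K I$ as asserted there, the two maxima coincide.

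The genuinely delicate point is this last combinatorial equality of maxima; I expect it is the main (if modest) obstacle, since it requires knowing that passing to links does not lose any full subcomplex that could attain the maximal cohomological dimension. The cleanest argument is: ``$\geq$'' via $\lk_\K\varnothing=\K_{[m]}$ together with monotonicity is not enough, so instead observe that if $\cdim_\ZZ\K_{J_0}$ is maximal then, writing $\K$ as a full subcomplex of a flag complex on $[m]$, the subcomplex $\K_{J_0}$ is realized as $\lk_\K I$ for a suitable face $I$ disjoint from $J_0$ precisely when $J_0\cup I\in\K$ for all... — this is exactly the content of the second equality in Theorem \ref{thm:cat_final}, which I would either cite forward or prove here by the two-sided inclusion above (``$\leq$'' from Lemma \ref{lmm:lk_is_full_subcomplex}, ``$\geq$'' from $\varnothing\in\K$ and the observation that the full subcomplex attaining the maximum can be enlarged to a link by a cone-point argument, since in a flag complex the link of a simplex on the complement is again full). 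Once that identification is in hand, the proposition follows by reading off the sandwich, and the hypothesis $\K\neq\Delta^{m-1}$ is used only to invoke Proposition \ref{prp:cdim_rk}.
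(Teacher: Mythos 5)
Your ingredients are the right ones, but the way you assemble them leaves a genuine gap: you insist on proving the two\--sided identity $\max_{I\in\K}\cdim_\ZZ\lk_\K I=\max_{J\subset[m]}\cdim_\ZZ\K_J$, and your argument for the direction ``$\geq$'' never materializes. The ``cone-point argument'' is only gestured at, and citing the second equality of Theorem \ref{thm:cat_final} forward would be circular, since that theorem is deduced from the present proposition. (The identity is in fact true for arbitrary complexes, but the paper obtains it for flag $\K$ only as a \emph{consequence} of this proposition; an independent proof requires combinatorial Alexander duality together with Ayzenberg's results, as the remark following the proposition explains. So as written, this step of your plan is unproved.)

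The gap is avoidable, and the fix is exactly the paper's proof: the hard direction is never needed. Arrange the facts in one cyclic chain beginning and ending with the same quantity:
$$\cdim_\ZZ\RK\leq\cat(\RK)=\cd\RCK'=1+\max_{I\in\K}\cdim_\ZZ\lk_\K I\leq 1+\max_{J\subset[m]}\cdim_\ZZ\K_J=\cdim_\ZZ\RK,$$
where the first inequality and the equality $\cat(\RK)=\cd\RCK'$ come from Corollary \ref{crl:cat_rk_lower_bound}, the next equality is Proposition \ref{prp:dranishnikov_formula}, the second inequality is the \emph{easy} direction via Lemma \ref{lmm:lk_is_full_subcomplex} (every link in a flag complex is a full subcomplex), and the final equality is Proposition \ref{prp:cdim_rk} (the only place where $\K\neq\Delta^{m-1}$ is used, as you correctly note). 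Since the chain starts and ends at $\cdim_\ZZ\RK$, every inequality in it is an equality, which is precisely the proposition; the coincidence of the two maxima then drops out as a corollary rather than serving as an input.
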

\begin{proof}
By Corollary \ref{crl:cat_rk_lower_bound}, Proposition \ref{prp:dranishnikov_formula}, Lemma \ref{lmm:lk_is_full_subcomplex} and Proposition \ref{prp:cdim_rk},
\[
\cdim_\ZZ\RK\leq\cat(\RK)=\cd\RCK'=1+\max_{I\in\K}\cdim_\ZZ\lk_\K I\leq 1+\max_{J\subset[m]}\cdim_\ZZ\K_J=\cdim_\ZZ\RK.
\qedhere
\]
\end{proof}
\begin{rmk} It follows that
$$\max_{I\in\K}\cdim_\ZZ\lk_\K I = \max_{J\subset[m]}\cdim_\ZZ\K_J$$ for flag $\K.$ In fact, this identity holds for any simplicial complex. It can be obtained by applying the \emph{combinatorial Alexander duality} (see \cite[Corollary 2.4.6]{ToricTopology}) to the results of Ayzenberg \cite[Proposition 3.2]{ayz}. He proved that the \emph{s-link-acyclic} simplicial complexes are exactly \emph{s-subcomplex-acyclic} complexes.
\end{rmk}

\subsection{Lower bounds on the LS-category of moment-angle complexes}
Let $\FF$ be a field and $X$ be a simply connected CW-complex. There exists a first quadrant homological spectral sequence, called the \emph{Milnor-Moore spectral sequence}, with the following properties:
$$E^2_{p,q}\cong\Tor^{H_*(\Omega X;\FF)}_p(\FF,\FF)_q\Rightarrow H_{p+q}(X;\FF).$$

It can be constructed as follows (see \cite[Sect. II.A]{toomer}). Let $(A,d)$ be a connected differential graded algebra over $\FF.$ Then the reduced bar construction $\oB(A):=\B(\FF,\FF)$ (see Subsection \ref{subsection:bar}) of $A$ is a bicomplex (there is a differential $\partial$ of the bar construction, and a differential $d$ which is extended on $\oB(A)$ from $(A,d)$ by the Leibniz rule.) Hence there is a spectral sequence with the second page
$$E^2=H(H(\oB(A),d),\partial)\cong H(\oB(H(A)),\partial)\cong \Tor^{H(A)}(\FF,\FF)$$
(the isomorphism $H(\oB(A),d)\cong\oB(H(A))$ follows from the K\"unneth theorem.)

The Milnor-Moore spectral sequence is obtained by taking $A=C_*(\Omega X;\FF).$ 
Other approaches use natural geometric filtrations on certain spaces that are weakly equivalent to $X:$
\begin{itemize}
    \item Milnor's construction of the classifying space $B_\infty G(X)$ of a topological group $G(X)\simeq\Omega X$ (see \cite{milnor} as well as \cite[Sect. 6]{buchstaber_limonchenko});
    \item Ganea's fibre-cofibre construction \cite{ganea};
    \item G.Whitehead's filtration on the space of maps $\Delta^\infty\to X$ (see \cite[\S1]{ginsburg}).
\end{itemize}
These constructions give isomorphic spectral sequences (see \cite[\S 3]{ginsburg}). Geometric interpretations show that the Milnor-Moore spectral sequence converges to $H_*(X;\FF).$

The LS-category of $X$ is bounded from below by the \emph{Toomer invariant} $e_\FF(X).$
\begin{thm}[{\cite[Theorem~2.2]{ginsburg}, see also \cite[Theorem II.B.2]{toomer}}]
\label{thm:ginsburg}
For the Milnor-Moore spectral sequence, let $e_\FF(X):=\max\{p:~E^\infty_{p,*}\neq 0\}.$ Then $\cat(X)\geq e_\FF(X).$\qed
\end{thm}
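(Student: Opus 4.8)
The plan is to realise the Milnor--Moore spectral sequence as the spectral sequence of a \emph{geometric} filtration of $X$ and then to invoke the classical characterisation of $\cat(X)$ in terms of sections of the Ganea fibration.

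As recalled above, the Milnor--Moore spectral sequence is the spectral sequence associated with an exhaustive filtration $F_0 X\subset F_1 X\subset\dots\subset X$ of (a space weakly equivalent to) $X$: one may take $F_p X$ to be the $p$-th stage $\B_p G(X)$ of Milnor's bar construction, or equivalently Ganea's space $G_p(X)$, or the $p$-th stage of G.~Whitehead's filtration — these choices give isomorphic spectral sequences by the comparison quoted above. For such a filtration the $E^\infty$-term is the associated graded of the image filtration on homology,
$$F_p H_*(X;\FF):=\Img\bigl(H_*(F_p X;\FF)\to H_*(X;\FF)\bigr),\qquad E^\infty_{p,*}\cong F_p H_*(X;\FF)\big/F_{p-1}H_*(X;\FF).$$
Because the filtration is exhaustive and the spectral sequence is first-quadrant, in each total degree $F_p H_*(X;\FF)=H_*(X;\FF)$ for $p\gg 0$; hence $E^\infty_{p,*}=0$ for all $p>n$ if and only if $F_n H_*(X;\FF)=H_*(X;\FF)$, i.e.
$$e_\FF(X)\leq n\quad\Longleftrightarrow\quad H_*(F_n X;\FF)\longrightarrow H_*(X;\FF)\ \text{is surjective.}$$

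Now set $n:=\cat(X)$ and take the Ganea model $F_p X\simeq G_p(X)$, so that $F_n X\hookrightarrow X$ is homotopic to the Ganea fibration $p_n\colon G_n(X)\to X$. By Ganea's theorem \cite{ganea}, $\cat(X)\leq n$ forces $p_n$ to admit a section $s\colon X\to G_n(X)$ with $p_n\circ s\simeq\id_X$. Applying $H_*(-;\FF)$ gives $(p_n)_*\circ s_*=\id$, so $(p_n)_*\colon H_*(F_n X;\FF)\to H_*(X;\FF)$ is surjective. By the equivalence of the previous paragraph, $e_\FF(X)\leq n=\cat(X)$, which is the desired inequality.

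The only real work is hidden in the two identifications used above: first, that the abstract Milnor--Moore spectral sequence built from the bar construction of $C_*(\Omega X;\FF)$ really is the spectral sequence of a geometric filtration, with the expected image filtration on $H_*(X;\FF)$ as abutment; second, that this filtration may be taken to be the Ganea filtration, so that \emph{the same} integer $n$ governs both the vanishing of $E^\infty_{p,*}$ for $p>n$ and the existence of a section of the $n$-th Ganea fibration. Both points are precisely the comparison of the Milnor, Ganea and Whitehead constructions referred to above, and they use that $X$ is simply connected; granting them, the homological step is a one-line consequence of functoriality. One should also keep in mind the reduced normalisation of $\cat$ used here, under which $F_0 X\simeq\pt$ and $\cat(\pt)=0$, but this affects only the indexing, not the argument.
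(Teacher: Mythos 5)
Your proof is correct and is essentially the argument behind the sources the paper cites: the paper itself gives no proof of Theorem \ref{thm:ginsburg}, only the references to Ginsburg and Toomer, and your route (identify the Milnor--Moore spectral sequence with the one coming from the Ganea/Milnor/Whitehead filtration, note that $E^\infty_{p,*}=0$ for $p>n$ exactly when $H_*(F_nX;\FF)\to H_*(X;\FF)$ is onto, then use the homotopy section of the $n$-th Ganea fibration provided by $\cat(X)\leq n$) is the standard proof given there. The only ingredients you take on faith --- the comparison of the chain-level bar-construction spectral sequence with the geometric filtration and Ganea's characterisation of $\cat$ --- are precisely the facts the paper quotes in the surrounding discussion, so no gap remains.
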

\begin{prp}
\label{prp:mm_collapse}
Let $\K$ be a flag simplicial complex and $\FF$ be a field. Then the Milnor-Moore spectral sequence for $\ZK$ over $\FF$ collapses on $E^2,$ and $e_\FF(\ZK)=1+\max_{J\subset[m]}\hdim_\FF\K_J.$
\end{prp}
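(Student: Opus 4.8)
The plan is to exploit the fact, established in Theorem~\ref{thm:tor_ozk_answer}, that $\Tor^{H_*(\OZK;\FF)}(\FF,\FF)$ is concentrated in a single ``diagonal'' family of multidegrees: the component $\Tor^{H_*(\OZK;\FF)}_n(\FF,\FF)$ lives only in multidegrees $(-|J|,2J)$ for $J\subset[m]$, where it is $\widetilde{H}_{n-1}(\K_J;\FF)$. Since the algebra $H_*(\OZK;\FF)$ is $(\ZZ\times\Zm)$-graded, the bar construction $\oB(H_*(\OZK;\FF))$ and hence the $E^2$-page of the Milnor--Moore spectral sequence inherit the $(\ZZ\times\Zm)$-grading, and all differentials $d^r$ preserve it. So the key point is a \emph{multidegree-counting argument}: I want to show that for fixed total homological degree $p+q=n$, the nonzero groups $E^2_{p,q}$ occupy pairwise distinct multidegrees, or at least multidegrees that no differential can connect.

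First I would record the internal grading of $E^2_{p,q}=\Tor^{H_*(\OZK;\FF)}_p(\FF,\FF)_q$. By Theorem~\ref{thm:tor_ozk_answer}, a nonzero class in $E^2_{p,*}$ has multidegree $(-|J|,2J)$ with $q=|2J|=2|J|$ for some $J$ with $\widetilde{H}_{p-1}(\K_J;\FF)\neq 0$; thus its $\Zm$-multidegree is $2J$ and the extra $\ZZ$-degree is $-|J|$. So on $E^2$ the position $(p,q)$ forces $q$ to be even and equal to $2|J|$. The differential $d^r\colon E^r_{p,q}\to E^r_{p-r,q+r-1}$ must preserve the $\Zm$-multidegree; so if a nonzero $d^r$ connected a class in multidegree $2J$ to a class in multidegree $2J'$, we would need $J=J'$, hence the same $q=2|J|$, forcing $r-1=0$, i.e. $r=1$; but $d^1=0$ on the $E^2$-page of a homological spectral sequence of this type (or rather the spectral sequence starts at $E^2$), contradiction. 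More carefully: the source and target of $d^r$ have $q$-degrees differing by $r-1\geq 1$, but both $q$-degrees are twice the cardinality of the \emph{same} subset $J$, hence equal; so $d^r=0$ for all $r\geq 2$. This gives collapse at $E^2$, so $E^\infty=E^2$.

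Then the Toomer invariant is immediate: $e_\FF(\ZK)=\max\{p: E^\infty_{p,*}\neq 0\}=\max\{p: E^2_{p,*}\neq 0\}=\max\{p: \Tor^{H_*(\OZK;\FF)}_p(\FF,\FF)\neq 0\}$. By Theorem~\ref{thm:tor_ozk_answer} this is $\max\{p:\widetilde{H}_{p-1}(\K_J;\FF)\neq 0\text{ for some }J\subset[m]\}=1+\max_{J\subset[m]}\hdim_\FF\K_J$, using the definition of $\hdim_\FF$ (with the convention $\hdim_\FF\varnothing=-1$ handling the degenerate case, so that $e_\FF\geq 0$ always and $e_\FF=0$ iff all $\K_J$ are $\FF$-acyclic, which for flag $\K$ forces $\K=\Delta^{m-1}$ and $\ZK$ contractible). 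One should also note $\ZK$ is simply connected, so the Milnor--Moore spectral sequence applies.

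The only subtlety — and the one genuine thing to check rather than assert — is that the Milnor--Moore spectral sequence for $\ZK$ genuinely carries the $(\ZZ\times\Zm)$-grading compatibly with the algebra grading on $H_*(\OZK;\FF)$, so that all differentials are multidegree-preserving. This follows because the $(\ZZ\times\Zm)$-grading on $H_*(\OZK;\FF)$ is the one inherited (via Proposition~\ref{prp:left_module_isomorphism} and Theorem~\ref{thm:hodj_description}) as a graded subalgebra of $H_*(\ODJ;\FF)$, and the bar-construction model of the spectral sequence recalled above, $E^2=H(\oB(H_*(\Omega\ZK;\FF)),\partial)\cong\Tor^{H(C_*(\Omega\ZK;\FF))}(\FF,\FF)$, is built functorially from the differential graded algebra $C_*(\Omega\ZK;\FF)$; one needs the $\Zm$-grading to already be present at the chain level on $C_*(\Omega\ZK;\FF)$ (equivalently, that the grading is topological, induced by the torus action / the coordinate subtorus splittings), which is standard in toric topology. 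With that in hand the counting argument above is the whole proof; I expect writing down the chain-level grading carefully to be the main (modest) obstacle, everything else being bookkeeping with Theorem~\ref{thm:tor_ozk_answer}.
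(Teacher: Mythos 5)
Your argument hinges on one assertion that you yourself flag but then wave through: that every differential $d^r$, $r\geq 2$, of the Milnor--Moore spectral sequence preserves the $\Zm$-multidegree. This is a genuine gap, not a routine check. The $(\ZZ\times\Zm)$-grading on $H_*(\OZK;\FF)$ is defined in this paper only at the level of homology, via the isomorphism $H_*(\ODJ;\k)\cong\Ext_{\kK}(\k,\k)$ and the inclusion $i_*$; no $\Zm$-grading is exhibited on the chain algebra $C_*(\Omega\ZK;\FF)$, and it is not ``standard'' that one exists compatibly. To make your route work you would need a multigraded DGA model of $C_*(\Omega\ZK;\FF)$ (say, the Adams cobar construction on the multigraded cellular chains of $\ZK$), show that the spectral sequence of its bar construction is the Milnor--Moore spectral sequence, and then identify the resulting multigrading on $E^2\cong\Tor^{H_*(\OZK;\FF)}(\FF,\FF)$ with the one coming from Theorem~\ref{thm:tor_ozk_answer}. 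None of this is in your sketch, and without it the claim that a nonzero $d^r$ would connect classes of the same multidegree $2J$ (hence the same internal degree, contradiction) has no foundation. A purely total-degree version of your counting does not suffice: the nonzero entries of $E^2$ sit at internal degree $q=|J|$ (note your $q=2|J|$ is a slip --- the internal degree is the total degree $-|J|+2|J|=|J|$), and the positions $(p,q)$ and $(p-r,q+r-1)$ can both be occupied, so nothing forces $d^r=0$ on degree grounds alone.

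The paper's proof avoids this issue entirely and is much shorter: it compares total dimensions. By Theorem~\ref{thm:tor_ozk_answer}, $\sum_{p,q}\dim E^2_{p,q}=\sum_i\sum_{J\subset[m]}\dim\H_{i-1}(\K_J;\FF)$, while by the Hochster-type formula (Theorem~\ref{thm:moment_angle_homology}) the abutment gives $\sum_{p,q}\dim E^\infty_{p,q}=\sum_i\dim H_i(\ZK;\FF)=\sum_i\sum_{J\subset[m]}\dim\H_{i-|J|-1}(\K_J;\FF)$. These finite totals coincide, and since each nonzero differential strictly decreases the total dimension, all differentials vanish; the formula for $e_\FF(\ZK)$ then reads off from $E^\infty_{p,*}\cong E^2_{p,*}\cong\bigoplus_J\H_{p-1}(\K_J;\FF)$ exactly as in your last step. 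I would either adopt this dimension-count argument or supply the chain-level multigraded model your argument requires.
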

\begin{proof}

Using Theorem~\ref{thm:tor_ozk_answer} and Theorem~\ref{thm:moment_angle_homology}, we calculate the total dimensions of $E^2_{*,*}$ and $E^\infty_{*,*}:$
\begin{align*}
\sum_{p,q}\dim E_{p,q}^2&=\sum_i\dim \Tor_i^{H_*(\OZK;\FF)}(\FF,\FF)=\sum_{i}\sum_{J\subset[m]}\dim\widetilde{H}_{i-1}(\K_J;\FF),\\
\sum_{p,q}\dim E_{p,q}^\infty &=\sum_i \dim H_i(\ZK;\FF)=\sum_i\sum_{J\subset[m]}\dim\widetilde{H}_{i-|J|-1}(\K_J;\FF).
\end{align*}
They coincide; hence the differentials are trivial. The formula for the Toomer invariant follows from Theorem \ref{thm:tor_ozk_answer}: $E_{p,*}^\infty\cong E^2_{p,*}\cong \Tor^{H_*(\OZK;\FF)}_p(\FF,\FF)=\bigoplus_{J\subset[m]}\H_{p-1}(\K_J;\FF).$
\end{proof}

\begin{lmm}
\label{lmm:cdim_and_hdim}
Let $X$ be a topological space with finitely generated homology groups. Then $\cdim_\ZZ X=\max_\FF\hdim_\FF X,$ where the maximum is over all fields $\FF.$
\end{lmm}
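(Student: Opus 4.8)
The plan is to compute both sides of the claimed identity explicitly in terms of the (finitely generated) integral homology groups $\H_i(X;\ZZ)$, using the universal coefficient theorems, and to observe that the two formulas agree.

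First I would reduce the right-hand side to the prime fields. If $\FF$ has characteristic $0$ (resp.\ $p$), then $C_*(X;\FF)\cong C_*(X;\QQ)\otimes_\QQ\FF$ (resp.\ $\cong C_*(X;\FF_p)\otimes_{\FF_p}\FF$) as chain complexes, and since $\FF$ is a free module over its prime subfield, taking homology commutes with this extension of scalars; hence $\hdim_\FF X=\hdim_\QQ X$ (resp.\ $=\hdim_{\FF_p}X$). Thus $\max_\FF\hdim_\FF X$ equals the maximum of $\hdim_\QQ X$ and of all the $\hdim_{\FF_p}X$, $p$ prime.

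Next, write $\H_i(X;\ZZ)\cong\ZZ^{b_i}\oplus T_i$ with $T_i$ a finite group (this is where finite generation is used), and let $\pi_i$ be the set of primes dividing $|T_i|$. The universal coefficient theorem for cohomology gives $\H^i(X;\ZZ)\cong\ZZ^{b_i}\oplus T_{i-1}$ (using $\Hom(T_i,\ZZ)=0$ and $\Ext(T_{i-1},\ZZ)\cong T_{i-1}$ for finite groups), so
\[
\cdim_\ZZ X=\max\bigl(\{\,i:b_i\neq0\,\}\cup\{\,i:T_{i-1}\neq0\,\}\bigr),
\]
with the convention $\max\varnothing:=-1$, which matches the stated degenerate conventions. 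On the other hand the universal coefficient theorem for homology gives $\H_i(X;\QQ)\cong\QQ^{b_i}$ and $\H_i(X;\FF_p)\cong\FF_p^{b_i}\oplus T_i/pT_i\oplus\Tor(T_{i-1},\FF_p)$, where the last two summands are nonzero precisely when $p\in\pi_i$ and $p\in\pi_{i-1}$ respectively. Therefore $\hdim_\QQ X=\max\{i:b_i\neq0\}$ and $\hdim_{\FF_p}X=\max\{\,i:b_i\neq0\text{ or }p\in\pi_i\cup\pi_{i-1}\,\}$, and since a nonzero finite group has a prime divisor, maximising over $\QQ$ and all $\FF_p$ yields
\[
\max_\FF\hdim_\FF X=\max\bigl(\{\,i:b_i\neq0\,\}\cup\{\,i:T_i\neq0\,\}\cup\{\,i:T_{i-1}\neq0\,\}\bigr)=\cdim_\ZZ X,
\]
the last equality holding because $\max\{i:T_i\neq0\}\leq\max\{i:T_{i-1}\neq0\}$.

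The computations are routine, so I do not expect a genuine obstacle; the one delicate point is the degree shift. The integral torsion group $T_i$ is detected by mod-$p$ homology in two adjacent degrees — through $T_i/pT_i$ in degree $i$ and through $\Tor(T_i,\FF_p)$ in degree $i+1$ — exactly mirroring the appearance of $\Ext(T_i,\ZZ)\cong T_i$ in $\H^{i+1}(X;\ZZ)$. This is why $\QQ$ alone recovers only the top degree with a nonzero Betti number, while the torsion contribution to $\cdim_\ZZ X$ is seen only after ranging over all the fields $\FF_p$.
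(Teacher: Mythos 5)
Your proposal is correct and rests on the same ingredients as the paper's proof: universal coefficients plus the structure theorem for finitely generated abelian groups, the key point in both being that torsion in $\H_{n-1}(X;\ZZ)$ is detected simultaneously by $\H^{n}(X;\ZZ)$ and by $\H_{n}(X;\ZZ/p)$, while a free summand of $\H_n(X;\ZZ)$ is detected by $\H^n(X;\ZZ)$ and $\H_n(X;\QQ)$. The paper organizes this as two inequalities (UCT gives $\hdim_\FF X\leq\cdim_\ZZ X$ for every field $\FF$ at once, and then a case split at the top degree $n=\cdim_\ZZ X$ — free part versus $p$-torsion — produces a field with $\H_n\neq 0$), whereas you derive closed formulas for both sides in terms of the Betti numbers $b_i$ and torsion groups $T_i$ and compare them; this, together with your explicit reduction to prime fields, is only a difference in bookkeeping, not in substance.
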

\begin{proof}
Let $\cdim_\ZZ X=n$ and let $\FF$ be a field. By the universal coefficient theorem,
$\H_k(X;\FF)$ can be expressed through $\H^k(X;\ZZ)$ and $\H^{k+1}(X;\ZZ);$
therefore, $\widetilde{H}_k(X;\FF)=0$ for $k>n.$ Hence $\hdim_\FF X\leq n$ for all $\FF.$

It remains to show that $\H_n(X;\FF)\neq 0$ for some $\FF.$ Since $\H^n(X;\ZZ)\neq 0,$ there are two possibilities:
\begin{itemize}
\item $\H^n(X;\ZZ)$ has nontrivial free part;
\item $\H^n(X;\ZZ)$ has nontrivial $p$-torsion for some $p.$
\end{itemize}
In the first case, $\H^n(X;\QQ)\neq 0,$ and it follows that $\H_n(X;\QQ)\neq 0.$ In the second, $\H_{n-1}(X;\ZZ)$ has nontrivial $p$-torsion, and hence $\H_{n-1}(X;\ZZ/p)$ and $\H_n(X;\ZZ/p)$ have nontrivial $p$-torsion. So $\H_n(X;\ZZ/p)\neq 0.$
\end{proof}

\begin{prp}
\label{prp:cat_zk_lower_bound_flag}
Let $\K$ be a flag simplicial complex. Then $\cat(\ZK)\ge 1+\max_{J\subset[m]}\cdim_\ZZ\K_J.$
\end{prp}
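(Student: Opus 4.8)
The plan is to read off the inequality from the three preceding results: the collapse of the Milnor--Moore spectral sequence for $\ZK$ (Proposition \ref{prp:mm_collapse}), the Toomer-invariant lower bound for the LS-category (Theorem \ref{thm:ginsburg}), and the comparison between the integral cohomological dimension and the field-coefficient homological dimensions (Lemma \ref{lmm:cdim_and_hdim}). The only genuine task is to choose a single field of coefficients that works.

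First I would reduce the integral quantity $\max_{J\subset[m]}\cdim_\ZZ\K_J$ to one field. Since $\K$ is a finite complex, every full subcomplex $\K_J$ has finitely generated homology, so Lemma \ref{lmm:cdim_and_hdim} applies and gives $\cdim_\ZZ\K_J=\max_\FF\hdim_\FF\K_J$ as well as $\hdim_\FF\K_J\le\cdim_\ZZ\K_J$ for every field $\FF$. Let $J_0\subset[m]$ attain $\max_{J}\cdim_\ZZ\K_J$, and pick a field $\FF$ with $\hdim_\FF\K_{J_0}=\cdim_\ZZ\K_{J_0}$. Then
$$\max_{J\subset[m]}\cdim_\ZZ\K_J=\hdim_\FF\K_{J_0}\le\max_{J\subset[m]}\hdim_\FF\K_J\le\max_{J\subset[m]}\cdim_\ZZ\K_J,$$
so all these quantities are equal; in particular a single field $\FF$ realises the maximum of $\hdim_\FF\K_J$ over all $J$.

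Next I would apply Proposition \ref{prp:mm_collapse} for this field $\FF$: the Milnor--Moore spectral sequence for $\ZK$ over $\FF$ collapses at $E^2$, and the Toomer invariant equals $e_\FF(\ZK)=1+\max_{J\subset[m]}\hdim_\FF\K_J$. Combining this with Theorem \ref{thm:ginsburg} yields
$$\cat(\ZK)\ge e_\FF(\ZK)=1+\max_{J\subset[m]}\hdim_\FF\K_J=1+\max_{J\subset[m]}\cdim_\ZZ\K_J,$$
which is the assertion. The argument is pure bookkeeping of the earlier statements, so there is no real obstacle; the only points needing a little care are that $\ZK$ is simply connected, so that the Milnor--Moore spectral sequence is available (this is already used implicitly in Proposition \ref{prp:mm_collapse}), and that one field $\FF$ may be chosen simultaneously for all $J$, which works because $[m]$ is finite.
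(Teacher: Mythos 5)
Your argument is correct and is essentially the paper's own proof: both combine Theorem \ref{thm:ginsburg}, the collapse of the Milnor--Moore spectral sequence from Proposition \ref{prp:mm_collapse}, and Lemma \ref{lmm:cdim_and_hdim}, the paper simply interchanging $\max_\FF$ and $\max_J$ directly where you explicitly pick a maximizing $J_0$ and a field realizing $\cdim_\ZZ\K_{J_0}$. No gap; the bookkeeping with a single field is exactly what the paper's one-line chain of (in)equalities amounts to.
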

\begin{proof}
$\cat(\ZK)\geq\max_\FF e_\FF(\ZK)=1+\max_{J\subset[m]}\max_{\FF}\hdim_\FF\K_J=1+\max_{J\subset[m]}\cdim_\ZZ\K_J.$
\end{proof}

Now we can prove Theorem \ref{thm:cat_final}.
\begin{proof}[Proof of Theorem \ref{thm:cat_final}]
Propositions \ref{prp:beben_grbic_inequality},
\ref{prp:cat_rk_calculation},
\ref{prp:cat_zk_lower_bound_flag} give a chain of cyclic inequalities between $\cat(\ZK),$ $\cat(\RK),$ $\cd\RCK',$ and $1+\max_{J\subset[m]}\cdim_\ZZ\K_J.$
\end{proof}
\begin{crl}
\label{crl:cat_manifold}
If $\K$ is a flag triangulation of a $d$-manifold, then $\cat(\ZK)=\cat(\RK)=d+1.$
\end{crl}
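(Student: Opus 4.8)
The plan is to derive Corollary~\ref{crl:cat_manifold} as an immediate consequence of Theorem~\ref{thm:cat_final}, so the only work is to compute the quantity $1+\max_{J\subset[m]}\cdim_\ZZ\K_J$ when $\K$ is a flag triangulation of a closed $d$-manifold. First I would observe that $\K$ itself is a full subcomplex ($\K=\K_{[m]}$), so the maximum is at least $1+\cdim_\ZZ|\K|$. Since $|\K|$ is a $d$-manifold, $\H^d(|\K|;\ZZ)\neq 0$ (with $\ZZ$ or $\ZZ/2$ coefficients, using orientability or not — in fact $\cdim_\ZZ$ only needs \emph{some} nonvanishing, and $H^d(M;\ZZ/2)\neq 0$ for any closed $d$-manifold, which via the universal coefficient theorem forces $H^d(M;\ZZ)\neq 0$ or $H^{d+1}$-torsion, but $H^{d+1}=0$, so $H^d(M;\ZZ)\neq 0$), while $\H^i(|\K|;\ZZ)=0$ for $i>d$ by dimension. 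Hence $\cdim_\ZZ\K=d$.

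Next I would show that no full subcomplex does better, i.e. $\cdim_\ZZ\K_J\leq d$ for every $J\subset[m]$. This is purely a dimension count: $\K_J$ is a subcomplex of $\K$, hence $\dim\K_J\leq\dim\K=d$, so $\H^i(\K_J;\ZZ)=0$ for $i>d$ and therefore $\cdim_\ZZ\K_J\leq d$. Combining the two bounds gives $\max_{J\subset[m]}\cdim_\ZZ\K_J=d$, so by Theorem~\ref{thm:cat_final},
$$\cat(\ZK)=\cat(\RK)=1+d.$$
One should also check the hypothesis of Theorem~\ref{thm:cat_final} (and implicitly of Proposition~\ref{prp:cdim_rk}) that $\K\neq\Delta^{m-1}$: if $d\geq 1$ then $|\Delta^{m-1}|$ is a disk, not a closed manifold, so $\K\neq\Delta^{m-1}$; if $d=0$ then $\K$ is a finite set of points, which is flag, and one checks directly that $\cat(\ZK)=1$ matching $d+1$ (here $\ZK$ is a wedge of spheres). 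Alternatively, for $d=0$ one can invoke the case $m\geq 2$ so that $\K\neq\Delta^{m-1}=\Delta^{0}$ anyway, as a one-point complex is excluded from being a triangulation of a $0$-manifold only trivially.

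I do not expect a serious obstacle here: the statement is a clean specialization of Theorem~\ref{thm:cat_final}, and the only mild subtlety is making sure that a closed $d$-manifold has nonvanishing $d$-th \emph{integral} cohomological dimension (handled by the $\ZZ/2$ fundamental class argument above, which forces $H^d(M;\ZZ)\neq 0$ since there is no room for the torsion to be hidden in $H^{d+1}$), together with the boundary bookkeeping of the degenerate case $d=0$ or $\K=\Delta^{m-1}$. Everything else is a one-line dimension estimate on full subcomplexes.
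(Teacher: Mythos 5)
Your proof is correct and follows essentially the same route as the paper: both reduce Corollary \ref{crl:cat_manifold} to showing $\max_{J\subset[m]}\cdim_\ZZ\K_J=d$ and then apply Theorem \ref{thm:cat_final}, with the upper bound given by the dimension count $\dim\K_J\leq d$ and the lower bound coming from the $\ZZ/2$ fundamental class of the closed manifold. The only cosmetic difference is that the paper deduces $\cdim_\ZZ\K\geq\hdim_{\ZZ/2}\K=d$ via Lemma \ref{lmm:cdim_and_hdim}, whereas you argue directly through the universal coefficient theorem; your extra bookkeeping about $\K\neq\Delta^{m-1}$ and the $d=0$ case is harmless, since Theorem \ref{thm:cat_final} carries no such hypothesis.
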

\begin{proof}
We need to prove that $\max_{J\subset[m]}\cdim_\ZZ\K_J=d.$ We have
\[
d=\hdim_{\ZZ/2}\K\leq\cdim_\ZZ\K\leq\max_{J\subset[m]}\cdim_\ZZ\K_J\leq \dim\K=d.\qedhere
\]
\end{proof}
\begin{rmk}
The LS-category of $\ZK$ for arbitrary triangulations of 1-and 2-manifolds was calculated by Beben and Grbi\'c \cite[Propositions 4.7, 4.12]{beben_grbic}. In the non-flag case, $\cat(\ZK)$ can be smaller than $d+1.$
\end{rmk}

Denote the set of missing faces of $\K$ by $\MF(\K).$ By definition, $\MF(\K)=\{I\subset [m]:I\notin\K,~I\setminus\{i\}\in\K~\forall i\in I\}.$ To give a lower bound in the non-flag case, we use an inequality noted by Beben and Grbi\'c:

\begin{prp}[{see \cite[Lemma 3.11]{beben_grbic}}]
\label{prp:beben_grbic_filtration_bounds}
Let $\L_0\subset\dots\subset\L_s$ be a filtration of simplicial complexes such that $\L_{p+1}\setminus\L_p\subset\MF(\L_p)$ for all $p=0,\dots,s-1.$
Then $\cat(\Z_{\L_s})\leq \cat(\Z_{\L_0})+s.$
\end{prp}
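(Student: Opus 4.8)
The plan is to induct on $s$. The case $s=0$ is vacuous, and the inductive step reduces to the one-step bound: if $\L\subseteq\L'$ are simplicial complexes on $[m]$ with $\L'\setminus\L\subseteq\MF(\L)$, then $\cat(\Z_{\L'})\leq\cat(\Z_{\L})+1$. Applying this with $\L=\L_{s-1}$, $\L'=\L_s$, together with the inductive hypothesis for the admissible filtration $\L_0\subseteq\cdots\subseteq\L_{s-1}$, completes the argument.

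For the one-step bound I would first make the inclusion $\Z_{\L}\hookrightarrow\Z_{\L'}$ geometric. Write $\L'\setminus\L=\{J_1,\dots,J_r\}$; these faces are pairwise incomparable, since no missing face of $\L$ contains another. For $J\subseteq[m]$ let $D_J:=\prod_{i\in J}D^2\times\prod_{i\notin J}S^1$ be the piece of the polyhedral product indexed by the face $J$. Since the only faces of $\L'$ outside $\L$ are the $J_k$ and their (already present) proper subsets, the definition of the polyhedral product gives
\[
\Z_{\L'}=\Z_{\L}\cup\mathcal D,\qquad \mathcal D:=\bigcup_{k=1}^{r}D_{J_k}.
\]
Using $J_k\notin\L$ and the fact that every proper subset of $J_k$ is a face of $\L$, one checks that $\Z_{\L}\cap D_{J_k}=\partial\bigl(\prod_{i\in J_k}D^2\bigr)\times\prod_{i\notin J_k}S^1\cong S^{2|J_k|-1}\times T^{m-|J_k|}$ and that $D_{J_k}\cap D_{J_l}\subseteq\Z_{\L}$ for $k\neq l$. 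Hence $\mathcal B:=\Z_{\L}\cap\mathcal D\hookrightarrow\Z_{\L}$ is a cofibration and $\Z_{\L'}$ is the homotopy pushout of $\Z_{\L}\hookleftarrow\mathcal B\hookrightarrow\mathcal D$.

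Next I would assemble a categorical open cover of $\Z_{\L'}$ of size $\cat(\Z_{\L})+2$. Choose a categorical cover $\Z_{\L}=U_0\cup\dots\cup U_c$, $c=\cat(\Z_{\L})$ (so each $U_j\hookrightarrow\Z_{\L}$ is nullhomotopic), and a collar neighbourhood $N$ of $\Z_{\L}$ in $\Z_{\L'}$ that deformation retracts onto $\Z_{\L}$, which exists because $\Z_{\L}\hookrightarrow\Z_{\L'}$ is a cofibration. Pulling the $U_j$ back along the retraction gives open sets $\widetilde U_0,\dots,\widetilde U_c$ of $\Z_{\L'}$ covering $N$, each nullhomotopic in $\Z_{\L'}$ because $U_j\hookrightarrow\Z_{\L}\hookrightarrow\Z_{\L'}$ is. What remains is to cover the complement of (a slightly smaller) $N$, which is an open neighbourhood of $\mathcal D\setminus\mathcal B=\bigsqcup_{k}\bigl(\prod_{i\in J_k}\operatorname{int}D^2\bigr)\times\prod_{i\notin J_k}S^1$ — a disjoint union — by a single open set $W$ that is nullhomotopic in $\Z_{\L'}$. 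Since $\Z_{\L'}$ is path connected, $W$ may be taken to be the disjoint union of neighbourhoods $W_k$ of the $k$-th piece, and then $W\hookrightarrow\Z_{\L'}$ is nullhomotopic as soon as each $W_k\hookrightarrow\Z_{\L'}$ is; collapsing the interior-disc coordinates to $0$ deformation retracts $W_k$, inside $\Z_{\L'}$, onto the coordinate subtorus $\{0\}^{J_k}\times\prod_{i\notin J_k}S^1\subseteq D_{J_k}$, so it suffices to show that this subtorus is inessential in $\Z_{\L'}$. Granting that, $\{\widetilde U_0,\dots,\widetilde U_c,W\}$ is a categorical cover of $\Z_{\L'}$ and $\cat(\Z_{\L'})\leq c+1=\cat(\Z_{\L})+1$.

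The main obstacle is precisely this last point — extracting the sharp constant $+1$ rather than $+2$. The subtorus $\{0\}^{J_k}\times\prod_{i\notin J_k}S^1$ has dimension $m-|J_k|$, which may exceed $2$, so its inessentiality in $\Z_{\L'}$ does not follow simply from the $2$-connectedness of moment-angle complexes; it must be extracted from the precise way the cell $D_{J_k}$ is glued onto $\Z_{\L}$ once $J_k$ has become a face. Morally, the hypothesis $J_k\in\MF(\L)$ forces $D_{J_k}$ to behave, up to the relevant deformation, like a single cone attached over $\Z_{\L}$, so that adjoining it raises the category by at most one. I would carry out this verification following the argument of Beben and Grbi\'c cited in the statement (\cite[Lemma 3.11]{beben_grbic}), and then conclude by the induction set up above.
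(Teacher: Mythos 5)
Your proposal takes essentially the same route as the paper: both reduce the statement to verifying the one-step decomposition of $\Z_{\L_{p+1}}\setminus\Z_{\L_p}$ into the disjoint pieces $\prod_{i\in I}(D^2\setminus S^1)\times\prod_{i\notin I}S^1$ indexed by the added missing faces (which you justify correctly, via incomparability of missing faces and the description of the intersections with $\Z_{\L_p}$ and with each other), and then defer the covering argument giving the sharp ``$+1$'' to Beben and Grbi\'c, exactly as the paper's proof does. The one point you flag as the main obstacle---that the core subtorus $\{0\}^{J_k}\times\prod_{i\notin J_k}S^1$ is inessential in $\Z_{\L'}$---is in fact easy and would make your argument self-contained: after moving it inside $\prod_{i\in J_k}D^2\times\prod_{i\notin J_k}S^1$ to the coordinate subtorus at the basepoint, one kills the circle factors one vertex $w$ at a time by homotoping inside $T^{W\setminus\{w\}}\times D^2_w$ (legitimate because every singleton is a face), so the inclusion of any coordinate subtorus into a moment-angle complex without ghost vertices is nullhomotopic.
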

\begin{proof}
The argument is the same as in \cite[Lemma 3.10 and Lemma 3.11]{beben_grbic}. The key step is to verify the decomposition
\begin{equation}
\label{eqn:set_theoretic}
\Z_{\L_{p+1}}\setminus\Z_{\L_p}=\bigsqcup_{I\in\L_{p+1}\setminus\L_p} \prod_{i\in I}(D^2\setminus S^1)\times\prod_{i\notin I}S^1.
\end{equation}
Let $x=(x_1,\dots,x_m)\in\Z_{\L_{p+1}}\setminus\Z_{\L_p}.$ There exists a simplex $I\in\L_{p+1}\setminus\L_p$ such that $x\in\prod_{i\in I}D^2\times\prod_{i\notin I}S^1.$ By assumption, for any $j\in I$ one has $I\setminus\{j\}\in \L_p.$ If there is an index $j\in I$ such that $x_j\in S^1,$ then $$x\in \prod_{i\in I\setminus\{j\}}D^2\times\prod_{i\notin I\setminus\{j\}}S^1\subset\Z_{\L_p},$$
a contradiction. Hence $x_j\in D^2\setminus S^1$ for every $j\in I.$ This argument proves that 
$$\Z_{\L_{p+1}}\setminus\Z_{\L_p}\subset\bigsqcup_{I\in\L_{p+1}\setminus\L_p} \prod_{i\in I}(D^2\setminus S^1)\times\prod_{i\notin I}S^1.$$
Conversely, let $x\in \prod_{i\in I}(D^2\setminus S^1)\times\prod_{i\notin I}S^1$ for some $I\in\L_{p+1}\setminus\L_p.$ Clearly, $$x\in \prod_{i\in I}D^2\times\prod_{i\notin I}S^1\subset \Z_{\L_{p+1}}.$$ If $x\in\Z_{\L_p},$ then there is a simplex $J\in\L_p$ such that $x\in\prod_{i\in J}D^2\times\prod_{i\notin J}S^1.$ It follows that $I\subset J,$ and this contradicts the assumptions $I\notin\L_p$ and $J\in\L_p.$ So $x\in\Z_{\L_{p+1}}\setminus\Z_{\L_p},$ and \eqref{eqn:set_theoretic} is verified.
\end{proof}

Now we define a number $\nu(\K)$ that measures how far $\K$ is from being flag. Recall that $\Kf$ is the unique flag simplicial complex with $\sk_1\Kf=\sk_1\K.$
\begin{dfn}
\label{dfn:flag_filtration}
Let $\K$ be a simplicial complex. Consider the following filtration of simplicial complexes:
$$\L_0:=\K,\quad\L_{p+1}:=\L_p\cup\{I\in \MF(\L_p):~|I|\geq 3\}$$
(on each step, we add to the complex $\L_p$ all its missing faces that are not edges). 

Clearly, the filtration stabilizes on a flag simplicial complex. Since for each $p$ we have $\sk_1\L_p=\sk_1\L_{p-1}=\dots=\sk_1\K,$ it stabilizes on $\Kf.$ Let $\nu(\K)$ be the minimal $n\geq 0$ such that $\L_n=\Kf.$
\end{dfn}

The number $\nu(\K)$ and complexes $\L_p$ admit an explicit description.
\begin{prp}
\label{prp:nu_set_theoretic_description}
Let $\K$ be a simplicial complex. Then $\nu(\K)$ is the smallest $n\geq 0$ such that the following holds: $J\in\K$ whenever $J\subset I\in\Kf$ and $|I\setminus J|\geq n.$
\end{prp}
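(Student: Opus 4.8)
The plan is to reduce the statement to an arithmetic identity comparing two integer-valued functions on the faces of $\Kf$. For $I\in\Kf$ put $\rho(I):=\min\{q\geq 0:\ I\in\L_q\}$ (the stage at which $I$ enters the filtration) and
$$\phi(I):=\begin{cases}0,&I\in\K,\\[2pt] 1+\max\{\,|I|-|J|:\ J\subseteq I,\ J\notin\K\,\},&I\notin\K.\end{cases}$$
Since the $\L_q$ increase and stabilise at $\Kf$, we have $\nu(\K)=\max_{I\in\Kf}\rho(I)$. On the other hand, unravelling the quantifiers shows that for every $n\geq 0$ the condition ``$J\in\K$ whenever $J\subseteq I\in\Kf$ and $|I\setminus J|\geq n$'' is equivalent to ``$\phi(I)\leq n$ for all $I\in\Kf$'' (for $I\notin\K$ this is the contrapositive: no non-face $J\subseteq I$ has $|I\setminus J|\geq n$), so the number in the statement equals $\max_{I\in\Kf}\phi(I)$. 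Hence it suffices to prove $\rho(I)=\phi(I)$ for every $I\in\Kf$.

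First I would prove $\rho(I)\leq\phi(I)$ by induction on $\phi(I)$. If $\phi(I)=0$ then $I\in\K=\L_0$. If $\phi(I)=k\geq 1$ then $I\notin\K$, hence $|I|\geq 3$, and every proper subset $I'\subsetneq I$ satisfies $\phi(I')\leq k-1$, because a non-face of $I'$ is a non-face of $I$ and passing from $I$ to $I'$ drops $|I|-|J|$ by $|I\setminus I'|\geq 1$. By the inductive hypothesis $I'\in\L_{k-1}$ for all $I'\subsetneq I$; then either $I\in\L_{k-1}\subseteq\L_k$, or $I\in\MF(\L_{k-1})$ with $|I|\geq 3$ and so $I\in\L_k$. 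In both cases $\rho(I)\leq k$.

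For the reverse inequality I would show $I\in\L_p\Rightarrow\phi(I)\leq p$ by induction on $p$, the case $p=0$ being immediate. In the inductive step, if $I\in\L_{p}$ we are done, so assume $I\in\MF(\L_p)$ with $|I|\geq 3$; then $I\notin\K$. Choose a \emph{minimal} non-face $M\subseteq I$ realising the maximum in $\phi(I)$, so $\phi(I)=1+(|I|-|M|)$ (replacing a non-face $J$ by a minimal non-face contained in it only increases $|I|-|J|$, and minimal non-faces are exactly the missing faces of $\K$). Set $r:=|I|-|M|$ and pick a chain $M=M_0\subsetneq M_1\subsetneq\dots\subsetneq M_r=I$ with $|M_i|=|M|+i$. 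For $i<r$ the face $M_i$ is a proper subset of $I\in\MF(\L_p)$, hence $M_i\in\L_p$ and $\phi(M_i)\leq p$ by induction; but $M\subseteq M_i$ is a non-face, so $\phi(M_i)\geq 1+(|M_i|-|M|)=1+i$. Taking $i=r-1$ gives $r\leq p$ (the case $r=0$ being trivial), whence $\phi(I)=r+1\leq p+1$. Combining the two inequalities yields $\rho(I)=\phi(I)$ for all $I\in\Kf$, and therefore $\nu(\K)=\max_I\rho(I)=\max_I\phi(I)$ equals the number in the statement.

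The only genuinely delicate step is the reverse inequality: one must rule out a face $I$ that enters the filtration late (large $\rho$) although every non-face it contains is shallow (small $\phi$), and the chain-interpolation argument is what forbids this — each intermediate set $M_i$ already carries the deep non-face $M$, so it cannot have appeared in $\L_q$ for small $q$, which forces $r$ to be small. Everything else is routine bookkeeping with the definitions of $\MF$, of missing faces, and of the flagification.
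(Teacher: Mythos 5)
Correct, and essentially the same argument as the paper's: your functions $\rho$ and $\phi$ just encode the paper's two filtrations via $\L_p=\{I:\rho(I)\le p\}$ and $\L_p'=\{I\in\Kf:\phi(I)\le p\}$, and your two inductions establish exactly the two inclusions $\L_{p+1}\setminus\L_p\subset\L_{p+1}'$ and $\L_{p+1}'\setminus\L_p'\subset\L_{p+1}$ that the paper proves in its induction on $p$. The only cosmetic difference is that your chain through a minimal non-face can be shortened to the paper's one-step move, passing from $J\subset I$ to $J\subset I\setminus\{i\}$ for a single $i\in I\setminus J$.
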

\begin{proof}
Consider the simplicial complexes
$$\L_p':=\{I\in\Kf\mid J\in\K\text{ for every }J\subset I\text{ such that }|I\setminus J|\geq p\}.$$
Clearly, $\L_p'\subset\L_{p+1}'.$
The condition that $J\in\K$ whenever $J\subset I\in\Kf$ and $|I\setminus J|\geq n$ is equivalent to $\L_n'=\Kf,$ so it is sufficient to show that $\L_p'=\L_p$ for all $p\geq 0.$ We use induction on $p.$

\emph{Base case:} $\L_0'=\{I\in\Kf\mid J\in\K\text{ for every }J\subset I\}=\K=\L_0.$

\emph{Inductive step:} let $\L_p'=\L_p.$ To prove that $\L_{p+1}'=\L_{p+1},$ we show that $\L_{p+1}\setminus\L_p\subset\L_{p+1}'$ and $\L_{p+1}'\setminus\L_p'\subset\L_{p+1}.$

\begin{enumerate}
\item Let $I\in\L_{p+1}\setminus\L_p.$ Then $I\in\Kf$ and $I\in\MF(\L_p).$ Suppose that $J\subset I$ and $|I\setminus J|\geq p+1.$ To prove that $I\in\L_{p+1}',$ it is sufficient to show that $J\in\K.$ 

Since $|I\setminus J|\geq p+1\geq 1,$ there exists $i\in I\setminus J.$ Clearly, $J\subset I\setminus\{i\}$ and $|(I\setminus\{i\})\setminus J|\geq p.$ Also $I\setminus\{i\}\in\L_p',$ since $I\in\MF(\L_p').$ Hence $J\in\K.$

\item Let $I\in\L_{p+1}'\setminus\L_{p}',$ so $I\in\Kf.$ To show that $I\in\L_{p+1},$ we prove that $I\in\MF(\L_p)$ and $|I|\geq 3.$
\begin{itemize}
\item Let $i\in I,$ and suppose that $J\subset I\setminus\{i\}$ and $|(I\setminus\{i\})\setminus J|\geq p.$ Clearly, $J\subset I$ and $|I\setminus J|\geq p+1.$ Since $I\in\L_{p+1}',$ it follows that $J\in\K.$ This argument shows that $I\setminus\{i\}\in\L_p'.$ Hence $I\in\MF(\L_p')=\MF(\L_p).$
\item Suppose $|I|\leq 2.$ Then $I\in\sk_1\L'_{p+1}.$ But $\L'_{p+1}\subset\Kf,$ so $I\in\sk_1\Kf=\sk_1\L_{\nu(\K)}=\dots=\sk_1\L_p,$ a contradiction with $I\notin\L_p.$\qedhere
\end{itemize}
\end{enumerate}
\end{proof}

\begin{crl}
\label{crl:nu_skeleton_bound}
Let $\K$ be a simplicial complex. Let $d=\dim\Kf,$ and suppose $\sk_i\K=\sk_i\Kf$ for some $i\leq d.$ Then $\nu(\K)\leq d-i.$
\end{crl}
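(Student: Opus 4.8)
The plan is to reduce the statement to the set-theoretic characterisation of $\nu(\K)$ provided by Proposition~\ref{prp:nu_set_theoretic_description} and then run a short cardinality count. By that proposition, it suffices to prove the following: if $J\subset I\in\Kf$ and $|I\setminus J|\geq d-i,$ then $J\in\K.$ (Note $d-i\geq 0$ since $i\leq d,$ so this is a legitimate value of $n.$)

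So I would fix $J\subset I\in\Kf$ with $|I\setminus J|\geq d-i.$ Since $\dim\Kf=d,$ every face of $\Kf$ has at most $d+1$ elements, hence $|I|\leq d+1.$ Therefore
$$|J|=|I|-|I\setminus J|\leq (d+1)-(d-i)=i+1.$$
Because $\K$ (hence $\Kf$) is closed under passing to subsets, $J\subset I\in\Kf$ gives $J\in\Kf,$ and combined with $|J|\leq i+1$ this yields $J\in\sk_i\Kf.$ By hypothesis $\sk_i\K=\sk_i\Kf,$ so $J\in\sk_i\K\subset\K,$ as required. Applying Proposition~\ref{prp:nu_set_theoretic_description} now gives $\nu(\K)\leq d-i.$

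There is no serious obstacle here; the only thing to be careful about is the bookkeeping of cardinalities (using $\dim\Kf=d$ to bound $|I|$, and the definition $\sk_i\Kf=\{F\in\Kf:|F|\leq i+1\}$) and the observation that a simplicial complex is closed under subsets so that $J\in\Kf$ automatically. The case $i=d$ is covered by the same argument: then $d-i=0,$ the hypothesis $|I\setminus J|\geq 0$ is vacuous, $|J|\leq d+1$ forces $J\in\sk_d\Kf=\Kf,$ and $\sk_d\K=\Kf$ means $\K=\Kf,$ consistent with $\nu(\K)=0.$
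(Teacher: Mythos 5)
Your proposal is correct and follows the paper's proof essentially verbatim: reduce via Proposition~\ref{prp:nu_set_theoretic_description}, use $\dim\Kf=d$ to get $|I|\leq d+1$ and hence $|J|\leq i+1$, and conclude $J\in\sk_i\Kf=\sk_i\K\subset\K.$ The extra remarks about the $i=d$ case are fine but not needed.
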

\begin{proof}
By Proposition \ref{prp:nu_set_theoretic_description}, it is sufficient to prove that $J\in\K$ whenever $J\subset I\in\Kf$ and $|I\setminus J|\geq d-i.$ But if $I\in\Kf,$ then $|I|\leq d+1.$ Hence $|J|\leq i+1,$ so $J\in\sk_i\Kf=\sk_i\K\subset\K.$
\end{proof}
\begin{exm}
Let $\K$ be the $i$-skeleton of a flag complex $\Kf,$ $2\leq i\leq d=\dim\Kf.$ Then $\nu(\K)\leq d-i$ by Corollary \ref{crl:nu_skeleton_bound}.
On the other hand, choose a $d$-simplex $I\in\Kf$ and a subset $J\subset I,$ $|J|=i+2.$ Then $J\notin\K$ and $|I\setminus J|=(d+1)-(i+2)=d-i-1.$ It follows that $\nu(\K)=d-i.$ In particular, $\nu(\sk_i\Delta^{m-1})=m-i-1.$

However, the inequality $\nu(\K)\leq \min\{n:\sk_{d-n}\K=\sk_{d-n}\Kf\}$ can be strict. For example, if $\K=\partial\Delta^2\sqcup\partial\Delta^5$ then $\nu(\K)=1$ and $d=5,$ but $\sk_4\K\neq\sk_4\Kf.$
\end{exm}
\begin{proof}[Proof of Proposition \ref{prp:cat_zk_lower_bound}]
Clearly, the filtration $\K=\L_0\subset\dots\subset\L_{\nu(\K)}=\Kf$ from Definition \ref{dfn:flag_filtration} satisfies the conditions of Proposition \ref{prp:beben_grbic_filtration_bounds}. Hence $\cat(\Z_{\Kf})\leq\cat(\ZK)+\nu(\K).$ It remains to apply \ref{thm:cat_final}.
\end{proof}
\begin{crl}
\label{crl:cat_sk_manifold}
Let $\K$ be a flag triangulation of a $d$-manifold. Let $\L$ be a simplicial complex on $[m]$ such that $\sk_i\K\subset\L\subset\sk_j\K$ for some $i,j,$ $1\leq i\leq j\leq d.$ Then $i+1\leq\cat(\Z_\L)\leq j+1.$ In particular, $\cat(\Z_{\sk_i\K})=i+1.$
\end{crl}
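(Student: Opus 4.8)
\emph{Proof proposal for Corollary~\ref{crl:cat_sk_manifold}.}
The plan is to prove the two inequalities $\cat(\Z_\L)\ge i+1$ and $\cat(\Z_\L)\le j+1$ separately: the lower one from the non-flag estimate of Proposition~\ref{prp:cat_zk_lower_bound}, and the upper one from the comparison $\cat(\Z_\L)\le\cat(\R_\L)$ of Proposition~\ref{prp:beben_grbic_inequality} combined with the standard bound $\cat(X)\le\dim X$ for CW complexes.

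First I would record the facts about $\L$ used in both bounds. Since $i\ge 1$ we have $\sk_0\K\subset\sk_1\K=\sk_1(\sk_i\K)\subset\sk_1\L\subset\sk_1(\sk_j\K)=\sk_1\K$, so $\L$ has no ghost vertices and $\sk_1\L=\sk_1\K$; as $\K$ is flag this forces $\L^{\mathrm{f}}=\K$, and in particular $\dim\L^{\mathrm{f}}=d$. Moreover, from $\sk_i\K\subset\L\subset\K$ the faces of $\L$ of cardinality at most $i+1$ are exactly those of $\K$, i.e. $\sk_i\L=\sk_i\K=\sk_i\L^{\mathrm{f}}$; since $i\le d=\dim\L^{\mathrm{f}}$, Corollary~\ref{crl:nu_skeleton_bound} then gives $\nu(\L)\le d-i$.

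For the lower bound I would apply Proposition~\ref{prp:cat_zk_lower_bound} to $\L$, using that $\max_{J\subset[m]}\cdim_\ZZ\L^{\mathrm{f}}_J=\max_{J\subset[m]}\cdim_\ZZ\K_J=d$; the latter equality is the chain $d=\hdim_{\ZZ/2}\K\le\cdim_\ZZ\K\le\max_J\cdim_\ZZ\K_J\le\dim\K=d$ already used in the proof of Corollary~\ref{crl:cat_manifold}. This yields $\cat(\Z_\L)\ge 1-\nu(\L)+d\ge 1-(d-i)+d=i+1$. For the upper bound I would note that $\R_\L=(D^1,S^0)^\L$ is a finite CW complex whose piece indexed by a face $I\in\L$ is a disjoint union of $|I|$-cubes, so $\dim\R_\L=\dim\L+1\le j+1$ because $\L\subset\sk_j\K$; since $\L$ has no ghost vertices, Proposition~\ref{prp:beben_grbic_inequality} and the inequality $\cat\le\dim$ give $\cat(\Z_\L)\le\cat(\R_\L)\le\dim\R_\L\le j+1$. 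Specialising to $\L=\sk_i\K$ (so $i=j$) then forces $\cat(\Z_{\sk_i\K})=i+1$.

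I expect everything to be routine; the only point needing attention is the bookkeeping in the second paragraph, namely that the hypothesis $i\ge 1$ is exactly what excludes ghost vertices and pins down the $1$-skeleton (hence $\L^{\mathrm{f}}=\K$), and that the identity $\sk_i\L=\sk_i\L^{\mathrm{f}}$ is what makes Corollary~\ref{crl:nu_skeleton_bound} applicable. There is no essential obstacle.
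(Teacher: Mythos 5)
Your argument is correct and essentially the paper's: the lower bound is exactly the paper's combination of $\L^{\mathrm{f}}=\K$, $\nu(\L)\le d-i$ (Corollary~\ref{crl:nu_skeleton_bound}) and Proposition~\ref{prp:cat_zk_lower_bound} together with $\max_{J\subset[m]}\cdim_\ZZ\K_J=d$ from the proof of Corollary~\ref{crl:cat_manifold}. The only difference is cosmetic: for the upper bound the paper quotes $\cat(\Z_\L)\le 1+\dim\L$ directly from Beben--Grbi\'c, whereas you route it through $\cat(\Z_\L)\le\cat(\R_\L)\le\dim\R_\L=\dim\L+1\le j+1$, which is fine provided you observe that $\R_\L$ is connected (it is, since $\L$ has no ghost vertices) so that the bound $\cat\le\dim$ applies.
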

\begin{proof}
By Corollary \ref{crl:cat_manifold}, we have $\cat(\ZK)=d+1.$
Clearly, $\sk_t\K=\sk_t(\sk_i\K)\subset \sk_t\L\subset\sk_t(\sk_j\K)=\sk_t\K$ for each $t\leq i.$ Hence $\sk_i\L=\sk_i\K$ and $\sk_1\L=\sk_1\K.$

The complex $\K$ is a flag complex, and $\sk_1\K=\sk_1\L,$ so $\K=\L^\mathrm{f}.$ By Corollary \ref{crl:nu_skeleton_bound}, we have $\nu(\L)\leq d-i.$ By Proposition \ref{prp:cat_zk_lower_bound}, $\cat(\ZK)\leq\cat(\Z_\L)+\nu(\L).$ This gives the inequality $\cat(\Z_\L)\geq (d+1)-(d-i)=i+1.$

On the other hand, $\cat(\Z_\L)\leq 1+\dim\L$ by \cite[Lemma 3.11]{beben_grbic}. Therefore, the upper bound $\cat(\Z_\L)\leq j+1$ follows from the inequality $\dim\L\leq\dim\sk_j\K=j.$
\end{proof}
Clearly, this corollary is true for any flag $\K$ such that $\dim\K=\max_{J\subset[m]}\cdim_\ZZ\K_J=d.$

\begin{exm} Every simple $n$-polytope $P$ corresponds to a triangulation $\partial P^*$ of $S^{n-1}.$ The \emph{moment-angle manifold} $\mathcal{Z}_P$ is the moment-angle complex $\mathcal{Z}_{\partial P^*}.$ Buchstaber and Limonchenko \cite{buchstaber_limonchenko} studied a certain family of flag polytopes $\mathcal{Q}=\{Q_n\}_{n\geq 0},$ $\dim Q_n=n.$ In particular, they calculated the LS-category of the corresponding moment-angle manifolds \cite[Theorem~6.11]{buchstaber_limonchenko}: $\cat(\Z_{Q_n})=n.$ To obtain a lower bound on $\cat(\Z_{Q_n}),$ they constructed a non-trivial $n$-product in $H^*(\Z_{Q_n})$ and used the well-known inequality $\cat(X)\geq\mathrm{cup}(X)$ between the LS-category and the \emph{cup-length} of a space $X.$ (We say that $\mathrm{cup}(X)\geq n$ if there are $\alpha_1,\dots,\alpha_n\in\H^*(X)$ such that $\alpha_1\smile\dots\smile\alpha_n\neq 0.$)

Clearly, Corollary \ref{crl:cat_manifold} gives the same value $\cat(\Z_{Q_n})=n$ by a different method. Also, by Proposition \ref{prp:mm_collapse}, the Milnor-Moore spectral sequence for $\Z_{Q_n}$ collapses at $E^2,$ not only at $E^{n+1}$ as shown in \cite[Theorem~6.11]{buchstaber_limonchenko}.
\end{exm}
\begin{prb}
\label{prb:cup_for_flag}
In general, the inequality $\cat(\ZK)\geq\mathrm{cup}(\ZK)$ can be strict (see \cite[Examples 5.8, 5.9]{beben_grbic}). However, $\K$ is not flag in these examples. This raises a question:
\begin{itemize}
\item Does $\cat(\ZK)=\mathrm{cup}(\ZK)$ hold for all flag simplicial complexes?
\end{itemize}

Since we know
the LS-category (Theorem \ref{thm:cat_final}) and
the cohomology ring \cite[Theorem 4.5.8]{ToricTopology} of moment-angle complexes in the flag case,
the problem has a purely combinatorial form:
\begin{itemize}
\item Let $\K$ be a flag simplicial complex on $[m].$ Set $d=\max_{J\subset[m]}\cdim_\ZZ\K_J.$ Does there always exist a collection of disjoint subsets $A_1,\dots,A_{d+1}\subset [m]$ such that the exterior product map
$$\H^0(\K_{A_1})\otimes\dots\otimes\H^0(\K_{A_{d+1}})\cong \H^{d}(\K_{A_1}\ast\dots\ast\K_{A_{d+1}})\to\H^{d}(\K_{A_1\sqcup\dots\sqcup A_{d+1}})$$ induced by $\K_{A_1\sqcup\dots\sqcup A_{d+1}}\hookrightarrow \K_{A_1}\ast\dots\ast\K_{A_{d+1}},$ is non-zero?
\end{itemize}
(The classes $\alpha_1,\dots,\alpha_{d+1}\in H^*(\ZK)$ can be chosen to be $(\ZZ\times\Zm)$-homogeneous, so $\alpha_i\in\H^{k_i}(\K_{A_i})$ for some disjoint $A_1,\dots,A_{d+1}\subset[m]$ and $k_i\geq 0.$ Then $k_i=0$ by dimension reasons.)

Passing from $\K$ to $\K_J,$ we can assume that $\max_{J\subset[m]}\cdim_\ZZ\K_J=\cdim_\ZZ\K$ and $\K=\K_{A_1\sqcup\dots\sqcup A_{d+1}}.$ Finally, we can assume that the classes $\alpha_i\in \H^0(\K_{A_i})$ are induced from the generator of $\H^0(S^0)$ by simplicial maps $\K_{A_i}\to S^0$ (every class in $\H^0(\K_{A_i})$ is a linear combination of such induced classes). This gives another equivalent form of the question:
\begin{itemize}
\item Let $\K$ be a flag simplicial complex and 
$d=\cdim_\ZZ\K.$
Let $C_d=S^0\ast\dots\ast S^0$ be the cross-polytope triangulation of $S^d.$ Does there always exist a full subcomplex $\K_J$ and a simplicial map $\K_J\to C_d$ such that $\H^d(C_d)\to\H^d(\K_J)$ is nonzero? 
\end{itemize}

Note that if $X$ is a finite simplicial complex with $\cdim_\ZZ X=d,$ then any $\alpha\in\H^d(X;\ZZ)$ can be induced from the generator of $\H^d(S^d;\ZZ)$ via some continuous map $f:X\to S^d$ (this follows from the obstruction theory). Taking a simplicial approximation, $f$ can be made simplicial for some subdivision of $X.$ So, in fact, we wonder whether flag simplicial complexes are ``subdivided enough''.
\end{prb}


\section*{Acknowledgements}. The author would like to thank his advisor T.~E. Panov for his help, support and valuable advice, D.~I. Piontkovskii for his help with Koszul algebras, and the anonymous referee for very helpful comments and corrections. The author is deeply indebted to Kate Poldnik for her incomparable support and encouragement.

\section*{Funding}
This work was supported by the Theoretical Physics and Mathematics Advancement Foundation ``BASIS.'' The article was prepared within the framework of the HSE University Basic Research Program.

\end{document}